\documentclass[12pt]{amsart} 
\usepackage{verbatim, latexsym, amssymb, amsmath,color}
\usepackage{enumitem}
\usepackage{epsfig}

\makeatletter
\setcounter{tocdepth}{3}

\renewcommand{\tocsection}[3]{%
  \indentlabel{\@ifnotempty{#2}{\bfseries\ignorespaces#1 #2\quad}}\bfseries#3}
\renewcommand{\tocsubsection}[3]{%
  \indentlabel{\@ifnotempty{#2}{\ignorespaces#1 #2\quad}}#3}

\newcommand\@dotsep{4.5}
\def\@tocline#1#2#3#4#5#6#7{\relax
  \ifnum #1>\c@tocdepth 
  \else
    \par \addpenalty\@secpenalty\addvspace{#2}%
    \begingroup \hyphenpenalty\@M
    \@ifempty{#4}{%
      \@tempdima\csname r@tocindent\number#1\endcsname\relax
    }{%
      \@tempdima#4\relax
    }%
    \parindent\z@ \leftskip#3\relax \advance\leftskip\@tempdima\relax
    \rightskip\@pnumwidth plus1em \parfillskip-\@pnumwidth
    #5\leavevmode\hskip-\@tempdima{#6}\nobreak
    \leaders\hbox{$\m@th\mkern \@dotsep mu\hbox{.}\mkern \@dotsep mu$}\hfill
    \nobreak
    \hbox to\@pnumwidth{\@tocpagenum{\ifnum#1=1\bfseries\fi#7}}\par
    \nobreak
    \endgroup
  \fi}
\AtBeginDocument{%
\expandafter\renewcommand\csname r@tocindent0\endcsname{0pt}
}
\def\l@subsection{\@tocline{2}{0pt}{2.5pc}{5pc}{}}
\makeatother

\usepackage{geometry}
\geometry{hmargin=3cm,vmargin=2.5cm}

\usepackage{hyperref}

\DeclareMathOperator{\II}{II}

\DeclareMathOperator{\Rm}{Rm}
\DeclareMathOperator{\Ric}{Ric}

\DeclareMathOperator{\Hess}{Hess}

\DeclareMathOperator{\diam}{diam}

\DeclareMathOperator{\Vol}{Vol}

\DeclareMathOperator{\dist}{dist}
\DeclareMathOperator{\injrad}{injrad}
\DeclareMathOperator{\rad}{rad}

\DeclareMathOperator{\minvol}{MinVol}

\DeclareMathOperator{\essminvol}{ess-MinVol}

\DeclareMathOperator{\e}{\textbf{e}}

\DeclareMathOperator{\Aut}{Aut}

\newtheorem{theo}{Theorem}[]
\newtheorem{prop}[theo]{Proposition}
\newtheorem{prop/def}[theo]{Proposition/Definition}
\newtheorem{theo/def}[theo]{Theorem/Definition}
\newtheorem{lemme}[theo]{Lemma}
\newtheorem{definition}{Definition}[section]

\begin{document}
\title[]
{Essential minimal volume of Einstein 4-manifolds}

\author{Antoine Song}
\address{California Institute of Technology\\ 177 Linde Hall, \#1200 E. California Blvd., Pasadena, CA 91125}
\email{aysong@caltech.edu}

\maketitle

\begin{abstract} 
The minimal volume of a  closed manifold $M$ is the infimum of the volume of $(M,g)$ over all metrics $g$ with sectional curvature between $-1$ and $1$. 
We introduce a variant called the essential minimal volume, $\essminvol(M)$, which is the limit, as $\delta>0$ goes to $0$, of the infimum of the volume of the $\delta$-thick part of $(M,g)$ over all metrics $g$ with sectional curvature between $-1$ and $1$. We show that, for some universal constant $C>0$, any closed Einstein 4-manifold $M$ with Euler characteristic $\e(M)$ satisfies
$$C^{-1}\e(M) \leq \essminvol(M) \leq C\e(M).$$
As a corollary, these inequalities are true for the essential minimal volume of  closed complex surfaces of nonnegative Kodaira dimension.
We conjecture that those linear bounds  in fact  hold for the minimal volume.



\end{abstract}


\vspace{1em}

\section*{Introduction}

Perhaps the simplest way of measuring the complexity of a closed $n$-manifold $M$, from a Riemannian point of view, is to consider its \emph{minimal volume}. This invariant, considered by Gromov in \cite{Gromov82}, is defined as
$$\minvol(M):=\inf\{\Vol(M,g); \quad \text{$g$ satisfies }|\sec_g|\leq 1\}.$$
Quite paradoxically, the value of minimal volume is only known for a handful of manifolds, in spite of its simple definition. 
It can be explicitly computed for surfaces \cite{Gromov82}, and also for negatively curved locally symmetric spaces by the beautiful work of Besson-Courtois-Gallot \cite{BCG95,BCG96}.
In general, it seems more fruitful to aim at coarse bounds for the minimal volume of natural families of manifolds. For closed 3-dimensional manifolds, good coarse bounds are already known (this is essentially contained in \cite{Souto01}), so we focus on $4$-dimensional manifolds. To simplify our task, we introduce a more tractable variant of the minimal volume. If $\delta>0$ and $g$ is a metric on $M$, let $M^{(g)}_{>\delta}$ be the subset of $M$ where the injectivity radius of $g$ is larger than $\delta$. We define the \emph{essential minimal volume} of $M$ as
$$\essminvol(M):= \lim_{\delta\to 0} \quad \inf\{\Vol(M^{(g)}_{>\delta},g); \quad  \text{$g$ satisfies }|\sec_g|\leq 1\}.$$


\vspace{0.5em}

\begin{theo}\label{main2}
There is a constant $C>0$ such that the following holds. Let $M$ be any closed $4$-manifold admitting an Einstein metric, or any closed complex surface with nonnegative Kodaira dimension. Then
$$C^{-1}\e(M)\leq \essminvol(M)\leq C \e(M),$$
where $\e(M)$ is the Euler characteristic of $M$.
\end{theo}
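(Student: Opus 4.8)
The two inequalities have quite different proofs, so I would treat them separately. Both rest on the Chern--Gauss--Bonnet formula in dimension four,
$$8\pi^2\,\e(M) = \int_M \Big(|W_g|^2 + \tfrac{1}{24}\Scal_g^2 - \tfrac12 |\mathring{\Ric}_g|^2\Big)\,dV_g ,$$
whose integrand is a universal quadratic expression in the curvature; in particular $|\sec_g|\le 1$ forces a pointwise bound $|\text{integrand}|\le C_0$ for a dimensional constant $C_0$. Note also that under either hypothesis $\e(M)\ge 0$ --- for an Einstein metric this is immediate from the formula since $\mathring{\Ric}_g=0$, and for a complex surface with $\mathrm{kod}\ge 0$ it follows from the Enriques--Kodaira classification --- so the asserted lower bound is meaningful. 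For the \emph{lower bound} I would in fact prove the stronger statement $\essminvol(M)\ge C^{-1}|\e(M)|$ for \emph{every} closed $4$-manifold, the step that does not use the hypotheses. Fix $(M,g)\in\mathcal{M}_{|\sec|\le1}(M)$ and $\delta>0$ and split $M=M^{(g)}_{>\delta}\cup M^{(g)}_{\le\delta}$. Over the thick part the pointwise bound gives $\big|\int_{M^{(g)}_{>\delta}}(\cdots)\big|\le C_0\Vol(M^{(g)}_{>\delta})$. Over the thin part one invokes the Cheeger--Fukaya--Gromov structure theory: $M^{(g)}_{\le\delta}$ carries a nilpotent Killing structure of positive rank, and, as in Cheeger--Gromov's treatment of characteristic numbers of collapsed manifolds, integrating the Gauss--Bonnet form along the orbit directions bounds $\big|\int_{M^{(g)}_{\le\delta}}(\cdots)\big|$ by the volume of a bounded-radius neighbourhood of the thick part, i.e.\ by $C\Vol(M^{(g)}_{>c\delta})$. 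Hence $8\pi^2|\e(M)|\le C\Vol(M^{(g)}_{>c\delta})$; taking the infimum over $g$ and then $\delta\to 0$ gives $8\pi^2|\e(M)|\le C\,\essminvol(M)$ (one may equally work with a sequence realizing $\essminvol(M)$ and the multi-pointed limit $(M_\infty,g_\infty)$).

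\emph{Upper bound, Einstein case.} Let $g$ be an Einstein metric, normalized so $\Ric_g=\Lambda g$ with $\Lambda\in\{-3,0,3\}$. When $\Lambda\neq 0$, using $\Scal_g\equiv 4\Lambda$ and $\int_M|W_g|^2\ge 0$ in Chern--Gauss--Bonnet gives $\Vol(M,g)\le C\,\e(M)$, hence $\int_M|\Rm_g|^2 = 8\pi^2\e(M)+2\Lambda^2\Vol(M,g)\le C\,\e(M)$; when $\Lambda=0$ one has $\int_M|\Rm_g|^2 = 8\pi^2\e(M)$ outright and the scale is free. Now I would run the $\varepsilon$-regularity theorem for Einstein $4$-manifolds together with a good chopping (Anderson, Cheeger--Tian): away from finitely many curvature-concentration points --- at most $\varepsilon_0^{-1}\int_M|\Rm_g|^2\le C\,\e(M)$ of them --- one cuts $M$ into a region $G$ on which $|\Rm_g|$ is bounded by a universal constant, and a region $B$ that is a controlled union of necks and almost-collapsed pieces. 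Rescaling on $G$ to bring $|\sec|$ below $1$ produces a piece of volume $\le C\,\e(M)$, while the complement collapses with bounded curvature and hence contributes nothing to the essential minimal volume. Reassembling these descriptions with the gluing and surgery estimates from the earlier sections exhibits an element of $\overline{\mathcal{M}}^{\text{w}}_{|\sec|\le1}(M)$ of volume $\le C\,\e(M)$, giving $\essminvol(M)\le C\,\e(M)$.

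\emph{Upper bound, complex surfaces.} First reduce to minimal models via the blow-up estimate $\essminvol(M\#\overline{\mathbb{CP}^2})\le\essminvol(M)+C$ (join a near-optimal metric on $M$ to a fixed bounded-curvature metric on $\overline{\mathbb{CP}^2}$ minus a ball through a thin neck), noting $\e(M\#\overline{\mathbb{CP}^2})=\e(M)+1$; so it suffices to bound $\essminvol$ for a minimal surface $M_{\min}$ with $\mathrm{kod}(M_{\min})\ge 0$. If $\mathrm{kod}=2$, contract the $(-2)$-curves to the canonical model, which carries an orbifold Kähler--Einstein metric with $\Ric<0$: run the Einstein argument on the orbifold, treating the finitely many quotient singularities and the contracted curves as bounded-cost local modifications, and use $c_1^2(M_{\min})=2\e(M_{\min})+3\tau(M_{\min})\le 3\e(M_{\min})$ (Bogomolov--Miyaoka--Yau). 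If $\mathrm{kod}=1$, $M_{\min}$ is properly elliptic; collapsing the elliptic fibres shows $\essminvol(M_{\min})$ is concentrated near the singular fibres, whose number is $\le\e(M_{\min})$ by additivity of $\e$ over the fibration, and Kodaira's finite list of singular-fibre types supplies a fixed model for the collapse near each. If $\mathrm{kod}=0$, one of finitely many surfaces occurs: abelian and bielliptic ones are (infra)flat, so $\essminvol=\e=0$; for K3 and Enriques surfaces a Kummer-type construction with the flat torus collapsed and finitely many $A_1$-bubbles kept at unit scale gives $\essminvol\le C$.

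\emph{Main obstacle.} Granting the quoted machinery, the lower bound and the classification reductions are largely bookkeeping; the crux is the Einstein upper bound --- showing that the bubbling and collapsed parts of an Einstein $4$-manifold contribute a negligible amount to the essential minimal volume. This depends on the $\varepsilon$-regularity and good-chopping theory for Einstein $4$-manifolds and on arranging the separation of scales so that the collapsed complement genuinely lies in $\overline{\mathcal{M}}^{\text{w}}_{|\sec|\le1}(M)$; a secondary difficulty is handling the orbifold singularities and contracted curves in the Kähler--Einstein case and the non-compactness of the bubble models.
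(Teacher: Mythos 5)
Your high-level blueprint coincides with the paper's (Chern--Gauss--Bonnet lower bound via collapsing theory; $\varepsilon$-regularity plus chopping plus re-collapsing for the Einstein upper bound; classification reductions to minimal models and orbifold K\"{a}hler--Einstein canonical models for $\mathrm{kod}\ge 0$), and you correctly flag where the difficulty concentrates. However, the Einstein upper bound as sketched has a genuine gap at exactly the place you call the ``main obstacle,'' and the gap is not merely one of detail: the step ``rescaling on $G$ to bring $|\sec|$ below $1$ produces a piece of volume $\le C\e(M)$'' has no mechanism behind it. There is no single rescaling constant, because the curvature concentration scale $r_v(x)$ varies over the thick part; the paper instead uses a \emph{variable} conformal factor $\rho\approx r_v(x)^{-1}$, and the volume bound on the resulting ``thick'' region comes from counting a covering of $M^{[>\epsilon]}$ by $L\lesssim \int_M|\Rm_g|^2$ balls of comparable conformal size (Lemma \ref{recouvre}), not from any global rescaling. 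Proving that such a Besicovitch-type covering lemma holds in the collapsed Einstein setting is itself nontrivial and uses the Cheeger--Naber codimension-4 theorem; a simple count ``$\le\varepsilon_0^{-1}\int|\Rm|^2$ concentration points'' does not give a controlled covering. Moreover, the cores of these balls (the regions $\mathcal{R}_k$) generically still carry unbounded curvature after the conformal change, and the paper replaces the metric there via a diffeomorphism-finiteness argument (Lemma \ref{ri}, again Cheeger--Naber); your sketch has no mechanism for capping these regions with a metric of $|\sec|\le 1$ and bounded volume. Finally, ``the complement collapses with bounded curvature and hence contributes nothing'' is exactly what Theorem \ref{keyprop} (C) and Proposition \ref{quantify} make precise: one needs to further collapse the thin part with a curvature bound depending only on the dimension and the regularity of the metric, independent of the metric itself. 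Without this uniformity, the glued metric cannot be renormalized to $|\sec|\le 1$ without blowing up the volume of the thick part.

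On the complex surface side your outline matches the paper's. One small imprecision: the ALE instantons glued to the orbifold singularities are not ``bounded-cost local modifications'' individually --- each contributes $\e(X_i)$, which can be large, and it is the total $\sum_i \e(X_i)$ that is controlled by $\e(S)$ via Chern--Gauss--Bonnet and the definite $L^2$-curvature of non-flat Ricci-flat ALEs. Also, for $\mathrm{kod}\in\{0,1\}$ the paper simply invokes LeBrun's structural results on elliptic surfaces (which cover K3 and Enriques as well), so the case analysis by Kummer constructions is unnecessary, though not wrong in spirit.
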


Theorem \ref{main2} will be shown in Theorem \ref{mainineq} and Theorem \ref{blip}. 
A key feature of Theorem \ref{main2} is that no assumption is made on the Einstein metric.
The main proof ingredients are the collapsing theory of Cheeger, Fukaya and Gromov  \cite{CheegerGromov86} \cite{CheegerGromov90} \cite{Fukaya87} \cite{Fukaya88} \cite{Fukaya89} \cite{CFG92},  the codimension $4$ theorem  of Cheeger-Naber \cite{CheegerNaber15}, a thick-thin decomposition for Einstein 4-manifolds and a covering argument (Section \ref{section4}).

Previously, in a series of papers \cite{LeBrun96,LeBrun99,LeBrun01}, LeBrun used gauge theoretic methods to compute certain integral minimal volumes for complex surfaces involving the scalar and Ricci curvatures, and also studied some minimal volumes defined with the sectional curvature. Theorem \ref{main2} provides new information for complex surfaces, related to sectional curvature. As for Einstein $4$-manifolds, Anderson \cite{Anderson90}, Bando-Kasue-Nakajima \cite{BKN89}, Anderson-Cheeger \cite{AndersonCheeger91}, and more recently Cheeger-Naber \cite{CheegerNaber15} showed various finiteness results under non-collapsing and diameter bound assumptions. 
In some sense, Theorem \ref{main2} can be viewed as a statement quantifying those earlier results.

For a family of closed manifolds for which the essential minimal volume is uniformly bounded, the number of smooth structures is bounded modulo regions carrying a special topological structure called $F$-structure. This property follows from
compactness results for bounded sectional curvature metrics and collapsing theory.
Hence, Theorem \ref{main2} is relevant to a question of Kotschick \cite{Kotschick98} (see also Ishida-LeBrun \cite{IshidaLeBrun02}) which asks whether or not a given closed manifold only supports finitely many smooth structures with an Einstein metric.

We conjecture that the main theorem is true for the minimal volume (the nontrivial inequality being the upper bound). Theorem \ref{main2} is also related to a potential new topological obstruction for $4$-dimensional Einstein metrics.
For more detailed comments and other questions,  see Subsections \ref{crac} and \ref{croc}.

\vspace{2em}

\subsection*{Acknowledgements} 
I am grateful to John Lott for many interesting discussions. I would also like to thank Song Sun, Aaron Naber, Xiaochun Rong, Ruobing Zhang, Luca Di Cerbo, Ben Lowe, Guofang Wei for helpful conversations, and Claude LeBrun, Zolt\'{a}n Szab\'{o} for comments. 
\newline

This research was conducted during the period the author served as a Clay Research Fellow.
A.S. was partially supported by NSF grant DMS-2104254.

\vspace{3em}

\section{Definitions and first properties}


\subsection{Thick/thin decomposition}\label{subsection:tt}
Let $M$ be a connected (smooth) closed $n$-dimensional manifold
In a manifold with bounded sectional curvature, neighborhoods of points with a lower bound on the injectivity radius (denoted by $\injrad$) have well controlled geometry. More precisely, the work of Cheeger \cite{Cheeger70}, Gromov \cite{Gromov81}, Peters \cite{Peters87}, Greene and Wu \cite{GreeneWu88} imply the following. Fix $\alpha\in(0,1)$. If $(M_i,g_i)$ is a sequence with $|\sec_{g_i}|\leq 1$ and $p_i\in M_i$, if moreover $\injrad(p_i)\geq 1$ then $(M_i,g_i,p_i)$ subsequentially converges in the pointed Gromov-Hausdorff topology to a complete pointed Riemannian manifold $(M_\infty, g_\infty,p_\infty)$, where $g_\infty$ is a $C^{1,\alpha}$-metric.

On the other hand, in a manifold with bounded sectional curvature, neighborhoods of points with small injectivity radius $\injrad$ are described by the collapsing theory developed by Cheeger-Gromov \cite{CheegerGromov86,CheegerGromov90}, Fukaya \cite{Fukaya87,Fukaya88,Fukaya89}, Cheeger-Fukaya-Gromov \cite{CFG92}. Given $(M,g)$ with $|\sec_g|\leq 1$, there is a dimensional constant $\epsilon_n$ such that a neighborhood of the $\epsilon_n$-thin part
$$M^{(g)}_{\leq \epsilon_n} := \{q\in M;\quad \injrad_g(q) \leq \epsilon_n\}$$
carries an $F$-structure of positive rank and also an $N$-structure consistent with the $F$-structure. The complement of the $\epsilon_n$-thin part is called the $\epsilon_n$-thick part. More details on $F$-structures and $N$-structures are given below.

\vspace{1em}

\subsection{F-structures and N-structures}\label{subsection:fn}


If $\mathfrak{g}$ is a sheaf of Lie group on a manifold $N$ with associated sheaf of Lie algebras $\underline{\mathfrak{g}}$, an action $h$ of $\mathfrak{g}$ on $N$ is by definition a homomorphism from $\underline{\mathfrak{g}}$ to the sheaf of smooth vector fields. Recall that a subset $S$ of $N$ is said to be saturated (or invariant) for the action $h$ if the following holds: for any $x\in S$ and any open set $U\subset N$ containing $x$, for any curve $c$ that starts at a point of $S$ and that is everywhere tangent to the image $h(\mathcal{X})$ of some section $\mathcal{X}\in \mathfrak{g}(U)$, $c$ remains included inside $S$. The unique minimal saturated subset containing a point $x$ is called the orbit of $x$ and will be denoted by $\mathcal{O}^\mathfrak{g}_x$.

\textbf{$F$-structures.} 
An $F$-structure is a generalization of a torus action \cite{CheegerGromov86}. An $F$-structure on a manifold $N$ is determined by the following data. We start with a sheaf of tori, $\mathfrak{t}$, and the associated sheaf of Lie algebras $\underline{\mathfrak{t}}$, together with an action $h$ of $\mathfrak{t}$ on $N$. We then suppose that for each point $x\in N$, there is a saturated open neighborhood $V(x)$ and a finite normal covering $\pi:\tilde{V}(x) \to V(x)$ such that 
\begin{itemize}
\item the action $h$ of $\mathfrak{t}$ induces an action $\pi^*(h)$ of the pullback sheaf $\pi^*(\mathfrak{t})$, which is the infinitesimal generator of an effective action of the torus $\pi^*(\mathfrak{t})(\tilde{V}(x))$ on $\tilde{V}(x)$,
\item for all open set $W\subset \tilde{V}(x)$ such that $W \cap \pi^{-1}(x) \neq \varnothing$, the structure homomorphism $\pi^*(\mathfrak{t})(\tilde{V}(x)) \to \pi^*(\mathfrak{t})(W)$ is an isomorphism,
\item the neighborhood $V(x)$ and covering $\tilde{V}(x)$ can be chosen independent of $x$, for $x\in \mathcal{O}^\mathfrak{t}_x \subset N$.
\end{itemize}
From the above definition of $F$-structures, we see that the orbits are given by the projected images in $V(x)$ of the orbits of the local torus action in $\tilde{V}(x)$, which are quotients of tori embedded in $V(x) \subset N$. An $F$-structure has positive rank if all its orbits have positive dimensions. It is polarized if in each $V(x)$ as above, the dimension of the orbits is constant.

There is an equivalent and more practical way of defining $F$-structures by using the notion of atlas. An atlas for an $F$-structure is given by a locally finite cover $\{V_{\alpha}\}$ by open sets, and for each $\alpha$ there is a finite normal covering  $\pi_\alpha : \tilde{V}_{\alpha} \to V_{\alpha}$ with covering group $\Gamma$, together with an effective action of $\Gamma \ltimes_\rho T^{k_\alpha}$ on $\tilde{V}_\alpha$, for some representation $\rho : \Gamma \to \Aut(T^{k_\alpha})$. The local torus actions are required to satisfy the following compatibility conditions: if $U_\alpha \cap U_\beta \neq \varnothing$, then $\pi_\alpha^{-1}(V_\alpha \cap V_\beta)$ and $\pi_\beta^{-1}(V_\alpha\cap V_\beta)$ have a common covering space on which the torus actions lift to actions of $T^{k_\alpha}$ and $T^{k_\beta}$, so that $T^{k_\alpha}$ is a subgroup of $T^{k_\beta}$ or vice versa.

If a Riemannian $n$-manifold $(M,g)$ has sectional curvature between $-1$ and $1$, then a neighborhood of the $\epsilon_n$-thin part is a union of orbits of an $F$-structure of positive rank. Conversely, a manifold endowed with an $F$-structure of positive rank has a sequence of smooth metrics with bounded sectional curvature and injectivity radius going to $0$. These results are proved in \cite{CheegerGromov86,CheegerGromov90}. Paternain and Petean \cite{PaternainPetean03} later showed that when there is an $F$-structure (non-trivial, but not necessarily of positive rank), one can collapse the manifold with a uniform lower bound on the sectional curvature.

When $M$ carries an $F$-structure of positive rank which is polarized, one can collapse the volume of $M$ while keeping the curvature bounded, but the converse is probably not true (see \cite[Remark 4.1]{CheegerGromov86}). In dimensions at most $4$ however, volume collapsing with bounded curvature does imply the existence of a polarized $F$-structure; in dimension $4$ this is due to Rong \cite{Rong93}. In higher dimensions, Cheeger-Rong \cite{CheegerRong96} showed that for a closed manifold $(M,g)$ such that the diameter is bounded by $D$ and the sectional curvature is bounded by $1$, then it carries a polarized $F$-structure if the volume is smaller than a constant depending on $D$ and the dimension.

\textbf{$N$-structures.}
An $F$-structure as above describes collapsed manifolds on the scale of the injectivity radius. More generally, the thin part of a manifold with bounded curvature has an $N$-structure \cite{CFG92}, which generalizes the notion of $F$-structures, and which captures all the collapsed directions. $N$-structures on a manifold $N$ are determined by data similar to that defining $F$-structures. 
We start with a sheaf $\mathfrak{n}$ of simply connected nilpotent Lie group on $N$, with an action $h$. We suppose that for each $x\in N$, there is an open neighborhood $U(x)$ saturated for $h$ and a normal covering $\pi:\tilde{U}(p) \to U(p)$ 
such that
\begin{itemize}
\item the action $h$ of $\mathfrak{n}$ induces an action $\pi^*(h)$ of the pullback sheaf $\pi^*(\mathfrak{n})$, which is the infinitesimal generator of an action of the nilpotent group $\pi^*(\mathfrak{t})(\tilde{U}(p))$ on $\tilde{U}(p)$ whose kernel $K_p$ is discrete,
\item for all open set $W\subset \tilde{U}(p)$ such that $W \cap \pi^{-1}(x) \neq \varnothing$, the structure homomorphism $\mathfrak{n}(\tilde{U}(p)) \to \mathfrak{n}(W)$ is an isomorphism,
\item the neighborhood $U(p)$ and covering $\tilde{U}(p)$ can be chosen independent of $p$, for $p\in \mathcal{O}^\mathfrak{n}_x \subset N$.
\end{itemize}
Cheeger, Fukaya and Gromov \cite{CFG92} showed that given $\epsilon>0$ there exist $\rho>0$ and $k>0$ so that if a complete Riemannian $n$-manifold $(M,g)$ has sectional curvature between $-1$ and $1$, then any open set $Y'$ inside the $\varepsilon(n,\epsilon)$-thin part of $(M,g)$ is contained in an open set $Y$ carrying an $N$-structure $\mathfrak{n}$ with the following special properties. There is a metric $g_\epsilon$ on $M$ with 
\begin{itemize}
\item $e^{-\epsilon} g < g_\epsilon < e^{\epsilon} g$,
\item $|\nabla^g - \nabla^{g_\epsilon}|<\epsilon$,
\item $|(\nabla^{g_\epsilon})^i \Rm_{g_\epsilon} |< C(n,i,\epsilon)$.
\end{itemize}
Moreover, for any point $p$ in $Y$, there is an open neighborhood $V(p)$ and a normal covering $\pi:\tilde{V}(p)\to V(p)$ with covering group $\Lambda$ such that $\tilde{U}(p)\subset \tilde{V}(p)$ (see notations used in definition of $N$-structures above) and
\begin{enumerate}
\item there is a Lie group $H$ generated by $\Lambda$ and its identity component $N_0 := \pi^*(\mathfrak{t})(\tilde{U}(p))/ K_p $ (see notations used in definition of $N$-structures above), such that the natural action of $H$ is isometric with respect to the metric induced by $g_\epsilon$ (the metric $g_{\epsilon}$ is called ``invariant''),
\item $V(p)$ contains the geodesic ball $B_{g_\epsilon}(p, \rho)$,
\item the $g_\epsilon$-injectivity radius on $\tilde{V}(p)$ is larger than $\rho$,
\item $\sharp(\Lambda /\Lambda \cap N_0) \leq  k$,
\item the orbits of the  $N$-structure are compact and have $g_\epsilon$-diameter less than $\epsilon$.
\end{enumerate}
Moreover, any orbit $\mathcal{O}$ of $\mathfrak{n}$ is finitely covered by a nilmanifold of the form $\Gamma \setminus N'$ where $\Gamma$ is a cocompact discrete subgroup (i.e. a lattice) in a simply connected nilpotent Lie group $N'$. Note that if $C(N')$ is the center of $N'$, then $\Gamma\cap C(N')$ is a lattice inside $C(N')$ and $\Gamma\setminus C(N')$ is a torus which projects to a torus quotient embedded inside $\mathcal{O}$. In fact, as noted in \cite[Remarque 1.9]{CFG92}, by considering the ``center'' of the $N$-structure $\mathfrak{n}$ associated to a collapsed region $Y$, we obtain an $F$-structure which is necessarily of positive rank, and which will be called the canonical $F$-structure $\mathfrak{t}$ coming from $\mathfrak{n}$. In this case, each $N$-orbit is then a disjoint union of $F$-orbits. \cite{CaiRong09,Eriksson-Bique18} are also useful references.


\subsection{Definition of the essential minimal volume}
$M$ is a closed manifold. For $\delta>0$ and for a metric $g$ on $M$, set
$$M^{(g)}_{>\delta} := \{x\in M; \quad \injrad_g(x) > \delta\},$$
$$\minvol_{>\delta}(M) := \inf\{\Vol(M^{(g)}_{>\delta},g);\quad |\sec_g|\leq 1\}.$$
Note that the quantity $\minvol_{>\delta}(M)$ is non-increasing with respect to $\delta$ and is bounded, so it converges as $\delta\to 0$.

\begin{definition}\label{essminvol}
The essential minimal volume of $M$ is 
$$\essminvol(M):= \lim_{\delta\to 0} \minvol_{>\delta}(M).$$
\end{definition}

 The name $\essminvol$ partly comes from the notion of $\epsilon$-essential volume defined in \cite{BGS85} in the context of negatively curved manifolds. 
 In some sense, $\essminvol(M)$ measures the smallest possible size of the thick part of a metric on $M$ with bounded sectional curvature. 
Since for any $\delta>0$, a neighborhood of the $\delta$-thick part of a manifold with bounded sectional curvature admits a smooth triangulation with a number of vertices bounded linearly by its volume and for which the degree of each vertex is uniformly bounded (all the constants depend on $\delta$), any upper bound on $\essminvol(M)$ in fact yields a  bound on a controlled triangulation of $M$ outside of regions covered by $F$-structures. Similarly, by compactness results for metrics with bounded sectional curvature \cite{Cheeger70} \cite{Gromov81} \cite{Peters87},\cite{GreeneWu88}, a bound on $\essminvol$ implies a bound on the number of smooth manifolds, up to regions saturated by $F$-structures.

\subsection{First properties}
\label{croc}

From the definitions, 
$$\essminvol(M)\leq \minvol(M)$$
and it can be checked that they are equal for closed oriented $2$-manifolds (and $3$-manifolds with more work).
Nevertheless, the difference between $\minvol$ and $\essminvol$ can be arbitrarily large in higher dimensions, as can be shown using \cite[Example 1.9]{CheegerGromov86}.

Not many lower bounds for $\minvol$ and $\essminvol$ are known. Thanks to \cite{Gromov82}, both are lower bounded by the ``simplicial volume'' of $M$, up to a dimensional constant. There is a simple bound for $\essminvol(M)$ in terms of the Euler characteristic $\e(M)$ of $M$ (see \cite{Gromov82} for bounds on the minimal volume using characteristic numbers):
\begin{lemme} \label{euler}
There is a dimensional constant $C_n$ such that
$$\e(M)\leq C_n \essminvol(M).$$
\end{lemme}

\begin{proof}
This is essentially a corollary of \cite{CheegerGromov91}. Consider some $\epsilon_n>0$. By definition of the essential minimal volume, there is a metric $g$ on $M$ with $|\sec_g|\leq 1$, so that the $\epsilon_n$-thick part $M^{(g)}_{\leq \epsilon_n}$ of $(M,g)$ has volume at most $C'_n \essminvol(M)$ for some dimensional constant $C'_n>0$.
By Lemma \ref{chopping}, which will be proved in the next Subsection, we can modify $g$ to get a new metric $g_1$ with $|\sec_{g_1}|\leq 1$ so that for some dimensional constant $C''_n>0$ and some neighborhood $U$ of $M^{(g)}_{\leq \epsilon_n}$ with smooth boundary,
$\Vol(U,g_1)\leq C''_n \essminvol(M)$, $g_1$ is a product metric on a uniform tubular neighborhood the boundary $\partial U$ with is $g_1$-totally geodesic, and $\injrad_{g_1} \leq C''_n\epsilon_n$ on $M\setminus U$.
Fix $\epsilon_n$ small enough so that \cite{CheegerGromov90} applies to the $C''_n\epsilon_n$-thin part of $(M,g_1)$.
Then $M\setminus U$ is saturated by an $F$-structure of positive rank so it has vanishing Euler characteristic by \cite[Lemma 1.5]{CheegerGromov86}.
We conclude by applying the Chern-Gauss-Bonnet formula for compact manifolds with boundary to $(U,g_1)$.

\end{proof}

We have the following  estimate for the essential minimal volume of connected sums:
\begin{lemme} \label{connected sum}
Let $M_1,M_2$ be two closed connected manifolds of same dimension $n$, and let $k,l$ be positive integers. Then the connected sum $kM_1\sharp l M_2$ satisfies
$$\essminvol(k M_1\sharp l M_2) \leq  C_n\big(k\essminvol(M_1) + l\essminvol(M_2)+k+l \big) $$
where $C_n>1$ is a dimensional constant.
\end{lemme}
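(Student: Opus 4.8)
The plan is to build an explicit bounded-curvature metric on $kM_1 \sharp l M_2$ by assembling small-volume building blocks, combining near-minimizing metrics on $M_1$ and $M_2$ with standard "neck" metrics for the connected-sum operation, and to control each piece using the uniform collapsing results of Section~\ref{section:uniform collapsing}. The key point is that performing a connected sum only requires removing a small ball from each summand and gluing in a cylindrical neck; if we first arrange the metrics near the excised balls to be collapsed (small injectivity radius) with uniformly bounded curvature, the necks can be inserted while keeping the curvature uniformly bounded, and the thick part of the result is essentially the disjoint union of the thick parts of the summands.

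\textbf{Step 1: Near-minimizing metrics on the summands.} Fix a small $\theta>0$. By the definition of $\essminvol$ and Lemma~\ref{aqw}, for each $j\in\{1,2\}$ and each $\delta>0$ choose a metric $g_j$ on $M_j$ with $|\sec_{g_j}|\le 1$ and $\Vol((M_j)^{(g_j)}_{>\delta}, g_j) \le \essminvol(M_j)+\theta$. Using Abresch's smoothing \cite{Abresch88}, replace $g_j$ by an $\mathbf{A}$-regular metric (with $\mathbf{A}$ dimensional, depending on $\theta$) at the cost of an $e^{\pm\theta}$ bilipschitz change, so $|\sec|\le 1$ is preserved up to a factor $1+\theta$. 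Inside each $M_j$ pick a point $p_j$ lying in the $\epsilon_n$-thin part if one exists; more precisely, it suffices to find a point where the injectivity radius is at most $\epsilon_n$ and a small metric ball $B_j$ around it on which $\injrad \le \epsilon_n$ everywhere. If no such point exists, i.e. $(M_j, g_j)$ is entirely thick, then its volume is already comparable to $\essminvol(M_j)$ plus a dimensional constant, and one can instead graft the neck into a fixed unit ball paying only an additive $C_n$; this is where the ``$+k+l$'' terms come from.

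\textbf{Step 2: Collapsing a collar and inserting necks.} Apply Theorem~\ref{keyprop}(A), or really Corollary~\ref{further} together with Lemma~\ref{chopping}, on a small neighborhood of $B_j$: this produces a metric, bilipschitz-close to $g_j$ away from $B_j$ and agreeing with $g_j$ near the boundary of the neighborhood, whose injectivity radius on $B_j$ is as small as we wish, with curvature bounded by a uniform constant $C=C(n,\mathbf{A})$. Now remove a tiny ball from the collapsed region and glue in a standard ``connected-sum neck'' — a rotationally symmetric metric on $S^{n-1}\times[0,L]$ interpolating between two such collapsed caps — whose curvature is controlled by a dimensional constant once the two ends are collapsed at a comparable, small scale (this is the classical Gromov--Cheeger type neck construction; the relevant point is that joining two regions of small injectivity radius with bounded curvature can be done with bounded curvature and bounded extra volume, indeed with arbitrarily small extra volume as the collapsing scale goes to $0$). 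Iterating: to form $kM_1\sharp lM_2$, take $k$ collapsed copies of $(M_1,g_1)$ and $l$ collapsed copies of $(M_2,g_2)$ and chain them together with $k+l-1$ such necks, or connect them all to one auxiliary collapsed $S^n$.

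\textbf{Step 3: Rescaling and counting volume.} After gluing we obtain a metric on $kM_1\sharp lM_2$ with $|\sec|\le C(n,\mathbf{A})$; rescale by $C^{-1/2}$ (or rather by $1/\sqrt{C(1+\theta)}$) to get $|\sec|\le 1$. Under this rescaling volumes shrink by $C^{-n/2}\le 1$. The $\delta$-thick part of the result is contained in the union of the (rescaled) thick parts of the $k+l$ summand copies together with the necks; the neck contributions can be made negligible (they are collapsed) while each summand copy contributes at most $\Vol((M_j)^{(g_j)}_{>\delta'},g_j)$ for a suitable $\delta'$, which after the bilipschitz and rescaling corrections is at most $C_n(\essminvol(M_j)+\theta+1)$. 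Letting $\delta\to 0$ and then $\theta\to 0$ yields
$$\essminvol(kM_1\sharp lM_2) \le C_n\big(k\,\essminvol(M_1)+l\,\essminvol(M_2)+k+l\big),$$
as claimed.

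\textbf{Main obstacle.} The delicate point is Step~2: ensuring that the neck-insertion and the transition between the collapsed cap and the ambient (not necessarily collapsed, but $\mathbf{A}$-regular) metric can be done with a curvature bound depending only on $n$ and $\mathbf{A}$, and not on the original metric. This is precisely the ``uniform collapsing'' difficulty the paper repeatedly confronts; here it is handled by invoking Theorem~\ref{keyprop}(A) and Corollary~\ref{further}, which were engineered to collapse a compact region while leaving the metric untouched near its boundary, with uniform curvature control. The only genuinely new bookkeeping is matching the collapsing scales of the two ends of each neck, which is routine once one has a one-parameter family $\{g_\delta\}$ as in Corollary~\ref{further}.
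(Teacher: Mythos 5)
Your overall blueprint — reduce to grafting near-minimizing approximations of $M_1$ and $M_2$ along necks, and rely on the uniform-collapsing machinery to control curvature — is reasonable, but Step~2 contains a genuine error that undermines the whole construction. You claim that ``joining two regions of small injectivity radius with bounded curvature can be done with bounded curvature and bounded extra volume, indeed with arbitrarily small extra volume as the collapsing scale goes to $0$.'' For a connected-sum surgery in dimension $n\ge 3$ this is false: the neck is diffeomorphic to $S^{n-1}\times[0,L]$, and a cross-sectional sphere $S^{n-1}$ of radius $r$ carries intrinsic sectional curvature on the order of $1/r^2$. To have $|\sec|\le C$ on the neck you are forced to take $r$ of order $1$, which requires an embedded metric ball of definite size in each summand — precisely what a collapsed region does not provide. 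Collapsing $B_j$ further (as you do via Theorem~\ref{keyprop}(A) or Corollary~\ref{further}) only makes this worse: you are shrinking the injectivity radius where you need to \emph{increase} it. The ``unit ball'' escape hatch you mention in Step~1 only addresses the all-thick case and cannot be invoked when the near-minimizing metric on $M_j$ is entirely collapsed, i.e.\ precisely when $\essminvol(M_j)=0$.

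The paper's proof isolates exactly this obstruction. It reduces the lemma to the observation that for any closed $M$ there is a sequence $g_i\in\mathcal{M}_{|\sec|\le 1}(M)$ strongly converging to some $(N,h)$ with $\Vol(N,h)\le C_n(\essminvol(M)+1)$ \emph{and} with a point where $\injrad_h>1$; once each summand is represented by such a sequence, the connected sums are carried out at the thick points by a routine gluing. By Theorem~\ref{exists} the observation is automatic when $\essminvol(M)>0$ (minimizers have a non-collapsed component). When $\essminvol(M)=0$, the paper instead exploits the $F$-structure of positive rank guaranteed by Lemma~\ref{essminvol=0}: one extracts a fiber $K$ (a torus quotient) with an $F$-saturated tubular neighborhood $\mathcal{N}\cong K\times\mathbb{R}^{n-k}$, puts a flat metric on $\mathcal{N}$ with injectivity radius at least $1$ and curvature at most $1/2$, and collapses the complement with bounded curvature via \cite{CheegerGromov86} while holding the metric on $\mathcal{N}$ fixed. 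Since there are only finitely many diffeomorphism types of torus quotients in each dimension, $\Vol(\mathcal{N})$ is bounded by a dimensional constant, which is what produces the ``$+1$'' per summand. So the missing ingredient in your argument is this inflation-via-$F$-structure step; without it you cannot manufacture the thick ball that the connected sum needs, and no amount of further collapsing or neck tuning will substitute for it.
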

\begin{proof}
It suffices to show the following observation: for any closed manifold $M$ and any $\delta>0$, there is a metric $g$ with $|\sec_g|\leq 1$, such that
$$\Vol(M_{>\delta}^{g},g) \leq C_n(\essminvol(M) +1)$$
for some large enough dimensional constant $C_n$, and such that there is a point $x\in M$ where the injectivity radius of $g$ is larger than $1$. Indeed, if this observation is true then the lemma follows by induction and by joining two manifolds $M_1$, $M_2$ using  a bounded geometry neck attached to the bounded geometry regions. 

The observation is only nontrivial when for any $\delta>0$, there is a metric $g$ such that the $\delta$-thick part $M_{>\delta}^{g}$ is empty. By \cite{CheegerGromov86} (see Subsections \ref{subsection:tt} and \ref{subsection:fn}), such a scenario necessarily implies  that $M$ admits an $F$-structure of positive rank, 
so there is a fiber of the $F$-structure, i.e. a $k$-dimensional torus quotient $K$ embedded in $M$, and a tubular neighborhood $\mathcal{N}$ of $K$ diffeomorphic to $K\times D^{n-k}$ where $D^{n-k}$ is a Euclidean unit ball in $\mathbb{R}^{n-k}$, 
such that both $\mathcal{N}$ and $M\backslash \mathcal{N}$ are saturated by the $F$-structure in a natural way. 

Recall that in each dimension, the number of diffeomorphism types of torus quotients, such as $K$, is finite. 
We start with a flat metric $g'=h_K\oplus h'$ on the closure of $\mathcal{N} \approx K\times D^{n-k}$ such that $|\sec_{g'}|\leq 1/2$ and $g'$ has injectivity radius at least $1$. Here, 
 $h_K$ is a flat metric on $K$ with volume bounded by a dimensional constant, and $h'$ is a metric on $\mathbb{D}^{n-k}$ which is the product of a unit round metric $g_{S^{n-k}} $ and the standard metric on $(0,1)$ near the boundary $\partial D^{n-k}$.
 Let $\delta>0$. 
By applying the metric deformation of \cite[Section 4]{CheegerGromov86} to $M\setminus \mathcal{N}$, we can find a metric $g_1$ on $M\setminus \mathcal{N}$ such that  $|\sec_{g_1}|\leq 1$, $\injrad_{g_1}< \delta$, and such that we have the product metric structure $g_2 =\lambda h_K\oplus h''$ near $\partial (M\setminus \mathcal{N})$, where $\lambda>0$ is some tiny constant and $h''$ is a product metric equal to $h'$ on the boundary.   
On the other hand, applying an elementary version of the metric deformation in \cite[Section 3]{CheegerGromov86} to contract the $K$-factors near the boundary $\partial \mathcal{N}$, we can construct a metric $g_2$ on $\mathcal{N}$ with $|\sec_{g_2}|\leq1$, such that there is a point in $\mathcal{N}$ with injectivity radius $1$, but $\injrad_{g_2}< \delta$ around the boundary $\partial \mathcal{N}$, and $g_2 = \lambda h_K\oplus h'$ near $\partial \mathcal{N}$. Moreover, we can ensure that
$$\Vol(\mathcal{N}^{g_2}_{>\delta}) \leq C_n$$
for some dimensional constant $C_n$. Those properties are verified if for instance the size of the $K$-factors are contracted exponentially with respect to the $g_2$-distance to the central $K$-factor $K\times \{0\}$ in $\mathcal{N}$. 
We conclude the proof of the observation by gluing $g_1$ and $g_2$ along $\partial \mathcal{N}$.
\end{proof}

Let $\mathcal{M}_{|\sec|\leq 1}(M)$ be the set of metrics on $M$ with sectional curvature between $-1$ and $1$.
As a remark, based on the convergence theory for bounded sectional curvature metrics \cite{Cheeger70, Gromov81, Peters87, GreeneWu88}, it can be shown that $\essminvol(M)$ is the infimum of the volume of $(Y,h)$, where $(Y,h)$ belongs to a natural closure $\overline{\mathcal{M}_{|\sec|\leq 1}}(M)$ of $\mathcal{M}_{|\sec|\leq 1}(M)$. Moreover, there is a minimizing metric, namely an $n$-manifold with a complete finite volume $C^{1,\alpha}$-metric $(M_\infty,g_\infty)$ belonging to $\overline{\mathcal{M}_{|\sec|\leq 1}}(M)$, such that
$$\essminvol(M) = \Vol(M_\infty,g_\infty).$$
Curiously, the analogous  property for the standard minimal volume is probably false, see the example of Januszkiewicz \cite[Example 1.9]{CheegerGromov86}.
\vspace{1em}

We end this subsections with some questions:
\begin{enumerate}
\item What are $\minvol$ and $\essminvol$ for the standard $4$-sphere? What do the minimizing metrics  look like? 
Note that according to \cite{Ivansic12} 
there are embedded $2$-tori inside $S^4$ such that their complement admits a hyperbolic metric with volume equal to that of the round unit $S^4$, so if the unit round metric on $S^4$ is a minimizer for the essential minimal volume of $S^4$, then it is non-unique. 
\item Some topological 4-manifolds admit an infinite number of different smooth structures.
 Is there a bound on $\minvol$ and $\essminvol$ depending only on the underlying topological structure of a 4-manifold?
 Generalizing to noncompact manifolds, for an exotic $\mathbb{R}^4$, can $\minvol$ and $\essminvol$ be zero or finite?
\end{enumerate}


\subsection{Uniform local collapsing in dimension 4}

The main result of this section is the technical Proposition \ref{keyprop} , which enables to locally collapse 4-manifolds with a uniform bound on the curvature.
In what follows, if $S$ is a subset of a manifold $(V,g)$ and $r>0$, set
$$T_{g,r}(S):=\{x\in M; \quad \dist_g(x,S)< r\}.$$
Recall that if $\mathbf{A}:=\{A_l\}_{l=1}^\infty$ is a sequence of positive numbers, a metric $g$ is said to be $\mathbf{A}$-regular if 
$$\forall l\geq 0,\quad |\nabla^{l}\Rm_g|_g \leq A_l.$$

We start with a corollary of the chopping techniques in  \cite{CheegerGromov91}:
\begin{lemme} \label{chopping}
Let $(M,g)$ be an $\mathbf{A}$-regular complete Riemannian $n$-manifold and let $S$ be a compact subset of $M$. 
Then there exist a constant $\kappa=\kappa(n,\mathbf{A})>0$, a sequence of positive numbers $\mathbf{B} = \mathbf{B}(n,\mathbf{A})$, and a new complete metric $\tilde{g}$ on $M$ such that 
\begin{itemize}
\item $\tilde{g}=g$ on $S$ and $\kappa g < \tilde{g} < \kappa^{-1} g$ on $M$,
\item $|\sec_{\tilde{g}}|\leq \kappa^{-1} |\sec_g|$ everywhere and $\tilde{g}$ is $\mathbf{B}$-regular,
\item there is a compact subset $U$ such that $S\subset U \subset T_{g,1}(S)$ and if $\partial U\neq \varnothing$, then with respect to $\tilde{g}$, the $\kappa$-tubular neighborhood of $\partial U$ is isometric to the product metric 
$$(\partial U \times (-\kappa, \kappa), \tilde{g}\big|_{\partial U} \oplus dt^2)$$ where $dt^2$ is the standard metric on $(-\kappa, \kappa)$. 
\end{itemize}

\end{lemme}
\begin{proof}
By the proof of the main result in \cite{CheegerGromov91}, there is a neighborhood $U$ of $S$ such that $S\subset U \subset T_{g,1}(S)$ and if $\partial U\neq \varnothing$, it satisfies
$$|\II_{\partial U}|_g < C \quad\text{and}\quad \underline{\rho}_g(\partial U) > C^{-1}$$
for some $C=C(n)$. Here $\II_{\partial U}$ denotes the second fundamental form of $\partial U$ and $\underline{\rho}_g(\partial U)$ is the normal injectivity radius of $\partial U$, all with respect to $g$. (See \cite[(1.11)]{CheegerGromov91} where $f_X$ should read $\nabla f_X$). Hence in a normal exponential coordinate chart of a neighborhood $W$ of $\partial U$, the metric $g$ takes the form 
$$\big(\partial U \times (-2\kappa_1,2\kappa_1), h(t) \oplus dt^2\big)$$ 
where $h(t)$ is a metric on $\partial U \times \{t\}$ and $\kappa_1\in (0,\frac{1}{2})$ only depends on $n$ and $\mathbf{A}$.
Consider a cut-off function 
$$\varphi : (-2\kappa_1,2\kappa_1) \to [0,1]$$ such that $\varphi(t) = 0$ near $t=-2\kappa_1$ and $t=2\kappa_1$, $\varphi(t)=1$ if $t\in (-\kappa_1,\kappa_1)$, and the $i$-th derivative of $\varphi$ is bounded in terms of $i$ and $\kappa_1$. Since $g$ is $\mathbf{A}$-regular, the $i$-th covariant derivatives of $h(t)$ on $\partial U \times \{t\}$ and the $i$-th $t$-derivatives of $h(t)$ are bounded in terms of $i$, $n$ and $\mathbf{A}$. Consider the new metric $\tilde{g}$ equal to $g$ outside the neighborhood $W$ of $\partial U$, and given by the following expression in the chart $\partial U \times (-2\kappa_1,2\kappa_1)$:
$$\tilde{g} = \big((1-\varphi(t))h(t) + \varphi(t) h(0) \big)\oplus dt^2.$$
It follows that 
 $$\kappa_2 g < \tilde{g} < \kappa_2^{-1} g \quad  \text{on $M$}$$
and that the sectional curvature of $\tilde{g}$ is bounded in absolute value by $\kappa_2^{-1}$, for a constant $\kappa_2$ only depending on $n$ and $\mathbf{A}$. Moreover, $\tilde{g}$ is $\mathbf{B}$-regular for $\mathbf{B}$ only depending on $n$ and $\mathbf{A}$ (to see this, one can consider covering spaces of bounded geometry for neighborhoods of points in $\partial U$). By taking $\kappa$ small enough compared to $\kappa_1,\kappa_2$, this metric $\tilde{g}$ satisfies the conclusion of the lemma.

\end{proof}

The next result extends the collapsing construction of Cheeger-Gromov \cite{CheegerGromov86}; it enables, in dimension 4, to further collapse already collapsed compact regions with a uniform curvature bound and without changing the metric near their boundaries. See  \cite[Theorem 4.1]{Rong93} for a related result. 
It is crucial for applications that the curvature bounds are ``uniform'', which means that they do not depend on the specific metric we start with, but only on its regularity and the dimension. 

\begin{prop} \label{keyprop}
Let $\epsilon_4>0$ be a small dimensional constant. Then for any $\mathbf{A}:=\{A_l\}_{l=0}^\infty$, there is $C=C(n,\mathbf{A})>0$, such that the following holds. 
Let $(M,g)$ be an $\mathbf{A}$-regular complete Riemannian $4$-manifold with  $|\sec_g|\leq 1$ and let $X\subset M$ be a compact $4$-dimensional domain with smooth boundary
such that  $\injrad_{g} \leq {\epsilon_4}$ on $X$. Then for any $\mu>0$ there is  a metric $\hat{g}$ on $M$ with
\begin{itemize}
\item $\hat{g}= g$ in a neighborhood of $M\setminus X$,
\item $|\sec_{\hat{g}}|\leq  C$,
\item $\Vol(\{x\in X;  \injrad_{\hat{g}}(x) \geq \mu\} ,\hat{g}) \leq C \Vol(T_{g,1}(\partial X), g) $.
\end{itemize}

\end{prop}

\begin{proof}

In this proof, we will use the words ``uniform'' or ``uniformly'' whenever a bound only depends on $n$ and $\mathbf{A}$. For instance a metric is uniformly regular if it is $\mathbf{B}$-regular for some $\mathbf{B}=\mathbf{B}(n,\mathbf{A})$. 

By a rescaling argument, and by applying Lemma \ref{chopping} to the $1$-neighborhood of $M\setminus X$,  it clearly suffices to prove the proposition in the special case where there is a $\kappa = \kappa (n,\mathbf{A})$ such that the $\kappa$-tubular neighborhood of $\partial X$ isometric to 
$$(\partial X \times [0, \kappa), g\big|_{\partial X} \oplus dt^2).$$
In that situation, $\partial X$ is a totally geodesic hypersurface, on which the induced metric has sectional curvature bounded between $-1$ and $1$. We set 
$$Y := X\setminus   T_{g,\kappa/3}(\partial X),\quad Z:= Y\setminus T_{g,\kappa/3}(\partial Y).$$ 
If $\epsilon_4$ is a small dimensional constant, $Y$ has an $F$-structure by \cite{CheegerGromov90} and our assumptions. 
Now, we want to deform the metric inside $X$ keeping the curvature \textit{uniformly} bounded and making the injectivity radius small far from $\partial X$. 
This does not follow from \cite[Sections 3 and 4]{CheegerGromov86} since there, the $F$-structure found is not uniformly controlled, and so the curvature bound depends on the starting Riemannian  manifold. For this reason, we need to make use of the more refined results of  \cite{CFG92}: to summarize the key point of the proof below, we apply the collapsing construction of \cite{CheegerGromov86} to the canonical $F$-structure induced by the $N$-structure found in \cite{CFG92}, and take advantage of the fact that this $N$-structure is uniformly controlled  \cite[Sections 6 and 7]{Fukaya88} \cite[Theorem 2.6, Section 8, Appendix 1]{CFG92}.


By \cite{CFG92} (see Subsection \ref{subsection:fn}) if $\epsilon_4$ is a small dimensional constant and if $\epsilon'$ another small dimensional constant  chosen in a moment, there is an $N$-structure $\mathfrak{n}$ on $Y$ which can be assumed to restrict to an $N$-structure on each slice $\partial X \times \{t\}$ in $T_{g,\kappa}(\partial X) \cap Y$ and a corresponding ``invariant'' $\mathbf{C}$-regular metric $g_{\epsilon'}$ close to $g$ and with bounded curvature on $X$, for some $\mathbf{C}=\mathbf{C}(n,\mathbf{A}, \epsilon')$. Because $g$ (resp.  $g_{\epsilon'}$) is $\mathbf{A}$-regular (resp. $\mathbf{C}$-regular) and $e^{-\epsilon'} g<g_{\epsilon'}<e^{\epsilon'} g$, if the dimensional constant $\epsilon'$ is small enough, we can construct a metric that interpolates between $g$ on $M\setminus X$ and $g_{\epsilon'}$ on $Y$ while keeping the curvature uniformly bounded, and which is a product metric on $Y\setminus Z$. This is done by generalizing \cite[Lemma 5.3]{CheegerGromov85}: using a convolution and center of mass argument, we first construct a map from $(\partial Y, g)$ to $(\partial Y,g_{\epsilon'})$ with uniformly bounded derivatives and we interpolate between these two metrics  using a cut-off function with uniformly bounded Hessian. (The smallness of $\epsilon'$ is to guarantee that the center of mass is well defined.) 
We will still call such a metric $g$, and we can assume that it is uniformly regular.

As explained in Subsection \ref{subsection:fn}, this $N$-structure $\mathfrak{n}$ gives rise to a canonical $F$-structure $\mathfrak{t}$ on $Y$. We can find a finite covering of $Y$ by open sets, called $\{V_\alpha\}$, where each open set  $V_\alpha$ is of the form $V(p_\alpha)$ for some point $p_\alpha\in Y$ as in the definition of $N$-structures. Moreover, we can assume that for points of $Y\setminus Z$, the open sets $V_\alpha$ are of the form $W_\alpha \times I$ where $W_\alpha$ are open subsets of $\partial X$ and $I$ are intervals of length $\kappa/3$.
Recall that orbits of $\mathfrak{n}$ contain torus quotients corresponding to the $F$-structure $\mathfrak{t}$.
 From the proofs in \cite{CFG92}, one checks that the $\{V_\alpha\}$ can be chosen so that each $V_\alpha$ is saturated by orbits of $\mathfrak{n}$ and so that $\{V_\alpha\}$ defines an atlas for the $F$-structure $\mathfrak{t}$ (the definition of atlas for an $F$-structure is recalled in Subsection \ref{subsection:fn}). 
Each $V_\alpha$ can be chosen to contain a $g$-geodesic ball of radius $\rho$ independent of $(M,g)$ (see Subsection \ref{subsection:fn}). Two important consequences are that $\mathfrak{t}$ has an atlas $\{V_\alpha\}$
with covering multiplicity bounded by a dimensional constant $c'_n$ (that is, any point is contained in at most $c'_n$ different $V_\alpha$),
and that each chart $V_\alpha$ has a uniformly large size depending only $n,\mathbf{A}$.
By the local description of $N$-structures in \cite[Appendix 1]{CFG92}, each $V_\alpha$ is covered by an open subset of the form 
$$T_R(\mathcal{O}^{\mathfrak{n}_\alpha}_{x_0}),$$
the tubular $R$-neighborhood of an orbit $\mathcal{O}^{\mathfrak{n}_\alpha}_{x_0}$ with normal injectivity radius at least $R$, where the radius $R\in (0,\kappa)$ depends only on $n,\mathbf{A}$. The second fundamental form of the orbit $\mathcal{O}^{\mathfrak{n}_\alpha}_{x_0}$ is uniformly bounded, see \cite[(8.7), Section 8]{CFG92}. The metric $g$ on that tubular neighborhood is uniformly close to the natural metric coming from the normal exponential map \cite[Section 8, Appendix 1]{CFG92}.

An $F$-structure $\mathfrak{t}$ on a $3$-manifold always has a polarized substructure $\mathfrak{t}'$, see \cite[Proposition 3.1]{Rong93b}. 
Moreover if $\{V_\alpha\}$ is an atlas of $\mathfrak{t}$, then $\{V_\alpha\}$ also naturally furnishes an atlas for $\mathfrak{t}'$. Hence, since $\dim(M)=4$, the $F$-structure on $Y$ considered above restricts to an $F$-structure on $\partial Y$ which has a polarized substructure $\mathfrak{t}'$.
By the uniform control described in the previous paragraph, by how $\mathfrak{t}$ is induced by $\mathfrak{n}$ (Subsection \ref{subsection:fn}), and by how $\mathfrak{t}'$ is constructed \cite[Proposition 3.1]{Rong93b},
the polarized $F$-structures $\mathfrak{t}'$ on the closed $3$-manifold $\partial Y$ can be chosen to be ``uniformly controlled'', so that the deformation of \cite[Sections 3]{CheegerGromov86} can be done with a uniform bound on the sectional curvature.


By the computations of \cite[Theorem 3.1]{CheegerGromov86} and the uniform control of $\mathfrak{t}'$, there is a family of metrics $h_{\delta'}$ on $\partial Y$ (with $\delta'\leq 1$) so that $h_1=g\big|_{\partial Y}$, the volume of $(\partial Y,h_{\delta'})$ is smaller than $C(\delta' )^{l_1} \log(\delta')^{l_2} \Vol(\partial Y, h_1)$ for some uniform constants $C,l_1,l_2>0$ and $|\sec_{h_{\delta'}}|$  is bounded by a constant $C=C(n,\mathbf{A})$ independent of $\delta'$. 
Let $\delta>0$ be a small constant to be fixed later.
Set $\bar{g}_{\delta}$ to be the metric on $Y\setminus Z$ defined by
$$\bar{g}_\delta := {h}_{e^{-\phi(r)}} \oplus dr^2 $$
on $\partial Y \times [0,-\log(\delta)]$, which is identified with $Y\setminus Z$, and where $\phi$ is a small perturbation of the identity which is constant on a neighborhood of $0$ and $-\log(\delta)$. 
Then the curvature of $\bar{g}_{\delta} $ is still upper bounded only in terms of $n$ and $\mathbf{A}$.

Let $0<\mu_1<\mu$ be two arbitrarily small constants. 
Using \cite[Section 4]{CheegerGromov86}, there are also metrics $g_{\delta'}$ on $Z$ (with $\delta'\leq 1$) with sectional curvature bounded by $C_1(n,\mathbf{A},g)$ perhaps depending on the metric $g$ but independent of $\delta'$, such that its injectivity radius is smaller than $\mu_1$ if $\delta'=\delta$ is small enough. 
Because of the metric product structure near  $\partial Z$ and the uniform control of $\mathfrak{t}'$ on $\partial Y \approx \partial Z$,
$g_\delta$ can be chosen to equal $h_\delta$ in a neighborhood of $\partial Z$.
Define $\tilde{g}_1$ on $X$ to be $g_\delta$ on $Z$, $\bar{g}_\delta $ on $Y\setminus Z$, and $g$ on $M\setminus Y$. 
Right now, the sectional curvature of $\tilde{g}_1$ is uniformly bounded on $M\setminus Z$,  but not necessarily on $Z$, so we need to make one last change of metric. For some large constant $A$, consider $Z\cup (\partial Z \times [0,A]) \cup (M\setminus Z)$, where we identify $\partial Z$ with $\partial Z\times \{0\}$, and $\partial Z\times \{A\}$ with $\partial (M\setminus Z)$ (this manifold is just $M$). Set $\tilde{g}_2$ to be the metric obtained by extending $\tilde{g}_1$ in the obvious way on $Z\cup (\partial Z \times [0,A]) \cup (M\setminus Z)$.
Since $\mu_1,\mu$ are independent, we can choose 
$$\mu_1 \ll C_1(n,\mathbf{A},g)^{-1} \mu.$$
Consider a function $F:M\to (0,\infty)$ with uniformly controlled Hessian and nonincreasing on $\partial Z \times [0,A] $, equal to $1$ on $M\setminus Z$, and equal to $C_1(n,\mathbf{A},g)$ on $Z$.
We set 
$$\hat{g}:=F\tilde{g}_2.$$
As $A$ can be taken arbitrarily large, we can choose $A$ and $F$ so that $|\sec_{\hat{g}}|\leq C(n,\mathbf{A})$, and $\hat{g}$ has  injectivity radius strictly less than  $\mu$ on $Z\cup (\partial Z \times [0,A])$. The volume $\Vol(\{x\in X;  \injrad_{\hat{g}}(x) \geq \mu\} ,\hat{g})$ is bounded as wanted by a simple computation.

\end{proof}


\subsection{A practical bound on $\essminvol$ in dimension 4}

As usual, if $(M,h)$ is a Riemannian $4$-manifold and $\hat{\epsilon}$ a dimensional constant, denote by $M^{(h)}_{> \hat{\epsilon}}$ the $\hat{\epsilon}$-thick part of $M$, that is:
$$M^{(h)}_{> \hat{\epsilon}} := \{x\in M; \quad \injrad_h(x) > \hat{\epsilon}\}.$$

\begin{prop} \label{quantify}
There are constants $\hat{\epsilon}>0$ and $C>0$ such that the following holds for any $T>0$ and any closed $4$-manifold $M$.
If there is a metric $g$ on $M$ with $|\sec_g|\leq 1$ such that
$$\Vol(M^{(g)}_{> \hat{\epsilon}},g) \leq T,$$
then 
$$\essminvol(M)\leq CT.$$

\end{prop}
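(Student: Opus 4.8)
The plan is to exhibit, for every $\delta>0$, a metric in $\mathcal M_{|\sec|\le 1}(M)$ whose $\delta$-thick part has volume at most $CT$ for a dimensional constant $C$; since $\essminvol(M)=\lim_{\delta\to 0}\minvol_{>\delta}(M)$ and $\minvol_{>\delta}$ is monotone in $\delta$, this yields $\essminvol(M)\le CT$. If $M^{(g)}_{>\hat\epsilon}=\varnothing$ (so $\injrad_g\le\hat\epsilon<\epsilon_4$ everywhere), then $M$ carries an $F$-structure of positive rank and $\essminvol(M)=0$ by Lemma \ref{essminvol=0}, so I may assume the thick part is non-empty. Applying Abresch's smoothing \cite{Abresch88}, I replace $g$ by an $\mathbf A$-regular metric with $\mathbf A=\mathbf A(n)$ a dimensional sequence, at the cost of multiplying all the curvature, volume and injectivity-radius bounds below by dimensional factors; I keep calling this metric $g$.

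Using the chopping theorem of Cheeger--Gromov \cite{CheegerGromov91} together with Lemma \ref{chopping}, I would produce a compact domain $X\subset M$ with smooth boundary $\partial X$ that is totally geodesic with a product collar (as required by Theorem \ref{keyprop}), such that $M^{(g)}_{\le\hat\epsilon}\subseteq X$ — hence $\injrad_g\le\epsilon_4$ on $X$, once $\hat\epsilon\le\epsilon_4$ — while $M\setminus X$ is essentially a regular neighborhood of the thick part $M^{(g)}_{>\hat\epsilon}$ and satisfies both
$$\Vol(M\setminus X,g)\le CT\qquad\text{and}\qquad \Vol(T_{g,1}(\partial X),g)\le CT .$$
This is the crux of the argument: the thick part has bounded geometry and volume $\le T$, so its regular neighborhood has controlled size, and the portion of $T_{g,1}(\partial X)$ lying on the thin side of $\partial X$ is locally collapsed in the volume sense, so its total volume is governed by the size of the interface $\partial X$, which is in turn controlled by $\Vol(M^{(g)}_{>\hat\epsilon},g)\le T$; a dimensionally small choice of $\hat\epsilon$ makes all the constants dimensional.

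Next I would invoke Theorem \ref{keyprop}(C); this is where $\dim M=4$ is used, as (C) requires $\dim X\le 4$ and is the only part of Theorem \ref{keyprop} that controls the volume of the collapsed region. For each $\mu>0$ it produces a metric $\hat g_3^\mu$ on $X$ with $\hat g_3^\mu=g$ near $\partial X$, $|\sec_{\hat g_3^\mu}|\le C$, and $\Vol(\{x\in X:\ \injrad_{\hat g_3^\mu}(x)\ge\mu\},\hat g_3^\mu)\le C\,\Vol(T_{g,1}(\partial X),g)\le CT$. Gluing $\hat g_3^\mu$ on $X$ to $g$ on $M\setminus X$ (they agree near $\partial X$) gives a metric on $M$ with $|\sec|\le C$; rescaling by $C$ produces $G^\mu\in\mathcal M_{|\sec|\le 1}(M)$, under which volumes are multiplied by $C^2$ and injectivity radii by $\sqrt C$. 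Hence for any $\delta>0$, choosing $\mu=\delta/\sqrt C$,
$$\Vol\big(M^{(G^\mu)}_{>\delta},G^\mu\big)\le C^3\big(\Vol(M\setminus X,g)+\Vol(\{x\in X:\injrad_{\hat g_3^\mu}(x)\ge\mu\},\hat g_3^\mu)\big)\le CT .$$
Thus $\minvol_{>\delta}(M)\le CT$ for every $\delta>0$, and therefore $\essminvol(M)\le CT$ with $C$ dimensional.

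The step I expect to be the main obstacle is the chopping/volume-bookkeeping one: arranging a product-collar domain $X$ that encloses the $\hat\epsilon$-thin part and whose complement $M\setminus X$ and boundary neighborhood $T_{g,1}(\partial X)$ both have $g$-volume bounded by $CT$ for a dimensional constant $C$ and a dimensional choice of $\hat\epsilon$ — everything after that is an application of Theorem \ref{keyprop}(C) and elementary rescaling. (Theorem \ref{keyprop}(A) and Corollary \ref{further} would also let one push $\mu\to 0$, but only (C) keeps the volume of what survives under control.)
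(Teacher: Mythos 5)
Your overall plan coincides with the paper's: discard the thick part, apply Theorem \ref{keyprop}(C) to the complement, glue, rescale, and read off $\essminvol$ from the $\mu$-thick part. The volume bookkeeping (Bishop--Gromov bound on $T_{g,1}(\partial X)$ via small balls that sit in the thick part) is also what the paper does. But there is a genuine logical gap in the step you yourself flag as the crux, and the slip is more than cosmetic.

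You write that one should take $X$ with $M^{(g)}_{\le\hat\epsilon}\subseteq X$ and that this, ``once $\hat\epsilon\le\epsilon_4$,'' gives $\injrad_g\le\epsilon_4$ on $X$. That inference is backwards: $M^{(g)}_{\le\hat\epsilon}\subseteq X$ says the thin part is \emph{inside} $X$, which does nothing to prevent $X$ from also containing points with arbitrarily large injectivity radius. What you actually need is $X\subseteq M^{(g)}_{\le\epsilon_4}$, which is a separate requirement. Moreover your picture is internally inconsistent: you also describe $M\setminus X$ as ``a regular neighborhood of the thick part,'' which would instead force $X\subsetneq M^{(g)}_{\le\hat\epsilon}$; the two descriptions cannot both hold for a domain with smooth boundary. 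The paper resolves this by taking $\Omega=M\setminus X$ to be a \emph{shrinking} of the thick part, namely a smooth approximation of $\{x\in M^{(g)}_{>\hat\epsilon}:\dist_g(x,M^{(g)}_{\le\hat\epsilon})>\kappa\}$, so that every point of $X$ is within distance $\kappa$ of the $\hat\epsilon$-thin set. Then it sets $\hat\epsilon=\epsilon_4/2$ (strictly smaller than $\epsilon_4$, not merely $\le$) and invokes the injectivity-radius control of \cite{CLY81,CGT82} to choose a dimensional $\kappa$ small enough that traversing distance $\kappa$ from a point with $\injrad_g\le\hat\epsilon$ keeps $\injrad_g\le\epsilon_4$. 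Without this buffer-plus-Lipschitz mechanism the hypothesis of Theorem \ref{keyprop}(C) is simply not verified. A smaller remark: you do not need to invoke Lemma \ref{chopping} to produce a product collar before applying Theorem \ref{keyprop}; the theorem's proof performs that chopping internally, so in the paper $\Omega$ is only required to have smooth boundary.
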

\begin{proof}
This proposition is a corollary of Proposition \ref{keyprop} . Let $\hat{\epsilon} = \epsilon_4/2$, where $\epsilon_4$ is as in Proposition \ref{keyprop} . 
Let $(M,g)$ be as in the statement with 
\begin{equation} \label{tt}
\Vol(M^{(g)}_{> \hat{\epsilon}},g) \leq T.
\end{equation}
By \cite{Abresch88}, we can suppose that $g$ is $\mathbf{A}$-regular for some universal sequence $\mathbf{A}$ (we may need to replace $T$ by $2T$ in the inequality above).
In what follows, $C$ is a universal constant. Fix $\kappa\in (0,1)$. Let $\Omega\subset  M^{(g)}_{> \hat{\epsilon}}$ be an open subset with smooth boundary which is contained inside and closely approximates the region
$$ \{x\in M^{(g)}_{> \hat{\epsilon}}; \quad \dist_g(x, M^{(g)}_{\leq \hat{\epsilon}})>\kappa\}$$
and set 
$$X := M\setminus \Omega.$$
By \cite{CLY81,CGT82} we can and do fix $\kappa$ small enough (independently of $(M,g)$) so that 
$$X \subset M^{(g)}_{\leq \epsilon_4}.$$

Choose an arbitrarily small constant $\mu>0$.
We apply Proposition \ref{keyprop} to $X$ to obtain a metric $g'''_\mu$ on $M$ coinciding with $g$ on $\Omega=M\setminus X$, with $|\sec_{g'''_\mu}|\leq C$ and controlled volume in the sense that
$$\Vol(\{x\in X; \quad \injrad_{g'''_\mu}(x)> \mu\}, g'''_\mu) \leq C\Vol(T_{g,1}(\partial X),g).$$
But by Bishop-Gromov inequality, given a point $p\in \partial X=\partial \Omega$, the volume of $B_g(p,1)$ is controlled by the volume of $B_g(p,\kappa)$ which is contained inside $M^{(g)}_{> \hat{\epsilon}}$ by definition of $\Omega$. So (\ref{tt}) implies 
$$\Vol(T_{g,1}(\partial X),g) \leq C \Vol(M^{(g)}_{> \hat{\epsilon}},g)\leq CT,$$
so we get
$$\Vol(M^{(g'''_\mu)}_{>\mu}, g'''_\mu) \leq \Vol(\{x\in X; \quad \injrad_{g'''_\mu}(x)> \mu\}, g'''_\mu) + \Vol(\Omega,g) \leq CT.$$
We conclude by using the definition of $\essminvol(M)$.


\end{proof}

\vspace{3em}

\section{Essential minimal volume of Einstein $4$-manifolds} \label{section4}

\vspace{1em}

\subsection{Preliminaries} \label{marr}
We review some properties of Einstein $4$-manifolds $(M,g)$. 
In \cite{Anderson90,BKN89} it was proved that in any dimension $n$, a bound on the $L^{n/2}$-norm of the Riemann curvature tensor coupled with a non-collapsing condition imply that any limit space of a sequence of manifolds with bounded Ricci curvature is smooth away from finitely many points, at which the singularities are of orbifold type. Part of the proof hinges on an $\epsilon$-regularity theorem \cite{Nakajima88,Anderson89, Gao90}.
Given an Einstein $4$-manifolds $(M,g)$ ,
by the Chern-Gauss-Bonnet formula, the $L^2$-norm of the Riemann curvature tensor is given by the following formula:
\begin{equation} \label{cgb}
\int_M |\Rm_g|^2 dvol_g  = 8\pi^2 \e(M)
\end{equation}
where $ \e(M)$ is the Euler characteristic of $M$.
In \cite{CheegerNaber15}, Cheeger-Naber showed that a family of closed $4$-manifolds with bounded Ricci curvature, with a uniform bound on the diameter and volume, only admits finitely many diffeomorphism types. Though we will not need it, an $\epsilon$-regularity theorem for Einstein metrics holds even without non-collapsing assumptions by \cite{CheegerTian06}.  
\vspace{1em}

We will need a slight generalization of Cheeger-Naber's results for $4$-orbifolds with isolated singularities, the validity of which was explained to me by Aaron Naber. It states that a family of closed $4$-orbifolds with finitely many singularities, with $|\Ric|\leq 3$, uniformly non-collapsed and with a uniform bound on diameter, has only finitely many orbifold diffeomorphism types. In fact, Theorems 1.4 and 1.12 in \cite{CheegerNaber15} continue to hold for this class of orbifolds. One can see this thanks to a few elementary considerations. On a closed $4$-orbifold with finitely many singularities, most classical results of analysis are still true, for instance one can solve the Dirichlet problem on domains with the usual estimates on solutions, the heat kernel exists, Bishop-Gromov inequality holds, etc (see \cite{Farsi01} for instance). If we consider orbifolds with $|\Ric|\leq 3$ then one checks that properties like continuity of volume, metric cone theorems, stratification theorems for non-collapsed limit spaces are true by inspecting the proofs in \cite{Colding97,CheegerColding96,CheegerColdinga,CheegerColdingb,CheegerColdingc}.  (From a more general point of view, closed $n$-orbifolds with $\Ric\geq K$ as well as their limit spaces are  ``non-collapsed'' RCD$(K,n)$-spaces \cite{BKMR18}, so Cheeger-Colding's work generalizes to this setting by \cite{DePhilippisGigli18}.) 
In addition, by Bishop-Gromov's inequality and continuity of volume, the $\epsilon$-regularity theorem (2.8) in \cite{CheegerNaber15} still holds. These observations mean that the proof schemes of \cite{CheegerNaber15} carry over to our situation: for instance in order to show the analogue of Theorems 1.4 in \cite{CheegerNaber15}, it suffices to prove that there is no tangent space in limits spaces that splits off a factor $\mathbb{R}^{4-k}$ but not a factor $\mathbb{R}^{4-k+1}$, for $k=1,2,3$ . Since we assume the singular points to be isolated, for $k=1$ the proof of \cite[Theorem 6.1]{CheegerColdinga} works here, and similarly for $k=2,3$ the proofs of \cite{CheegerNaber15} are still valid.

\vspace{1em}

\subsection{The linear bound} \label{proofff}

In the main result of this section, we show that in dimension $4$, the essential minimal volume of Einstein manifolds can be estimated in terms of the topology. The bound we obtain does not need a non-collapsing condition and  leads to a conjectural topological obstruction for the existence of Einstein metrics on $4$-manifolds. The result can be seen as an extension of the diffeomorphism finiteness theorems \cite[Theorem 0.1]{AndersonCheeger91}, \cite[Theorem 1.12]{CheegerNaber15} to Einstein $4$-manifolds without diameter bound or non-collapsing assumptions.
\vspace{1em}

\begin{theo} \label{mainineq}
There is a constant $C>0$ so that for any smooth closed $4$-manifold $M$ admitting an Einstein metric, 
 \begin{align*}
 C^{-1} \e(M) \leq \essminvol(M) \leq C \e(M)
 \end{align*}
where $\e(M)$ is the Euler characteristic of $M$. 

\end{theo}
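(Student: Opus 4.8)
The lower bound is a restatement of Lemma \ref{euler}, valid for every closed $4$-manifold. For the upper bound, the plan is to reduce, via Proposition \ref{quantify}, to producing for a given Einstein metric $g$ on $M$ a metric $\bar g \in \mathcal{M}_{|\sec|\le 1}(M)$ whose $\hat\epsilon$-thick part has $\bar g$-volume at most $C\e(M)$. I would first normalize $g$ so that $|\Ric_g|\le 3$. The Ricci-flat case is disposed of separately: a flat $M$ has $\e(M)=0$ and $\essminvol(M)\le\minvol(M)=0$ by rescaling the flat metric, while a non-flat compact Ricci-flat $4$-manifold is finitely covered by a $K3$ surface and hence is one of finitely many diffeomorphism types, each carrying a fixed Ricci-flat metric, so $\essminvol(M)\le\minvol(M)\le C\le C\e(M)$ since $\e(M)\ge 12$ there. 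So one may assume $\Ric_g=\lambda g$ with $\lambda\ne 0$, normalized to $\lambda=\pm 3$; then $\Scal_g$ is a nonzero constant and $\int_M|\Rm_g|^2=8\pi^2\e(M)$ by (\ref{cgb}).

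Next I would introduce the curvature radius $\mathbf{r}_g(x):=\sup\{r\in(0,1]:\sup_{B_g(x,r)}|\Rm_g|\le r^{-2}\}$, which varies slowly in $x$ and satisfies the standard Einstein interior estimates $|\nabla^j\Rm_g|\le C_j\mathbf{r}_g^{-2-j}$ on $B_g(x,\mathbf{r}_g(x))$. Using the $\epsilon$-regularity theorem for Einstein $4$-manifolds valid without any non-collapsing hypothesis \cite{CheegerTian06}, a Vitali covering, and (\ref{cgb}), the high-curvature set $\{\mathbf{r}_g<1\}$ is covered by $N\le C\e(M)$ balls $B_i=B_g(x_i,10\mathbf{r}_g(x_i))$ of bounded overlap, with $\Vol_g(B_i)\le C\mathbf{r}_g(x_i)^4$ by Bishop--Gromov, while on $\{\mathbf{r}_g\ge 1\}$ one already has $|\Rm_g|\le 1$. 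I would then set $\bar g:=u\,g$ where $u$ is a smoothed version of $\min(\mathbf{r}_g,1)^{-2}$ (so $\bar g=g$ on $\{\mathbf{r}_g\ge 1\}$ and $\bar g$ is essentially $\mathbf{r}_g(x_i)^{-2}g$ over $B_i$): by the conformal transformation formula (\ref{conformal}), the slow variation of $\mathbf{r}_g$ together with the interior estimates above keep $|\sec_{\bar g}|$ uniformly bounded, and the volume contributed over $\bigcup_i B_i$ is $\le C\sum_i\mathbf{r}_g(x_i)^{-4}\Vol_g(B_i)\le CN\le C\e(M)$. On $\{\mathbf{r}_g\ge 1\}$, where $\bar g=g$ is Einstein with $|\Rm_g|\le 1$ and $|\Scal_g|=12$, the non-collapsed part has volume $\le C\e(M)$ by Cheeger--Naber's finiteness \cite{CheegerNaber15}, in the orbifold form recorded in Subsection \ref{marr}, combined with (\ref{cgb}); the collapsed part, where $\injrad\le\epsilon_4$, is removed by applying Lemma \ref{chopping} followed by the dimension-four uniform collapsing Theorem \ref{keyprop}(C), which produces there a metric agreeing with $\bar g$ near the boundary, with $|\sec|\le C$, and whose $\mu$-thick part has volume bounded (uniformly in $\mu$) by that of a unit neighborhood of the boundary, again $\le C\e(M)$ by Bishop--Gromov. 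A final global rescaling bringing $|\sec|\le 1$, an Abresch smoothing \cite{Abresch88} where needed, and Proposition \ref{quantify} then give $\essminvol(M)\le C\e(M)$.

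The step I expect to be the main obstacle is the coexistence of concentrated curvature and collapse: on a region that is simultaneously high-curvature --- so $g$ is far from satisfying $|\sec|\le 1$ --- and collapsed --- so there is no room to rescale $g$ conformally and the Cheeger--Fukaya--Gromov $N$-structure does not apply to $g$ itself --- one has to pass to local covers of bounded geometry, perform the rescaling equivariantly there, and then collapse the remaining collapsed directions via Theorem \ref{keyprop}(C). Controlling these mixed regions is precisely what forces the dimension-four results (Cheeger--Tian's injectivity radius and curvature estimates, and Theorem \ref{keyprop}(C)), and it is also the reason only $\essminvol$, and not $\minvol$, is controlled here: the saving comes exactly from being permitted to discard collapsed regions rather than having to endow them with a bounded-curvature metric of small volume.
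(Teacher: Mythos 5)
The lower bound and the overall reduction to Proposition \ref{quantify} are correct, and you have correctly identified the crux (curvature concentration in collapsed regions). But your treatment of the Ricci-flat case contains a genuine gap: you assert that a non-flat compact Ricci-flat $4$-manifold is finitely covered by a K3 surface. This is a well-known \emph{open} problem, not a theorem --- it is open whether a compact Ricci-flat $4$-manifold must have special (hyperk\"ahler) holonomy; what is known is only that $\e(M)=0$ forces flatness, and that half-conformal flatness forces K3 or $T^4$. The paper avoids the issue entirely by exploiting scale-invariance of the Ricci-flat equation: one \emph{rescales $g$ so that $\diam\le 1$ and $\Vol\le v$ before running the decomposition}. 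With that normalization Bishop--Gromov gives $\Vol(M,g)\le C$, while $\e(M)\ge 1$ whenever $M$ is non-flat, so the low-curvature part is already under control and the rest of your argument then applies. You should replace the K3 shortcut by this rescaling.

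Away from that gap, your route is genuinely different from the paper's and arguably more direct when $\lambda\ne 0$. The paper's thick-thin decomposition is driven by the \emph{collapsing scale} $r_v$ (the largest scale at which the volume ratio is at least $v$), uses only the non-collapsed $\epsilon$-regularity of Nakajima--Anderson--Gao, and then must invoke Cheeger--Naber (Lemma \ref{ri}) to re-metrize the rescaled balls $\mathcal{R}_k$, whose interiors can have arbitrarily large curvature relative to $r_v^{-2}$. Your decomposition is driven instead by the \emph{curvature scale} $\mathbf{r}_g$, so the conformal factor $\sim\mathbf{r}_g^{-2}$ bounds the sectional curvature of $\bar g$ everywhere and no re-metrization of bodies is needed; the price is that your covering and curvature-radius argument needs Cheeger--Tian's collapsed $\epsilon$-regularity \cite{CheegerTian06}, which the paper explicitly notes it does \emph{not} use. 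Two smaller points: (i) your appeal to Cheeger--Naber to bound the volume of the non-collapsed low-curvature part is both unnecessary and not a volume bound --- for $\lambda\ne 0$ the correct and simpler argument is the pointwise inequality $|\Rm_g|^2\ge\Scal_g^2/24=6$ together with (\ref{cgb}), which gives $\Vol(M,g)\le C\e(M)$ directly; (ii) in the Vitali step you must be a bit careful, since $\epsilon$-regularity gives $\int_{B(x_i,A\mathbf{r}_g(x_i))}|\Rm_g|^2\ge\epsilon$ only for a suitable $A>1$, so the disjoint family must be extracted at scale $A\mathbf{r}_g$; but since you only need $\sum_i\Vol_{\bar g}(B_i)\le CN$ (not bounded overlap), a $5r$-covering argument at that scale suffices.
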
 
\vspace{1em}

\subsection{Thick-thin decomposition for Einstein $4$-manifolds} \label{ttdecomposition}
In this subsection, we introduce in preparation for the proof of Theorem \ref{mainineq}, a thick-thin decomposition adapted to Einstein metrics. We will work with a small constant $v>0$ that will eventually be chosen independent of other parameters, and $C_v$ will denote a constant depending only on $v$, but which may change from line to line. Recall that for a metric, $|\sec|$ controls $|\Rm|$ and vice versa.

The lower bound in the statement of Theorem \ref{mainineq} is true generally, and follows from Lemma \ref{euler}. Thus, we only need to prove the upper bound.
Our proof will be based on a covering argument. Let $g$ be a metric on the closed smooth $4$-manifold $M$ with 
$$\Ric_g = \lambda g, \quad \lambda \in \{-3,0,3\} .$$  
Fix $v>0$ and $\epsilon>0$  small real numbers; $v$ will be chosen as a universal constant and $\epsilon$ depends only on $v$;  they will be fixed in Subsection \ref{end of proof}. 
It is useful to introduce a certain decomposition of $(M,g)$.
We define the collapsing scale $r_v$ as follows for any $p\in M$:
\begin{equation} \label{not1}
r_v(p):=\sup\{r\in(0,1]; \quad \Vol(B(p,r),g) \geq v r^4\}. 
\end{equation}
The function $r_v(.)$ is a positive continuous function.

\vspace{1em}

The thick-thin decomposition is defined as follows:
$$M= M^{[>\epsilon]}  \sqcup M^{[\leq \epsilon]} ,$$
where 
 \vspace{1em}
 \begin{equation} \label{not2}
 \begin{split}
M^{[>\epsilon]} & := \{x\in M; \quad \int_{B_g(x,\frac{r_v(x)}{24})} |\Rm_g|^2 dvol_g > \epsilon\},\\
 \vspace{1em}
M^{[\leq \epsilon]} & := \{x\in M; \quad \int_{B_g(x,\frac{r_v(x)}{24})} |\Rm_g|^2 dvol_g \leq \epsilon\}.
\end{split}
\end{equation}
\vspace{1em}


The choice of factor for the radius $\frac{r_v(x)}{24}$ is for technical convenience. If $g$ is Ricci-flat we \emph{always rescale it} so that its volume is less than $v$ and its diameter less than $1$. 

Then, regardless of the sign of the Ricci curvature, if we choose first $v$ small enough and $\epsilon$ small enough compared to $v$, one can check than at any point $x$ of the thin part, $M^{[\leq \epsilon]}$, we have $r_v(x)<1$, which in particular means by (\ref{not1}) that 
 \begin{equation} \label{not3}
 \Vol(B_g(x,r_v(x)),g) =v r_v(x)^n.
 \end{equation}
Moreover the injectivity radius on $M^{[\leq \epsilon]}$ is small compared to the curvature because by the $\epsilon$-regularity theorem \cite{Nakajima88,Anderson89,Gao90}: 
\begin{equation} \label{smol}
 \forall x\in M^{[\leq \epsilon]}, \quad \sup \{|\sec_x|^{1/2} ; \quad x\in B_g(x,\frac{r_v(x)}{48})\} \leq \frac{1}{r_v(x)} 
\end{equation}
and so near points of $M^{[\leq \epsilon]}$ the geometry is locally collapsed and carries an $N$-structure. This is how, given $v$, the constant $\epsilon$ is chosen with respect to $v$.

 
Let $\hat{\epsilon}>0$ be as in Proposition \ref{quantify}. 
By Chern-Gauss-Bonnet (\ref{cgb}) and Proposition \ref{quantify}, in order to prove the  upper bound in Theorem \ref{mainineq}, it suffices to show the following:

\begin{theo} \label{reduction}
The constants $v$, $\epsilon$ can be chosen small enough so that if $(M,g)$ is as above, there is a metric $\underline{\mathbf{g}}$ on $M$ with $|\sec_{\underline{\mathbf{g}}}|\leq 1$ and 
domains $\mathcal{W}\subset M$, $\mathcal{V}:=M\setminus \mathcal{W}$, such that
$$M^{[> \epsilon]} \subset \mathcal{W},$$
$$\mathcal{V} \subset M^{(\underline{\mathbf{g}})}_{ \leq \hat{\epsilon}},$$
$$\Vol(\mathcal{W}, \underline{\mathbf{g}}) \leq C\int_{M} |\Rm_g|^2 dvol_g$$
for some universal constant $C$.

\end{theo}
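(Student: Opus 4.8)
The plan is to produce $\underline{\mathbf{g}}$ by a single global conformal rescaling of the Einstein metric $g$ and then to take $\mathcal{W}:=M^{[>\epsilon]}$, $\mathcal{V}:=M^{[\le\epsilon]}$. The rescaling is by the reciprocal square of a smoothed version of $\varphi_0:=r_v\wedge r_{\Rm}$, where $r_v$ is the collapsing scale \eqref{not1} and $r_{\Rm}$ is the \emph{curvature radius} $r_{\Rm}(x):=\sup\{r\in(0,1];\ \sup_{B_g(x,r)}|\Rm_g|\le r^{-2}\}$. By \eqref{smol}, on the thin part one has $r_{\Rm}\ge c_0\,r_v$ for a dimensional $c_0$, so $\varphi_0\sim r_v$ there; on the thick part $r_{\Rm}$ may be far below $r_v$ (this is exactly where curvature concentrates), and $\varphi_0\sim r_{\Rm}$. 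Using $\varphi_0$ rather than $r_v$ alone is what lets the rescaling absorb the possibly large curvature on $M^{[>\epsilon]}$ while still collapsing the injectivity radius on $M^{[\le\epsilon]}$.

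First I would check that $r_v$ and $r_{\Rm}$ are ``multiplicatively Lipschitz at their own scale'' ($r(x)\le 2r(y)$ whenever $\dist_g(x,y)\le r(y)/2$): for $r_{\Rm}$ this is immediate from the definition, and for $r_v$ it follows from Bishop--Gromov and \eqref{not3}. Since $g$ is Einstein, elliptic regularity in harmonic coordinates makes $r_{\Rm}(x)^{-2}g$ uniformly $\mathbf{A}$-regular on $B_g(x,r_{\Rm}(x)/2)$, so one can mollify $\varphi_0$ at scale $\sim\varphi_0$ to obtain a smooth $\varphi$ with $\tfrac12\varphi_0\le\varphi\le\varphi_0$ and $|\nabla^k_g\varphi|_g\le C_k\,\varphi^{1-k}$ for dimensional $C_k$. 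Put $\underline{\mathbf{g}}:=C_0\varphi^{-2}g$. From the conformal curvature formula \eqref{conformal}, together with $\varphi^2|\Rm_g|_g\le 1$ (because $|\Rm_g(x)|_g\le r_{\Rm}(x)^{-2}$ and $\varphi\le r_{\Rm}$), $|\nabla_g\varphi|_g\le C$ and $\varphi^2|\Hess_g\log\varphi|_g\le C$, one gets $|\sec_{\underline{\mathbf{g}}}|\le C^{\ast}C_0^{-1}$; choosing $C_0:=C^{\ast}$ gives $|\sec_{\underline{\mathbf{g}}}|\le 1$ on all of $M$.

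Next I would verify $\mathcal{V}=M^{[\le\epsilon]}\subset M^{(\underline{\mathbf{g}})}_{\le\hat\epsilon}$ and the volume bound. For the first: on $M^{[\le\epsilon]}$ the metric $r_v(x)^{-2}g$ has $|\sec|\le 1$ on a ball of dimensional radius and volume $\le v$ by \eqref{not3}, so Günther's comparison forces $\injrad_g\le\psi(v)\,r_v$ with $\psi(v)=(v/c_n)^{1/n}$; since $\varphi\sim r_v$ is slowly varying this rescales to $\injrad_{\underline{\mathbf{g}}}\le C\psi(v)$ on $M^{[\le\epsilon]}$, so it suffices to fix $v=v(n)$ small enough that $C\psi(v)<\hat\epsilon$, and then $\epsilon=\epsilon(v)$ small enough for \eqref{smol} to hold. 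For the volume bound, $\Vol(\mathcal{W},\underline{\mathbf{g}})=C_0^2\int_{M^{[>\epsilon]}}\varphi^{-4}\,dvol_g$, and I split $M^{[>\epsilon]}$ into $\{r_{\Rm}\ge\tfrac12 r_v\}$ and $\{r_{\Rm}<\tfrac12 r_v\}$. On the first set $\varphi\sim r_v$; a Vitali cover by balls $B_g(x_i,r_v(x_i)/10)$ of uniformly bounded multiplicity has $N\le C_\epsilon\int_M|\Rm_g|^2$ members, because by \eqref{not2} the concentric balls $B_g(x_i,r_v(x_i)/24)$ each carry $L^2$-curvature $>\epsilon$, and $\int_{B_i}\varphi^{-4}\,dvol_g\lesssim r_v(x_i)^{-4}\Vol(B_i,g)\lesssim 1$ by \eqref{not3} and Bishop--Gromov. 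On the second set $\varphi\sim r_{\Rm}$; covering by balls $B_g(y_j,r_{\Rm}(y_j)/10)$ of bounded multiplicity, the identity $\sup_{\overline{B_g(y_j,r_{\Rm}(y_j))}}|\Rm_g|=r_{\Rm}(y_j)^{-2}$ (valid since $r_{\Rm}(y_j)<1$) combined with the $\epsilon$-regularity theorem of \cite{Nakajima88,Anderson89,Gao90} forces $\int_{B_g(z_j,\,r_{\Rm}(y_j)/10)}|\Rm_g|^2\ge c_0'>0$ for a point $z_j$ realizing the sup, so there are $\le C\int_M|\Rm_g|^2$ such balls, while again $\int_{B_j}\varphi^{-4}\,dvol_g\lesssim r_{\Rm}(y_j)^{-4}\Vol(B_g(y_j,r_{\Rm}(y_j)),g)\lesssim 1$. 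Adding both contributions and using Chern--Gauss--Bonnet \eqref{cgb}, $\int_M|\Rm_g|^2=8\pi^2\e(M)$, to absorb the $\e(M)$ coming from $N$, gives $\Vol(\mathcal{W},\underline{\mathbf{g}})\le C\int_M|\Rm_g|^2$. (Alternatively the thin part can be collapsed directly with Theorem~\ref{keyprop}(C) after chopping via Lemma~\ref{chopping}, which is the route indicated in the outline; the thick-part estimate above is in the spirit of the $\epsilon$-regularity theory of Anderson, Bando--Kasue--Nakajima and Cheeger--Naber.)

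I expect the main obstacle to be the volume estimate on $M^{[>\epsilon]}$, and within it the need to run two Vitali coverings — one at the collapsing scale $r_v$ keyed to the definition \eqref{not2}, one at the curvature scale $r_{\Rm}$ keyed to $\epsilon$-regularity — with both multiplicities and per-ball volumes bounded by dimensional constants. This is precisely what the multiplicative-Lipschitz properties of $r_v,r_{\Rm}$ and the scale-invariant bounds $|\nabla^k\varphi|_g\le C_k\varphi^{1-k}$ are designed to supply; once those are established, the rest is bookkeeping with \eqref{conformal} and \eqref{cgb}. A secondary point requiring care is that $v$, $\epsilon$, the mollification scale, and $C_0$ must be fixed in a non-circular order ($v=v(n)$, then $\epsilon=\epsilon(v)$, then the mollification and $C_0$), which the scheme above respects.
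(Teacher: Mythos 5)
Your proposal takes a genuinely different route from the paper's. Rather than the paper's decomposition-and-surgery strategy — cover $M^{[>\epsilon]}$ by $r_v$-balls (Lemma~\ref{recouvre}), isolate annuli of bounded curvature around each (Lemma~\ref{prep}), conformally rescale by $r_v^{-2}$ only away from the cores (Lemma~\ref{h0}), \emph{replace} the metrics on the cores outright via Cheeger--Naber diffeomorphism finiteness (Lemma~\ref{ri}), and glue — you propose a single global conformal rescaling by $(r_v\wedge r_{\Rm})^{-2}$. This handles the curvature bound cleanly everywhere, since $\varphi\le r_{\Rm}$ forces $\varphi^2|\Rm_g|\le 1$, and in particular dispenses with the metric surgery of Lemma~\ref{ri}. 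But the crux is then the volume bound $\int_{M^{[>\epsilon]}}\varphi^{-4}\lesssim\int_M|\Rm_g|^2$, and there is a genuine gap here.

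The gap: you repeatedly invoke ``Vitali covers of uniformly bounded multiplicity'' at scales $r_v$ and $r_{\Rm}$, but bounded multiplicity of a variable-scale cover is not elementary in this setting; Besicovitch-type covering does not hold in manifolds with merely bounded Ricci curvature when the radii range over many scales. The paper's Lemma~\ref{recouvre} establishes bounded multiplicity at scale $r_v$ precisely by combining the Einstein equation, the ``volume cone implies metric cone'' rigidity of Cheeger--Colding, and the codimension-4 theorem of Cheeger--Naber: roughly, if too many balls of varying sizes contained a common point, some intermediate ball would be close to a smooth flat cone, contradicting the curvature concentration $\int_{B_g(x_i,r_v(x_i)/24)}|\Rm_g|^2>\epsilon$. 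An analogous bounded-multiplicity statement at the \emph{curvature} scale $r_{\Rm}$ on $\{r_{\Rm}<r_v/2\}\cap M^{[>\epsilon]}$ — where $r_{\Rm}$ can be arbitrarily small relative to $r_v$, so nested balls at many scales are possible — would need a similar and equally delicate rigidity argument, which you do not supply (and which the paper never proves, since it bypasses the $r_{\Rm}$-scale covering entirely through Lemma~\ref{ri}). A Vitali extraction only gives pairwise disjointness of the chosen balls; that alone does not yield the bounded multiplicity of the \emph{dilated} balls $B_g(y_j,2r_{\Rm}(y_j))$ that your counting $Q\lesssim\int_M|\Rm_g|^2$ requires, because the 1-Lipschitz bound $|r_{\Rm}(x)-r_{\Rm}(y)|\le\dist_g(x,y)$ controls the radii only from below when a common point is much deeper than $r_{\Rm}$ inside some ball. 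A secondary imprecision: your $\epsilon$-regularity step places curvature energy in a ball $B_g(z_j,r_{\Rm}(y_j)/10)$ around the curvature maximum $z_j$, but $z_j$ typically lies near $\partial B_g(y_j,r_{\Rm}(y_j))$, so these balls do not inherit the disjointness of $B_g(y_j,r_{\Rm}(y_j)/10)$ without further argument, and you also need the sharp $L^2$-normalized form of $\epsilon$-regularity (with $\epsilon_0^{1/2}$ on the right-hand side) rather than the qualitative version. In short, the conceptual idea of rescaling by $r_v\wedge r_{\Rm}$ is appealing and could be made to work, but your blind ``bounded multiplicity'' assertion papers over precisely the step where the real work of the paper's proof — Lemma~\ref{recouvre} and its use of Cheeger--Naber rigidity — lies.
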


In the above statement, as usual, 
$$M^{(\underline{\mathbf{g}})}_{ \leq \hat{\epsilon}}:= \{x\in M;\quad \injrad_{\underline{\mathbf{g}}}(x)\leq \hat{\epsilon}\}.$$ 
To construct such a metric, we will roughly speaking first find a metric $\underline{\mathbf{g}}_1$ on a smooth approximation $\mathcal{W}$ of $M^{[> \epsilon]}$ with controlled volume, then a second metric $\underline{\mathbf{g}}_2$ on the complement $\mathcal{V}$ of $\mathcal{W}$ which is very collapsed, and we will glue these two metrics together.

\vspace{1em}

\subsection{Some covering lemmas}

If $B$ is a geodesic open ball and $\lambda$ a positive number, denote by $\lambda B$ the ball with same center and radius $\lambda$ times larger. Given a family of balls $\mathcal{F}$, we define its ``multiplicity'' to be the maximal number of different balls in $\mathcal{F}$ whose intersection is non-empty. For manifolds with bounded Ricci curvature, the usual Besicovitch covering lemma does not hold with a uniform constant, however the next lemma serves as a replacement in our case:

\begin{lemme} \label{recouvre}
There are finitely many points $x_1,...,x_{L'} \in M^{[>\epsilon]}$ such that 
$$M^{[> \epsilon]} \subset \bigcup_{i=1}^{L'}  B_g(x_i,\frac{r_v(x_i)}{3}) ,$$
and the multiplicity of the family $\{B_g(x_i,r_v(x_i))\}_{i=1}^{L'}$ is bounded from above by some $C_v$ depending only on $v$. 
\end{lemme}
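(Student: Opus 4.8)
The plan is to deduce the lemma from a Vitali-type covering combined with a Bishop--Gromov packing estimate; the whole point will be that the collapsing-scale normalization $\Vol(B_g(x,r_v(x)),g)\geq v\,r_v(x)^4$ is exactly what forces every constant to depend only on $v$.

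For the covering and the finiteness of $L'$, I would first note that $r_v(\cdot)$, being positive and continuous on the compact manifold $M$, is bounded below by some $r_0>0$, and then apply the standard $5r$-covering lemma to the family $\{B_g(x,r_v(x)/15):x\in M^{[>\epsilon]}\}$, whose radii are $\leq 1/15$. This yields points $x_1,\dots,x_{L'}$ with the balls $B_g(x_i,r_v(x_i)/15)$ pairwise disjoint and $M^{[>\epsilon]}\subset\bigcup_i B_g(x_i,5\cdot r_v(x_i)/15)=\bigcup_i B_g(x_i,r_v(x_i)/3)$. By Bishop--Gromov (using $\Ric_g\geq -3$ and $r_v(x_i)\leq 1$) and the normalization, $\Vol(B_g(x_i,r_v(x_i)/15),g)\geq c\,v\,r_v(x_i)^4\geq c\,v\,r_0^4$ for a dimensional $c>0$, so $L'\leq\Vol(M,g)/(c\,v\,r_0^4)<\infty$.

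To bound the multiplicity, I would fix $q\in M$, set $J:=\{i:\ q\in B_g(x_i,r_v(x_i))\}$, and split $J$ according to the dyadic size of $r_v(x_i)$. For a fixed integer $k\geq 0$, any $i\in J$ with $r_v(x_i)\in(2^{-k-1},2^{-k}]$ has $x_i\in B_g(q,2^{-k})$; the disjoint balls $B_g(x_i,r_v(x_i)/15)$ then each have volume $\geq c\,v\,2^{-4k}$ and are all contained in $B_g(q,\tfrac{16}{15}2^{-k})$, a ball of radius $<2$ and hence of volume $\leq C\,2^{-4k}$ by the Bishop--Gromov upper bound, so at most $C/(c\,v)=:C_v$ indices $i\in J$ occur in each dyadic range. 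Thus $|J|$ is at most $C_v$ times the number of dyadic ranges met by $\{r_v(x_i):i\in J\}$.

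The remaining step, which I expect to be the genuine difficulty, is to bound that number of dyadic ranges --- equivalently $\log_2\!\big(\max_{i\in J}r_v(x_i)/\min_{i\in J}r_v(x_i)\big)$ --- by a constant depending only on $v$. Under $\Ric_g\geq -3$ alone this is false: a thin collapsing neck attached to a large region lets $r_v$ change by an arbitrary factor over a bounded distance, and Bishop--Gromov detects nothing. So here the Einstein hypothesis has to be used, through the $\varepsilon$-regularity theorem (\ref{smol}): on the thin part it gives $|\sec_g|\leq r_v^{-2}$, and a volume comparison --- using that $v$ has been taken small --- shows that there $r_v$ is comparable, up to $v$-dependent factors, to the curvature scale $x\mapsto\sup\{r:\ \sup_{B_g(x,r)}|\Rm_g|\leq r^{-2}\}$, which is $1$-Lipschitz and therefore slowly varying; at thick points where the curvature scale is much smaller than $r_v$ one additionally uses that the balls $B_g(x_i,r_v(x_i)/24)\subset B_g(x_i,r_v(x_i)/15)$ are disjoint and each absorb a definite portion of the finite total curvature $\int_M|\Rm_g|^2=8\pi^2\e(M)$, which forbids an arbitrarily deep nesting of scales around $q$. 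Assembling this control of the variation of $r_v$ along $J$ with the per-scale bound of the previous paragraph then gives $|J|\leq C_v$.
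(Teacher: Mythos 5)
Your Vitali-covering setup and the per-dyadic-scale multiplicity count are correct and a bit cleaner than the paper's own dyadic construction (the paper builds a covering scale by scale and then prunes balls contained in twice another ball; your disjoint $B_g(x_i,r_v(x_i)/15)$ gives a slightly stronger disjointness). Those two paragraphs are fine. But the third step, which you correctly flag as the genuine difficulty, does not close, and the reasons are substantive.

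First, the $L^2$-absorption argument gives the wrong kind of bound. If $N$ of the disjoint balls $B_g(x_i,r_v(x_i)/24)$ meet at wildly separated scales near $q$, each carrying $>\epsilon$ of $\int|\Rm_g|^2$, all you conclude is $N\leq \int_M|\Rm_g|^2/\epsilon = C_v\,\e(M)$. That bound depends on the Euler characteristic of $M$, not just on $v$, so it cannot produce the $C_v$ the lemma requires. (The paper does use exactly this estimate later --- it is equation (\ref{L}), giving $L\leq C_v\int_M|\Rm_g|^2$ --- but that is a bound on the \emph{total} number of balls, not on the multiplicity.) Second, the claimed comparability ``$r_v$ is comparable, up to $v$-dependent factors, to the curvature scale'' is only one-sided: the $\epsilon$-regularity theorem (\ref{smol}) gives $r_{|\Rm|}\gtrsim r_v$ on the \emph{thin} part, and the $\epsilon$-threshold gives $r_{|\Rm|}\lesssim_v r_v$ on the \emph{thick} part where the $x_i$ live, but neither gives a two-sided comparison, and a flat collapsed region has $r_{|\Rm|}$ large and $r_v$ small. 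So the $1$-Lipschitz curvature scale does not control the multiplicative variation of $r_v$ along $J$.

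What actually closes the gap in the paper is a blow-up contradiction using the non-collapsed Einstein structure theory: if $N$ nested balls around $q$ have widely separated scales, Bishop--Gromov monotonicity of $\Vol(B_g(y_1,r))/r^4$ forces the volume ratio to be almost constant across some pair of consecutive scales; by Cheeger--Colding's ``volume cone implies metric cone'' and Cheeger--Naber's codimension-four theorem (which applies precisely because $g$ is Einstein and the region is $v$-noncollapsed at that scale), the rescaled annulus converges to a smooth flat cone; and this contradicts $\int_{B_g(y_{j_N},\,r_v(y_{j_N})/24)}|\Rm_g|^2>\epsilon$ at the intermediate center. This is what makes the final constant depend only on $v$ (and $n=4$). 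Without invoking ``volume cone $\Rightarrow$ metric cone'' together with the codimension-four theorem, I do not see how to get a $v$-only multiplicity bound, and your sketch does not supply a substitute.
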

\begin{proof}
Let $\tilde{\mathcal{F}}_k$ denote the set of  balls of the form $B_g(x,\frac{1}{6}r_v(x))$ with 
$$x\in M^{[> \epsilon]} $$ and radius $\frac{1}{6}r_v(x)$ in $(2^{-(k+1)},2^{-k}]$. For any $k\geq 0$, the set 
$$M^{[> \epsilon]} \cap \bigcup_{B\in \tilde{\mathcal{F}}_k} B  $$ 
can be covered by a family of balls $\tilde{G}_k \subset \tilde{\mathcal{F}}_k$ with uniformly bounded multiplicity by Bishop-Gromov (each point belongs to at most $c_{\text{univ}}$ balls of $\tilde{G}_k$ for a universal constant $c_{\text{univ}}$). Consider the family of balls $\bigcup_k \tilde{G}_k$, and remove from it any ball $B$ that is contained in $2B'$ for some ball $B'\neq B$ in $\bigcup_k \tilde{G}_k$. By doubling the size of each remaining ball, this yields a new family $\tilde{\mathcal{H}}$ of balls of the form $B_g(x,\frac{1}{3}r_v(x))$. Let $\tilde{\mathcal{H}}'$ be the family of balls $3B$, where $B\in \tilde{\mathcal{H}}$. Balls of $\tilde{\mathcal{H}}'$ are thus of the form $B_g(x,r_v(x))$.
\vspace{1em}

\textbf{Claim:} $\tilde{\mathcal{H}}$ is a covering of $M^{[> \epsilon]}$ and $\tilde{\mathcal{H}}'$ has bounded multiplicity.
\vspace{1em}

Let us check this claim, which would conclude the proof. The fact that $\tilde{\mathcal{H}}$ is a cover of $M^{[> \epsilon]}$ is clear from its construction and the identity $$M^{[> \epsilon]}  = \bigcup_k \Big( M^{[> \epsilon]} \cap \bigcup_{B\in \tilde{\mathcal{F}}_k} B \Big).$$ Suppose that the multiplicity of $\tilde{\mathcal{H}}'$ cannot be chosen uniformly bounded, i.e. it cannot be chosen bounded from above independently of $(M,g)$. Then for any integer $N>0$, there is such a $(M,g)$ such that the family $\tilde{\mathcal{H}}'$ constructed above has multiplicity at least $N$ at some point $x\in M^{[> \epsilon]} $,  in other words there are different balls $B_1,...,B_N\in \tilde{\mathcal{H}}'$ containing $x$. Let $y_1,...y_N$ be the respective centers of $B_1,...,B_N$. We assume without loss of generality that the radii are in non-decreasing order 
$$\rad B_1 \leq \rad B_2...\leq \rad B_N.$$ 
Since by construction, no $\frac{1}{6}B_i$ is contained in another $\frac{1}{3}B_j$ ($i\neq j$), by Bishop-Gromov inequality and by extracting a subsequence of $\{B_i\}$ if necessary, we can assume that 
$$  2^{-1}\dist_g(y_1,y_2)< 2^{-2} \dist_g(y_1,y_3)... < 2^{-(N-1)} \dist_g(y_1,y_N),$$
\begin{equation} \label{added3}
\rad B_1 <2^{-1} \rad B_2 ...< 2^{-(N-1)} \rad B_N,
\end{equation}
and moreover 
$$ C_v^{-1} \rad B_j \leq \dist_g(y_1,y_j) \leq C_v\rad B_j \quad \forall j\in \{1,..,N\}.$$
We can also make sure that $\rad B_{j_N} \to 0$ as $N\to \infty$.
But since 
$$r_v(y_j) =  \rad B_j,$$ $B_g(y_1,\dist_g(y_1,y_N))$ is not too collapsed:
$$\frac{\Vol(B_g(y_1,\dist_g(y_1,y_N)),g)}{\dist_g(y_1,y_{N})^4} \geq C_v^{-1}>0,$$
therefore by applying Bishop-Gromov monotonicity formula again, the difference of the volume ratios 
$$\frac{\Vol(B_g(y_1,\dist_g(y_1,y_{j_{N-1}})),g)}{\dist_g(y_1,y_{j_{N-1}})^4} - \frac{\Vol(B_g(y_1,\dist_g(y_1,y_{j_{N+1}})),g)}{\dist_g(y_1,y_{j_{N+1}})^4}$$
is arbitrarily small for some $j_N\in \{3,...,N-1\}$ if $N$ is chosen large enough. By a blow-up and compactness argument, by the ``volume cone implies metric cone" theorem \cite[Theorem 4.91]{CheegerColding96}, and since in dimension $4$ the singular set has codimension $4$ \cite[Theorem 1.4]{CheegerNaber15}, the ball 
$$\frac{1}{4} B_g(y_{j_N},\dist_g(y_1,y_{j_N}))$$ 
endowed with the metric $\dist_g(y_1,y_{j_N})^{-2} g$ would be arbitrarily close (in the $C^{\infty}$ topology by the Einstein condition) to a smooth open domain of a metric cone $\mathfrak{C}$ which is smooth outside its tip if $N$ is large enough. The smooth part of this $4$-dimensional cone $\mathfrak{C}$ is Ricci flat so actually it is flat. Since $y_{j_N}\in M^{[> \epsilon]} $, we have
$$\int_{B_g(y_{j_N}, \frac{1}{24} r_v(y_{j_N}))} |\Rm_g|^2 >\epsilon.$$
But for large $N$, since by construction $\frac{1}{6} B_g(y_1,r_v(y_1))$ is not included inside $\frac{1}{3}B_g(y_{j_N},r_v(y_{j_N}))$ and since  (\ref{added3}) holds, we have
$$  B_g(y_{j_N},\frac{1}{24}r_v(y_{j_N}))\subset   \frac{1}{4} B_g(y_{j_N},\dist_g(y_1,y_{j_N}))$$
so that would  eventually contradict the flatness of the cone $\mathfrak{C}$ as $N\to \infty$, by the $\epsilon$-regularity theorem.

\end{proof}

Let $\{x_i\}_{i=1}^{L'}$ be as in the previous lemma.
In following lemma we find annuli with bounded curvature inside $B_g(x_i,r_v(x_i))$.

\begin{lemme} \label{prep}
There are constants $\frac{1}{1000} >\delta_1>\delta_2>0$ and $C_v$ depending only on $v$ such that one can choose for each $i\in \{1,\dots,L'\}$ a radius 
$$r_i\in ((\frac{1}{2} - \delta_1)r_v(x_i),(\frac{1}{2}+\delta_1)r_v(x_i))$$
satisfying the following: at any point 
$$x\in B_{g}(x_i, r_i+\delta_2r_v(x_i)) \setminus B_{g}(x_i,r_i-\delta_2r_v(x_i)),$$
we have
\begin{equation} \label{curvaturebound}
r_v(x_i)^2 |\Rm_{g}(x)| \leq C_v,
\end{equation}
\begin{equation} \label{injradbound}
\injrad_g(x) \geq \delta_2r_v(x_i).
\end{equation}
\end{lemme}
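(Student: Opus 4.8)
The plan is to argue by contradiction, using the compactness and $\epsilon$-regularity theory for non-collapsed Einstein (more generally, bounded-Ricci) $4$-manifolds recalled in Subsection~\ref{marr}. After rescaling the metric by $r_v(x_i)^{-2}$ we may assume $r_v(x_i)=1$, so that $g$ becomes Einstein with $|\Ric_g|\le 3$ and $\Vol(B_g(x_i,1),g)\ge v$; since~(\ref{curvaturebound}) and~(\ref{injradbound}) are scale invariant it then suffices to find $r_i\in(\tfrac12-\delta_1,\tfrac12+\delta_1)$ with $|\Rm_g|\le C_v$ and $\injrad_g\ge\delta_2$ on the annulus $B_g(x_i,r_i+\delta_2)\setminus B_g(x_i,r_i-\delta_2)$. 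I would fix $\delta_1$ to be a small universal constant, say $\delta_1=\tfrac1{2000}$, and produce $\delta_2=\delta_2(v)$ and $C_v$ by a compactness argument. One preliminary point: non-collapsing propagates. Since $B_g(x_i,1)\subset B_g(x,2)$ for every $x\in B_g(x_i,1)$, Bishop--Gromov gives $c(v)>0$ with $\Vol(B_g(x,s),g)\ge c(v)s^4$ for all $x\in B_g(x_i,\tfrac34)$ and all $s\le\tfrac14$, so the whole region we work in is uniformly non-collapsed at definite scales; this is exactly what lets the $\epsilon$-regularity theorem \cite{Nakajima88,Anderson89,Gao90} and Cheeger's injectivity radius estimate \cite{Cheeger70} apply there with constants depending only on $v$.

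For the contradiction I would suppose no admissible pair $(\delta_2,C_v)$ exists and extract a sequence $(M_j,g_j,p_j)$ of pointed Einstein $4$-manifolds with $|\Ric_{g_j}|\le 3$ and $\Vol(B_{g_j}(p_j,1),g_j)\ge v$ such that for \emph{every} $r\in(\tfrac12-\delta_1,\tfrac12+\delta_1)$ the annulus $B_{g_j}(p_j,r+\tfrac1j)\setminus B_{g_j}(p_j,r-\tfrac1j)$ contains a point where $|\Rm_{g_j}|>j$ or $\injrad_{g_j}<\tfrac1j$. Passing to a subsequence, $(M_j,g_j,p_j)$ converges in the pointed Gromov--Hausdorff topology to a limit $(X,d,p_\infty)$ which, by \cite{Anderson90,BKN89,AndersonCheeger91,CheegerColding96,CheegerNaber15}, near $p_\infty$ is a smooth Einstein manifold away from a finite set $\mathcal S\subset\overline{B(p_\infty,\tfrac34)}$ of orbifold points, with $C^\infty$ convergence on the regular part $X\setminus\mathcal S$. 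The finitely many values $\dist(p_\infty,s)$, $s\in\mathcal S$, then miss some $\rho$ in the window $(\tfrac12-\delta_1,\tfrac12+\delta_1)$ together with a neighbourhood of definite size $\eta>0$, so the annulus $B(p_\infty,\rho+\eta)\setminus B(p_\infty,\rho-\eta)$ lies in $X\setminus\mathcal S$; writing it in harmonic coordinates, the $C^\infty$ convergence gives $|\Rm_{g_j}|\le C_0$ on the corresponding annulus in $M_j$ for $j$ large, and the propagated non-collapsing of the first step together with Cheeger's lemma gives $\injrad_{g_j}\ge\eta_0>0$ there. Choosing $j$ so large that $\tfrac1j<\min(\eta,\eta_0)$ and $j>C_0$ and taking the radius $\rho$ contradicts the defining property of $(M_j,g_j,p_j)$; this yields $\delta_2=\delta_2(v)$ and $C_v$, and undoing the rescaling by $r_v(x_i)$ completes the proof.

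The step I expect to be the main obstacle is controlling the singular set $\mathcal S$ of the limit, specifically knowing that $\mathcal S\cap\overline{B(p_\infty,\tfrac34)}$ is \emph{finite}: note that we have no a priori bound on $\int_{B_g(x_i,1)}|\Rm_g|^2$, since that ball may carry an arbitrarily large amount of curvature. This is precisely where the Einstein condition is indispensable --- tangent cones of non-collapsed Einstein $4$-dimensional limits are flat cones $\mathbb R^4/\Gamma$ with $\Gamma$ a nontrivial finite group acting freely on $S^3$, which forces $\mathcal S$ to be discrete and hence finite in a compact region (equivalently, each orbifold point absorbs a definite quantum of $L^2$-curvature along the sequence, so only finitely many occur near $p_\infty$). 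These finiteness constants need not be uniform in $v$, because they are used only for $j$ large depending on the limit; this is why one is forced into the roundabout compactness route rather than a direct coarea/covering estimate, which would only bound the measure of ``bad'' radii by a multiple of $\int_{B_g(x_i,1)}|\Rm_g|^2$ and hence fail when that energy is large. Finally, the injectivity radius bound~(\ref{injradbound}) is not an independent difficulty: once the curvature bound on a slightly larger annulus is in hand, it follows from Cheeger's lemma and the non-collapsing of the first step.
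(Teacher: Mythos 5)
Your argument is correct and follows essentially the same route as the paper: rescale so that $r_v(x_i)=1$, argue by contradiction, use the Cheeger--Naber compactness/codimension-four theory for non-collapsed $4$-manifolds with bounded Ricci curvature to obtain a Gromov--Hausdorff limit that is smooth away from finitely many points, choose a radius in the window avoiding those singularities, and apply $\epsilon$-regularity (plus Bishop--Gromov non-collapsing propagation and Cheeger's lemma) to contradict the failure of the bounds for large $j$. The only small inaccuracy is the claim that the Einstein condition is ``indispensable'' for finiteness of $\mathcal S$: Cheeger--Naber's results apply to merely bounded Ricci and in fact \emph{deliver} the a priori $L^2$ curvature bound in dimension $4$, which is what makes the ``quantum of energy'' remark non-circular, and indeed the paper's proof runs under $|\Ric|\le n-1$ rather than the Einstein hypothesis.
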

\begin{proof}
We argue by contradiction: assume that there is a sequence of $4$-manifolds of the form $B_{g_k}(y_k,R_k)$ such that $g_k$ is a metric with $|\Ric_{g_k}|\leq n-1$, $R_k\leq 1$ and $\Vol_{g_k}(B_{g_k}(y_k,R_k)) \geq vR_k^4$, such that the conclusion of the lemma fails. By rescaling one can assume that $R_k=1$, and so we are supposing that one {cannot} find $\delta_1>\delta_2>0$ and $C_v$ such that for some radius
$$\bar{r} \in (\frac{1}{2}-\delta_1,\frac{1}{2}+\delta_1),$$
at any point 
$$x\in B_{g_k}(y_k,\bar{r} +\delta_2) \setminus   B_{g_k}(y_k,\bar{r} -\delta_2),$$
the bounds $|\Rm_{g_k}(x)|\leq C_v$ and $\injrad_{g_k}(x) \geq \delta_2$ hold.

By Cheeger-Naber \cite[Theorem 1.4, Theorem 8.6, Corollary 8.85]{CheegerNaber15} however, a subsequence of the pointed balls $(B_{g_k}(y_k,\frac{3}{4}), g_k,y_k)$ (which are all $v$-noncollapsed) converge in the Gromov-Hausdorff topology to a metric space $(X,g_\infty,y_\infty)$ smooth outside of finitely many points $u_1,...,u_K$. It is then possible to find $\delta_1>\delta_2>0$ such that  for some radius
$$\bar{r} \in (\frac{1}{2}-\delta_1,\frac{1}{2}+\delta_1),$$
the region
$$B_{g_k}(y_k,\bar{r} +\delta_2) \setminus   B_{g_k}(y_k,\bar{r} -\delta_2),$$
is $\delta_2$-far from $\{u_1,...,u_K\}$ as measured with the distance $g_\infty$. By the $\epsilon$-regularity theorem in \cite{Anderson90}\cite[(2.8)]{CheegerNaber15}, 
we get the desired bounds
$$|\Rm_{g_k}(x)|\leq C_v\text{  and  } \injrad_{g_k}(x) \geq \delta_2$$
on $B_{g_k}(y_k,\bar{r} +\delta_2) \setminus   B_{g_k}(y_k,\bar{r} -\delta_2)$
for $k$ large enough. This contradiction finishes the proof.

\end{proof}

Let $r_i$, $\delta_2$ be as in Lemma \ref{prep}.  By Lemma \ref{recouvre}, the balls $\{B_g(x_i,r_i - \frac{\delta_2}{2} r_v(x_i))\}_{i=1}^{L'}$ cover $M^{[>\epsilon]}$ and the covering multiplicity of $\{B_g(x_i,r_i )\}_{i=1}^{L'}$ is bounded by $C_v$. By removing some balls from this family, we can assume (after renumbering) that there is an integer $L\leq L'$ so that the points $x_1,...,x_L$ satisfy:
\begin{equation} \label{glissando}
\begin{split}
&  \text{$\{B_g(x_i,r_i - \frac{\delta_2}{2} r_v(x_i))\}_{i=1}^L$ cover $M^{[>\epsilon]}$ with covering multiplicity bounded by $C_v$,} \\
& \text{and for any $i\neq j\in \{1,...,L\}$, $B_g(x_i,r_i )$ is not included inside}\\
&  B_g(x_j,r_j - \frac{\delta_2}{2} r_v(x_j)).
\end{split}
\end{equation}  
Each $x_i$ ($i\in \{1,...,L\}$) is a point of $M^{[>\epsilon]}$, so $\int_{B_g(x_i,r_i -\frac{\delta_2}{2} r_v(x_i))} |\Rm_g|^2 >\epsilon$. Thus, as a consequence of the multiplicity bound on the covering $\{B_g(x_i,r_i -\frac{\delta_2}{2} r_v(x_i))\}_{i=1}^L$, there is $C_v>0$ so that
\begin{equation} \label{L}
L\leq C_v \int_{M} |\Rm_g|^2 dvol_g .
\end{equation}

The region $\bigcup_{i=1}^L B_{g}(x_i, r_i +  \frac{\delta_2}{2}r_v(x_i)) \setminus   \bigcup_{i=1}^L B_{g}(x_i, r_i -\frac{\delta_2}{2}r_v(x_i))$ is contained in $M^{[\le \epsilon]}$ and we want to cover it with nice balls capturing the collapsed geometry around that region:
\begin{lemme}\label{h-1}
There are finitely many points 
$$y_1,...,y_Q\in \bigcup_{i=1}^L B_{g}(x_i, r_i +  \frac{\delta_2}{2}r_v(x_i)) \setminus   \bigcup_{i=1}^L B_{g}(x_i, r_i - \frac{\delta_2}{2}r_v(x_i))$$ and corresponding radii $\bar{r}_1,...,\bar{r}_Q$ such that for some $C_v$ depending only on $v$:
\begin{enumerate}[label=(\roman*)]
\item $$\bigcup_{i=1}^L B_{g}(x_i, r_i +  \frac{\delta_2}{2}r_v(x_i)) \setminus   \bigcup_{i=1}^L B_{g}(x_i, r_i- \frac{\delta_2}{2}r_v(x_i)) \subset \bigcup_{q=1}^Q B_g(y_q,\frac{\bar{r}_q}{4}),$$
\item
$$  \frac{1}{48}r_v(y_q) \le \bar{r}_q \leq 1,$$
\item
$$\sup\{|\sec_x|^{1/2};\quad x\in B_g(y_q,\bar{r}_q)\} \le \frac{1}{\bar{r}_q} \text{ with equality if } \bar{r}_q<1,$$
\item
$$Q\leq C_vL.$$
\end{enumerate} 
Moreover for a universal constant $c_{\text{univ}}$, if $q,q'\in \{1,...,Q\}$ and
$$B_g(y_q,\frac{\bar{r}_q}{2}) \cap B_g(y_{q'},\frac{\bar{r}_{q'}}{2}) \neq \varnothing,$$ 
then a universal Harnack property holds: 
\begin{equation}\label{added2}
c_{\text{univ}}^{-1}\bar{r}_q \leq \bar{r}_{q'}\leq c_{\text{univ}}\bar{r}_q,
\end{equation}
and the multiplicity of the covering $\{B_g(y_q,\frac{\bar{r}_q}{2})\}_{q=1}^Q $ is bounded by $c_{\text{univ}}$.
\end{lemme}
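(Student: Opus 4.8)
The plan is to build the points $y_q$ by running a Vitali ($5r$-)covering argument at the \emph{curvature scale} rather than at the collapsing scale. For $y\in M$ set
\[
\bar r(y):=\sup\Big\{r\in(0,1]\ :\ \sup_{x\in B_g(y,r)}|\sec_x|^{1/2}\le \tfrac1r\Big\},
\]
a positive continuous function; since $r\mapsto\sup_{B_g(y,r)}|\sec_x|^{1/2}$ is nondecreasing and $|\sec_x|$ depends continuously on $x$, one checks that $\sup_{x\in B_g(y,\bar r(y))}|\sec_x|^{1/2}\le 1/\bar r(y)$, with equality whenever $\bar r(y)<1$; this is exactly (iii). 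Let $\mathcal A$ be the annular region on the left of (i). By (\ref{glissando}) one has $M^{[>\epsilon]}\subset\bigcup_i B_g(x_i,r_i-\tfrac{\delta_2}{2}r_v(x_i))$, hence $\mathcal A\subset M^{[\le\epsilon]}$, so (\ref{smol}) gives $\bar r(y)\ge r_v(y)/48$ for $y\in\mathcal A$, which is (ii). I would then apply the $5r$-covering lemma to the family $\{B_g(y,\bar r(y)/20):y\in\mathcal A\}$ (whose radii are $\le 1/20$) to extract a disjoint subfamily $\{B_g(y_q,\bar r_q/20)\}_q$, $\bar r_q:=\bar r(y_q)$, with fivefold dilations $\{B_g(y_q,\bar r_q/4)\}_q$ covering $\bigcup_{y\in\mathcal A}B_g(y,\bar r(y)/20)\supset\mathcal A$; this is (i), and finiteness of $Q$ follows from compactness of $M$ and the lower bound on $\bar r$.

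The Harnack statement (\ref{added2}) and the multiplicity bound are soft consequences of the definition of $\bar r$ together with Bishop--Gromov (available since $\Ric_g\ge -(n-1)$ always). If $B_g(y_q,\bar r_q/2)\cap B_g(y_{q'},\bar r_{q'}/2)\ne\varnothing$ and, say, $\bar r_q\le\bar r_{q'}$, then $\dist_g(y_q,y_{q'})<\bar r_{q'}$, so $B_g\big(y_q,\tfrac12(\bar r_{q'}-\bar r_q)\big)\subset B_g(y_{q'},\bar r_{q'})$, where $|\sec|^{1/2}\le 1/\bar r_{q'}$; the definition of $\bar r(y_q)$ then forces $\bar r_q\ge\tfrac12(\bar r_{q'}-\bar r_q)$, i.e.\ $\bar r_{q'}\le 3\bar r_q$ (the case $\bar r_{q'}=1$ is handled the same way). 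For the multiplicity: if the half-balls $B_g(y_{q_1},\bar r_{q_1}/2),\dots,B_g(y_{q_m},\bar r_{q_m}/2)$ share a point $z$, the Harnack inequality makes all the $\bar r_{q_j}$ comparable up to a factor $3$, so the \emph{disjoint} balls $B_g(y_{q_j},\bar r_{q_j}/20)$ all lie in $B_g(z,C\min_j\bar r_{q_j})$; Bishop--Gromov bounds $\Vol(B_g(z,C\min_j\bar r_{q_j}))$ by a universal multiple of each $\Vol(B_g(y_{q_j},\bar r_{q_j}/20))$, so $m$ is universally bounded.

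The real work is (iv), $Q\le C_vL$. Each $y_q$ lies in $\mathcal A\subset\bigcup_{i=1}^L A_i$, where $A_i:=B_g(x_i,r_i+\tfrac{\delta_2}{2}r_v(x_i))\setminus B_g(x_i,r_i-\tfrac{\delta_2}{2}r_v(x_i))$; fix an index $i=i(q)$ with $y_q\in A_i$. On a $\tfrac{\delta_2}{2}r_v(x_i)$-neighbourhood of $A_i$, Lemma \ref{prep} gives $|\Rm_g|\le C_vr_v(x_i)^{-2}$ and $\injrad_g\ge\delta_2r_v(x_i)$; the curvature bound immediately yields $\bar r_q\ge c_vr_v(x_i)$. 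The crucial point is the \emph{reverse} bound $\bar r_q\le C_vr_v(x_i)$: since $\dist_g(y_q,x_i)\le r_v(x_i)$, if $\bar r_q\ge 2r_v(x_i)$ then $B_g(x_i,r_v(x_i)/24)\subset B_g(y_q,\bar r_q)$, so $|\Rm_g|\le c_n\bar r_q^{-2}$ there; integrating and using $\Vol(B_g(x_i,r_v(x_i)/24))\le C_*r_v(x_i)^4$ (from the definition of $r_v$ when $r_v(x_i)<1$, from Bishop--Gromov when $r_v(x_i)=1$) gives $\int_{B_g(x_i,r_v(x_i)/24)}|\Rm_g|^2\le c_nC_*r_v(x_i)^4\bar r_q^{-4}$, which contradicts $x_i\in M^{[>\epsilon]}$ once $\bar r_q\ge(c_nC_*/\epsilon)^{1/4}r_v(x_i)$. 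Hence $\bar r_q\le C_vr_v(x_i)$ in all cases. Now for a fixed $i$, the disjoint balls $\{B_g(y_q,\bar r_q/20):i(q)=i\}$ have radii in $[c_vr_v(x_i)/20,\,C_vr_v(x_i)/20]$ and centres in $B_g(x_i,r_v(x_i))$, hence all lie in $B_g(x_i,C_v'r_v(x_i))$; each has volume $\ge C_v''r_v(x_i)^4$ by the curvature bound (iii) and the injectivity-radius lower bound from Lemma \ref{prep}, while $\Vol(B_g(x_i,C_v'r_v(x_i)))\le C_v'''r_v(x_i)^4$ again by the definition of $r_v$ and Bishop--Gromov. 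Therefore $\#\{q:i(q)=i\}\le C_v$, and summing over $i\in\{1,\dots,L\}$ gives (iv).

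The main obstacle is precisely this upper bound on the curvature scale: a priori $\bar r_q$ could be far larger than the collapsing scale $r_v(x_{i(q)})$ (as it is, for instance, in the thin part of a thin hyperbolic manifold, where $\bar r\equiv 1$ while $r_v$ is tiny), and the only thing ruling this out is that a large, nearly flat ball around $y_q$ would also be a large, nearly flat ball around the thick centre $x_{i(q)}$, in contradiction with the definite curvature concentration $\int_{B_g(x_{i(q)},r_v(x_{i(q)})/24)}|\Rm_g|^2>\epsilon$ forced by $x_{i(q)}\in M^{[>\epsilon]}$ — it is at this step that the thin part is linked back to the curvature energy accounted for by Chern--Gauss--Bonnet. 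Everything else (the Vitali selection, the Harnack and multiplicity estimates) is routine once $\bar r$ is in place, the volume comparisons all reducing to Bishop--Gromov together with the two-sided control of $r_v$ coming from its own definition.
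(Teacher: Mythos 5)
Your proposal is correct and follows essentially the same route as the paper: you define the curvature scale $\bar r(y)$ exactly as the paper defines $\bar k^{-1}$, cover the annulus at that scale, derive the Harnack property and multiplicity bound from Bishop--Gromov, and — crucially for (iv) — use that a nearly flat ball of radius $\gg r_v(x_i)$ around $y_q$ would engulf $B_g(x_i,r_v(x_i)/24)$ and contradict $\int_{B_g(x_i,r_v(x_i)/24)}|\Rm_g|^2>\epsilon$. The only difference is cosmetic: you run the Vitali $5r$-selection and prove the Harnack/multiplicity estimates by hand, whereas the paper outsources that part of the argument to \cite[Lemma 5.4, Theorem 5.5]{CheegerGromov85}; the mathematical content is the same.
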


\begin{proof}
This lemma follows from an argument appearing in \cite[Lemma 5.4, Theorem 5.5]{CheegerGromov85}. For 
$$x\in \bigcup_{i=1}^L B_{g}(x_i, r_i +  \frac{\delta_2}{2}r_v(x_i)) \setminus   \bigcup_{i=1}^L B_{g}(x_i, r_i -\frac{\delta_2}{2}r_v(x_i))$$
we set
$$\bar{k}(x) :=\inf \{\frac{1}{R} ; R\leq 1, \quad \sup_{x\in B_g(x,R)} |\sec_g(x)|^{1/2} \leq \frac{1}{R}\}.$$
In the statements and proofs of \cite[Lemma 5.4, Theorem 5.5]{CheegerGromov85} one should replace $\bar{k}$ by $\bar{k}^{-1}$ except in conformally changed metrics of the form ``$\bar{k}^2(.)g$''. Then their arguments imply our lemma, and give points $y_1,...,y_Q$ with radii
$$\bar{r}_q := \bar{k}(y_q)^{-1} \in(0,1]$$
so that the family of balls $\{B_g(y_q,\frac{\bar{r}_q}{4})\}_{q=1}^Q$ covers 
$$\bigcup_{i=1}^L B_{g}(x_i, r_i + \frac{\delta_2}{2}r_v(x_i)) \setminus   \bigcup_{i=1}^L B_{g}(x_i, r_i -\frac{\delta_2}{2}r_v(x_i)).$$
In particular, 
\begin{equation} \label{popo}
\sup\{|\sec_x|^{1/2};\quad x\in B_g(y_q,\bar{r}_q)\} \le \frac{1}{\bar{r}_q} \text{ with equality if } \bar{r}_q<1,
\end{equation}
and the multiplicity of $\{B_g(y_q,\frac{\bar{r}_q}{2})\}_{q=1}^Q$ is universally bounded (see proof of \cite[Theorem 5.5]{CheegerGromov85}). Moreover we have a Harnack property: the sizes of intersecting balls of the form $B_g(y_q,\frac{\bar{r}_q}{2})$ are universally comparable. 
The additional properties we need to check in order to finish the proof are: 
$$  \frac{1}{48}r_v(y_q) \le \bar{r}_q \leq 1$$
$$ \text{and }Q\leq C_vL.$$ 
The first line comes from (\ref{smol}).
By Lemma \ref{prep}, by (\ref{popo}) and the fact that $x_i\in M^{[>\epsilon]}$, if $B_g(y_q,\frac{\bar{r}_q}{4})$ intersects 
$$B_{g}(x_i, r_i +\frac{\delta_2}{2}r_v(x_i) \setminus   B_{g}(x_i, r_i - \frac{\delta_2}{2}r_v(x_i))$$ then $r_v(x_i)$ and $\bar{r}_q$ are comparable up to a factor $C_v$ and so Bishop-Gromov inequality yields the second line.

\end{proof}

\vspace{1em}

\subsection{Metric deformations}

Given $s>0$, given a  subset $S$ of a manifold $\mathcal{W}$ and a metric $h$ on $\mathcal{W}$, we recall the following notation for the $s$-neighborhood of $S$: 
$$T_{h,s}(S) := \{y\in \mathcal{W}; \dist_{h}(y,S) < s\}.$$
Given a compact Riemannian manifold with boundary $(\Omega,h)$, we say that $h$ is a product metric near $\partial \Omega\subset \Omega$ if there is a tubular neighborhood of $\partial \Omega$ isometric to 
$$\big(\partial \Omega \times [0,1), h|_{\Omega} \oplus dt^2 \big),$$
in particular $\partial \Omega$ is totally geodesic in that case.

Let $r_i$, $\delta_2$ be as in Lemma \ref{prep}. Let $\{x_i\}_{i=1}^L$ be as in (\ref{glissando}) and let $\{y_q\}_{q=1}^Q$, $\{\bar{r}_q\}_{q=1}^Q$ be as in Lemma \ref{h-1}. Recall that $\epsilon$ only depends on $v$ (see Subsection \ref{ttdecomposition}).

\begin{lemme} \label{h0}
There exists a universal constant $c_{univ}$ such that the following is true. 

For any $\eta>0$, if $v>0$ is taken small enough depending on $\eta$, then there is an open neighborhood $\mathcal{W}$ of $$\bigcup_{i=1}^L B_g(x_i,r_i)$$
and compact subdomains $\mathcal{R}_1,...,\mathcal{R}_{K_0} \subset \mathcal{W}$ with smooth boundaries
so that the following holds: there is a constant $C_v$ depending only on $v$ and a metric $h_0$ on $\mathcal{W}$ 
such that
\begin{enumerate}[label=(\roman*)]
\vspace{1em}

\item $\mathcal{W} \subset  \bigcup_{i=1}^LB_{g}(x_i,r_i) \cup  \bigcup_{q=1}^Q B_g(y_q,\frac{\bar{r}_q}{3})$,

 \vspace{1em}
\item for each $k\in \{1,...,K_0\}$, $h_0$ is a product metric near $\partial \mathcal{R}_k \subset \mathcal{R}_k$ and there is $i\in \{1,..., L\}$ with $\mathcal{R}_k \subset B_g(x_i,r_i)$,

\vspace{1em}
\item $\Vol(\mathcal{R}_k, h_0) \geq 1/C_v$, $\diam_{h_0} \mathcal{R}_k \leq C_v$ and $\injrad_{h_0} \geq 1/C_v$ on  $T_{h_0,1}(\partial \mathcal{R}_k)$ for all $k\in \{1,...,K_0\}$,

 \vspace{1em}
\item $|\Ric_{h_0}| \leq 3$ on $\mathcal{W}$, and $|\sec_{h_0}|\leq C_v$ on  $T_{h_0,1}\big(\mathcal{W} \setminus \bigcup_{k=1}^{K_0} \mathcal{R}_k\big)$,

 \vspace{1em}
\item
$ \Vol\big(T_{h_0,1}\big(\mathcal{W} \setminus \bigcup_{k=1}^{K_0} \mathcal{R}_k\big) , h_0 \big)\leq C_vL$,

\vspace{1em}
\item $h_0$ is a product metric near $\partial \mathcal{W} \subset \mathcal{W}$, $|\sec_{h_0}|\leq c_{univ}$ near $\partial \mathcal{W}$ and $\injrad_{h_0}(x) \leq \eta$ for $x\in \partial \mathcal{W}$. 

\end{enumerate}

\end{lemme}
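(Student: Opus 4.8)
The plan is to produce $h_0$ by gluing rescalings of the Einstein metric on a family of compact \emph{cores} $\mathcal{R}_k$ that absorb all the curvature concentration of $g$, to a metric of curvature $\le C_v$ on the complementary \emph{fringe} $\mathcal{W}\setminus\bigcup_k\mathcal{R}_k$, which interpolates these rescalings and is made to collapse as one approaches $\partial\mathcal{W}$. Everything is estimated at the collapsing scale. The inputs are the coverings already constructed: by Lemma \ref{recouvre} and (\ref{glissando}) the balls $\{B_g(x_i,r_i)\}_{i=1}^{L}$ have $C_v$-bounded multiplicity and $\bigcup_iB_g(x_i,r_i-\tfrac{\delta_2}{2}r_v(x_i))\supset M^{[>\epsilon]}$, with $L\le C_v\int_M|\Rm_g|^2\,dvol_g$ by (\ref{L}); by Lemma \ref{prep}, after rescaling $B_g(x_i,r_v(x_i))$ by $r_v(x_i)^{-2}$ there is an annular neighborhood of $\partial B_g(x_i,r_i)$ with curvature $\le C_v$ and injectivity radius $\ge\delta_2$; by (\ref{smol}) the thin part has $|\Rm_g|\le C r_v(\cdot)^{-2}$, so all ``large rescaled curvature'' lies in $M^{[>\epsilon]}$, and everything outside $\bigcup_iB_g(x_i,r_i-\delta_2 r_v(x_i))$ sits in these good annuli or in the balls $B_g(y_q,\bar r_q)$ of Lemma \ref{h-1}, which rescaled by $\bar r_q^{-2}$ carry a universally controlled metric and number at most $C_vL$ by Lemma \ref{h-1}(iv). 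I would also note, via Bishop--Gromov, that overlapping balls $B_g(x_i,r_i),B_g(x_j,r_j)$ have comparable collapsing scales and that $r_v(\cdot)$ is uniformly Lipschitz.

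For the cores: inside each $B_g(x_i,r_i)$ I would analyze the rescaled metric $r_v(x_i)^{-2}g$ with Cheeger--Naber \cite{CheegerNaber15} (it is $v$-noncollapsed with $|\Ric|\le 3$) to locate the finitely many scales of curvature concentration, and take the $\mathcal{R}_k$ to be geodesic balls, one per concentration, of radius a definite fraction $s_k\le 1$ of the relevant local scale, whose boundary lies in a region of $C_v$-bounded rescaled geometry with the concentration in their deep interior, so that $T_{h_0,1}(\partial\mathcal{R}_k)$ avoids it. On $\mathcal{R}_k$ put $h_0:=s_k^{-2}g$: then $|\Ric_{h_0}|=s_k^2|\lambda|\le 3$, $\diam_{h_0}\mathcal{R}_k\le C_v$, $\Vol(\mathcal{R}_k,h_0)\ge 1/C_v$ by Bishop--Gromov and $v$-noncollapsing, and $|\Rm_{h_0}|\le C_v$, $\injrad_{h_0}\ge 1/C_v$ on $T_{h_0,1}(\partial\mathcal{R}_k)$. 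Since at most $C_v$ balls overlap and each contributes only boundedly many concentrations, $K_0\le C_vL$; and applying the chopping Lemma \ref{chopping} in a thin collar of each $\partial\mathcal{R}_k$ (which lies in the bounded-geometry region) makes $h_0$ a product there, compatibly on overlaps since the relevant rescalings differ by bounded constant factors and share geodesics. This gives (ii), (iii), and the bounds $|\Ric_{h_0}|\le 3$, $|\sec_{h_0}|\le C_v$ on the collars.

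Then I would take $\mathcal{W}$ to be a neighborhood of $\bigcup_iB_g(x_i,r_i)$ contained in $\bigcup_iB_g(x_i,r_i)\cup\bigcup_qB_g(y_q,\bar r_q/3)$ whose boundary lies deep inside the $B_g(y_q,\bar r_q/3)$ (giving (i)), and on the fringe define $h_0:=\sigma^{-2}g$ for a smooth positive $\sigma$ equal to $s_k$ near each $\partial\mathcal{R}_k$, growing to a value comparable to $r_v(\cdot)$ away from the cores, and varying slowly at a scale comparable to its own size; near $\partial\mathcal{W}$ I apply Lemma \ref{chopping} to the universally bounded metric $\bar r_q^{-2}g$ to make $h_0$ a product with $|\sec_{h_0}|\le c_{univ}$. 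On the fringe this gives $|\sec_{h_0}|\le C_v$, hence the second half of (iv), and since the fringe is covered by $\le C_vL$ uniformly controlled charts each of $h_0$-volume $\le C_v$, one gets $\Vol(T_{h_0,1}(\mathcal{W}\setminus\bigcup_k\mathcal{R}_k),h_0)\le C_vL$, which is (v). For (vi): near $\partial\mathcal{W}$, $h_0$ is a product with $|\sec_{h_0}|\le c_{univ}$ by construction, and since $\partial\mathcal{W}\subset M^{[\le\epsilon]}$, equations (\ref{not3}) and (\ref{smol}) show the $h_0$-unit ball about a point of $\partial\mathcal{W}$ has volume $\lesssim v$, so by the volume--injectivity radius inequality for bounded curvature its $h_0$-injectivity radius is $\le\psi(v)$ with $\psi(v)\to 0$ as $v\to 0$; taking $v$ small depending on $\eta$ gives $\injrad_{h_0}\le\eta$ on $\partial\mathcal{W}$.

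The main obstacle will be the two \emph{sharp} points: that $|\Ric_{h_0}|\le 3$ — not merely $\le C_v$ — holds on all of $\mathcal{W}$, and that the cores really do match the fringe. On the fringe $h_0=\sigma^{-2}g$, and by the conformal change formula $|\Ric_{h_0}|_{h_0}\le|\lambda|\,\sigma^2+c_n\,\sigma^2\big(|\Hess_g\log\sigma|_g+|\nabla_g\log\sigma|_g^2\big)$; taking $\sigma$ to vary slowly at a sufficiently large multiple of its own value makes the correction term small, $|\lambda|\sigma^2\le 3\sigma^2$, and one must argue that wherever $\sigma$ varies appreciably it is bounded away from $1$ — since it then interpolates between comparable, and relatively small, collapsing scales — so that $3\sigma^2$ leaves room for the correction and the total is $\le 3$ for $v$ small. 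Making this quantitative (choosing the interpolation of $\sigma$ with enough care, and arranging the cores so $T_{h_0,1}(\partial\mathcal{R}_k)$ is genuinely disjoint from the curvature concentration) is the technical heart; the rest is the bookkeeping above with the cited collapsing and chopping tools.
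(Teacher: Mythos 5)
Your construction of $h_0$ and the cores $\mathcal{R}_k$ is structurally different from the paper's, and the difference introduces a genuine gap. In the paper, $h_0 = \rho^2 g$ is a single global conformal change, with $\rho$ a partition-of-unity weighted average of the inverse collapsing scales $1/r_i$ and $1/\bar r_q$, and the cores $\mathcal{R}_k$ are defined as the connected components of the deep interiors $\mathbf{U}$ of the covering balls — the set where no annulus is hit, so that all the $\varphi_i$ and $\phi_q$ take the values $0$ or $1$. On such a component $\rho$ is locally constant, so $|\Ric_{h_0}| = |\lambda|/\rho^2 \leq 3$ is immediate (an Einstein metric rescaled by a constant $\geq 1$), and the diameter, volume lower bound, and injectivity-radius bound near $\partial\mathcal{R}_k$ all come from the fact that $\mathcal{R}_k$ sits inside a single $B_g(x_i,r_i)$ rescaled by roughly $r_i^{-1}$. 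Crucially, the count $K_0 \leq C_v L$ in (\ref{k0l}) falls out for free: each $\mathcal{R}_k$ has $h_0$-volume $\geq 1/C_v$, they are disjoint, and since $h_0 \leq C_v r_i^{-2} g$ on $B_g(x_i,r_i)$ while $\Vol_g(B_g(x_i,r_i)) \leq C r_i^4$ by Bishop--Gromov, each ball contains at most $C_v$ of the $\mathcal{R}_k$.

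Your version instead places one core per ``curvature concentration'' inside each $B_g(x_i,r_i)$, rescaled to the concentration's own scale $s_k$, which may be much smaller than $r_i$. This breaks the volume-counting argument: with a piecewise rescaling $s_k^{-2}g$ on the cores, the $h_0$-volume of a fixed ball $B_g(x_i,r_i)$ is no longer uniformly bounded (it is of order $K_0^{(i)} + 1$, where $K_0^{(i)}$ is the number of cores inside), so the argument for $K_0 \leq C_v L$ becomes circular. You assert that ``each contributes only boundedly many concentrations'', but that requires an a priori $L^2$-curvature bound for a single rescaled ball, which — unlike the closed-manifold case where Chern--Gauss--Bonnet applies — you do not have; a single $B_g(x_i,r_i)$ can carry arbitrarily large $\int|\Rm|^2$. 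Moreover, ``concentration'' is not precisely defined, and nested concentrations (bubble trees) would force cores at multiple scales, which you would have to iterate and count. The paper deliberately avoids this: it does not try to resolve concentrations — all of them, at whatever scale, are swept into a single $\mathcal{R}_k$ at the collapsing scale $r_i$, and Lemma \ref{ri} (Cheeger--Naber finiteness) is what controls the internal geometry of each $\mathcal{R}_k$ afterwards. Finally, you flag the Ricci bound as the technical heart, but on the paper's cores it is trivial because $\rho$ is constant there; on the rest of $\mathcal{W}$ the conformal Hessian terms are controlled by the Harnack property (\ref{bahl}) to give $|\sec_{h_0}| \leq C_v$, which is all that (iv) and (v) ask for on the fringe. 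Your interpolating $\sigma$ instead ranges over the full spectrum from $s_k$ to $r_v$, which is a much wider range than the paper ever has to handle, and that is precisely what makes your Ricci bound delicate where the paper's is not.
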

\begin{proof}
Let $r_i$, $\delta_2$ be as in Lemma \ref{prep}. Let $\{x_i\}_{i=1}^L$ be as in (\ref{glissando}) and let $\{y_q\}_{q=1}^Q$, $\{\bar{r}_q\}_{q=1}^Q$ as in Lemma \ref{h-1}. As before, we denote by $C_v$ a constant depending only on $v$, and by $c_{\text{univ}}$ a universal constant. These constants can change from line to line.

Since $g$ is an Einstein metric and because of Lemma \ref{prep}, for all $i\in \{1,...,L\}$  and for all $x\in  B_{g}(x_i, r_i+\delta_2r_v(x_i)) \setminus B_{g}(x_i,r_i-\delta_2r_v(x_i))$:
\begin{equation} \label{pourg'}
\begin{split}
\injrad_{g}(x) &\geq \delta_2r_v(x_i) \geq \frac{r_i}{C_v}, \\
r_i^{k+1} |D^k_{g} \Rm_{g}| &  \leq C_{v}^{(k)} \text{ for all  }k\geq 0,
\end{split}
\end{equation}
where $D_{g}$ denotes the covariant derivative with respect to $g$. 
Similarly by Lemma \ref{h-1}, for all $q\in \{1,...,Q\}$ and all $y\in B_g(y_q,\bar{r}_q)$:
\begin{equation} \label{pourg''}
\bar{r}_q^{k+1} |D^k_{g} \Rm_{g}|  \leq c_{\text{univ}}^{(k)} \text{ for any  }k\geq 0.
\end{equation}


Note the following Harnack type property about the relative size of radii. Consider first $i\neq j \in \{1,...,L\}$ and suppose that 
$$B_{g}(x_i, r_i) \cap B_{g}(x_j, r_j) \neq \varnothing.$$
Then by (\ref{glissando}), 
the intersection of  
$$B_{g}(x_i, r_i) \setminus B_{g}(x_i,r_i - \frac{\delta_2}{2}r_v(x_i)) $$
and 
$$B_{g}(x_j, r_j) \setminus B_{g}(x_j,r_j - \frac{\delta_2}{2}r_v(x_j))$$
is non-empty. By the curvature bound of Lemma \ref{prep} and the definition of $ M^{[>\epsilon]}$, 
$$C_v^{-1} r_i \leq r_j \leq C_vr_i.$$

The analogous statement for the balls $\{B_g(y_q,\frac{\bar{r}_q}{2})\}_{q=1}^Q$ (with a universal constant) was shown in Lemma \ref{h-1} $(iv)$: if for $q,q' \in \{1,...,Q\}$
$$B_g(y_q, \frac{\bar{r}_q}{2}) \cap B_g(y_{q'}, \frac{\bar{r}_{q'}}{2}) \neq \varnothing,$$ 
then 
$$c_{\text{univ}}^{-1}\bar{r}_q \leq \bar{r}_{q'}\leq c_{\text{univ}}\bar{r}_q.$$

Finally, if $q\in \{1,...,Q\}$, $i\in \{1,.,L\}$ and if 
$$B_g(y_q, \frac{\bar{r}_q}{2}) \cap B_{g}(x_i, r_i ) \neq \varnothing,$$
then because $y_q$ is not in $\bigcup_{i=1}^L B_g(x_i,r_i - \frac{\delta_2}{2}r_v(x_i))$ by Lemma \ref{h-1}, we have 
$$B_g(y_q, \frac{\bar{r}_q}{2}) \cap \big(B_{g}(x_i, r_i) \setminus B_{g}(x_i,r_i - \frac{\delta_2}{2}r_v(x_i))\big) \neq \varnothing,$$
and so 
$$C_v^{-1} r_i \leq \bar{r}_q \leq C_vr_i.$$
Indeed the curvature bound of Lemma \ref{prep} and Lemma \ref{h-1} imply the first inequality. For the second inequality, recall that the curvature inside $B_g(y_q, {\bar{r}_q})$ is bounded by $\frac{1}{\bar{r}^2_q}$ (Lemma \ref{h-1}), and that $\int_{B_{g}(x_i, \frac{r_v(x_i)}{24})} |\Rm_g|^2 >\epsilon$ since $x_i \in M^{[>\epsilon]}$.

\vspace{1em}

We try to look for a conformal metric of the form ``$h_0= \rho^2 g$'' for some function $\rho$. Let us choose the conformal factor $\rho$ as follows. 
Let $\{\varphi_i\}_{i=1}^L$ and $\{\phi_q\}_{q=1}^Q$ be functions satisfying the following for some constant $C_v>0$ and universal constant $c_{\text{univ}}$:
$$\varphi_i,\phi_q:M \to [0,1],$$
$$\varphi_i=0 \text{ outside of $B_{g}(x_i, r_i ) $},  $$
$$\varphi_i=1 \text{ inside of $B_{g}(x_i,r_i  - \frac{\delta_2}{2}r_v(x_i))$},$$
$$\phi_q=0 \text{ outside of $B_{g}(y_q,\frac{\bar{r}_q}{2} ) $},  $$
$$\phi_q=1 \text{ inside of $B_{g}(y_q,\frac{\bar{r}_q}{3})$},$$

\begin{equation} \label{varphi}
\begin{split}
 r_i|D_{g}\varphi_i| \leq C_v, & \quad  r_i^{2}|\Hess_{g}\varphi_i| \leq C_v,\\
\bar{r}_q |D_{g}\phi_q| \leq c_{\text{univ}}, & \quad \bar{r}_q^{2}|\Hess_{g}\phi_q| \leq c_{\text{univ}}.
\end{split}
\end{equation}
where $\Hess_g$ denotes as usual the Hessian with respect to the metric $g$. The existence of such functions follows from the properties of $g$ listed at the beginning of the proof.

Set for all $i\in \{1,...,L\}$ or $q\in \{1,...Q\}$ and for any 
$$x\in  \bigcup_{i=1}^L B_g(x_i,r_i) \cup   \bigcup_{q=1}^Q B_g(y_q,\frac{\bar{r}_q}{3})$$
the following functions:
$$\psi_{i}(x):= \frac{\varphi_i(x)}{\sum_{i=1}^L \varphi_i(x) + \sum_{q=1}^Q \phi_q(x)} $$
$$\psi'_{q}(x):= \frac{\phi_q(x)}{\sum_{i=1}^L \varphi_i(x)+ \sum_{q=1}^Q \phi_q(x)} $$
$$\rho(x) := \sum_{i=1}^L \frac{\psi_i(x)}{r_i} + \sum_{q=1}^Q \frac{\psi'_q(x)}{\bar{r}_q}. $$
For any point 
$$x\in  \bigcup_{i=1}^L B_g(x_i,r_i) \cup   \bigcup_{q=1}^Q B_g(y_q,\frac{\bar{r}_q}{3}),$$ 
we have 
$$\sum_{i=1}^L \varphi_i(x) + \sum_{q=1}^Q \phi_q(x) \geq 1 .$$
Moreover, by the bound on the multiplicity of the families
$$\{B_g(x_i,{r_i})\}_{i=1}^L ,\quad  \{B_g(y_q,\frac{\bar{r}_q}{2})\}_{q=1}^Q,$$ 
(see Lemma \ref{recouvre}, Lemma \ref{h-1}), for any 
$$x\in  \bigcup_{i=1}^L B_g(x_i,r_i) \cup   \bigcup_{q=1}^Q B_g(y_q,\frac{\bar{r}_q}{3}),$$ 
 there is a small neighborhood $U_x$ of $x$ such that there are at most $C_v$ functions $\varphi_i$, and $c_{\text{univ}}$ functions $\phi_q$ which are non-vanishing in $U_x$. The Harnack property of the second paragraph ensures that the radii of nearby balls are comparable. Then observe that by (\ref{varphi}), the derivatives of  $\psi_i$ and $\psi'_q$ are locally bounded: at any point in $ B_g(x_i,r_i)$ or in $B_g(y_q,\frac{\bar{r}_q}{3})$, we have
$$|D_{g} \psi_i | \leq C_vr_i^{-1},\quad |D^2_{g} \psi_i | \leq C_vr_i^{-2},$$
$$\text{or } |D_g\psi'_q| \leq C_v \bar{r}_q^{-1},\quad |D^2_g\psi'_q |\leq C_v\bar{r}_q^{-2}$$
and in fact
$$|D_g\psi'_q| \leq c_{\text{univ}}\bar{r}_q^{-1},\quad |D^2_g\psi'_q |\leq c_{\text{univ}}\bar{r}_q^{-2} \text{ at points which are outside of $\bigcup_{i=1}^L B_g(x_i,r_i)$}.$$
Since $r_i\leq 1$, $\bar{r}_q\leq 1$,
at  any point in $ B_g(x_i,{r_i})$ or $B_g(y_q,\frac{\bar{r}_q}{3})$ we have again by the Harnack properties:
\begin{equation} \label{bahl}
\begin{split}
& C_v^{-1} r_i^{-1} \leq \rho \leq C_vr_i^{-1}, \\ 
& |D_{g} \rho| \leq C_vr_i^{-2}, \quad |D^2_{g} \rho| \leq C_vr_i^{-3}\\ 
&\text{or }\\
&  C_v^{-1} \bar{r}_q^{-1} \leq \rho \leq C_v\bar{r}_q^{-1}, \\
&  |D_{g} \rho| \leq C_v\bar{r}_q^{-2}, \quad |D^2_{g} \rho| \leq C_v\bar{r}_q^{-3}, \\
& \text{and in fact at points outside of $\bigcup_{i=1}^L B_g(x_i,r_i)$ we have:}\\
& c_{\text{univ}}^{-1} \bar{r}_q^{-1} \leq \rho \leq c_{\text{univ}}\bar{r}_q^{-1}, \\
&  |D_{g} \rho| \leq c_{\text{univ}}\bar{r}_q^{-2}, \quad |D^2_{g} \rho| \leq c_{\text{univ}} \bar{r}_q^{-3}.
\end{split}
\end{equation}

Consider the following metric: 
$$h_0 := \rho^2 g .$$
By the formula for the Riemann curvature tensor under a conformal change of metric, and by (\ref{bahl}), for any point in the union of 
$$\bigcup_{q=1}^Q B_g(y_q,\frac{\bar{r}_q}{3})$$
and 
$$\bigcup_{i=1}^L B_{g}(x_i, r_i) \setminus B_{g}(x_i, r_i -\frac{\delta_2}{2}r_v(x_i)),$$
the following curvature bound holds:
\begin{equation} \label{steuple}
|\sec_{h_0}(x)| \leq C_v.
\end{equation}
For points in 
$$  \bigcup_{q=1}^Q B_g(y_q,\frac{\bar{r}_q}{3}) \setminus \bigcup_{i=1}^L B_g(x_i,r_i) $$
the sectional curvature of $h_0$ is actually bounded by a universal constant $c_{\text{univ}}$ (in particular not depending on $v$), because the multiplicity bounds and Harnack property for balls in $\{B_g(y_q,\frac{\bar{r}_q}{2}) \}_{q=1}^Q$ are relying on a universal constant, see (\ref{bahl}).

Since by Lemma \ref{h-1} the balls in $\{B_g(y_q,\frac{\bar{r}_q}{4})\}_{q=1}^Q$ already cover
$$\bigcup_{i=1}^L B_g(x_i,r_i) \setminus B_{g}(x_i, r_i -\frac{\delta_2}{2}r_v(x_i)),$$ and because of (\ref{added2}), with respect to the metric $h_0$, the set
$$\bigcup_{i=1}^L B_g(x_i,r_i) \cup   \bigcup_{q=1}^Q B_g(y_q,\frac{\bar{r}_q}{3})$$
contains a complete $\frac{1}{c_{\text{univ}}}$-neighborhood of $\bigcup_{i=1}^L B_{g}(x_i, r_i) $.
Thus by the chopping theorem \cite{CheegerGromov91}, we can find a domain $\mathcal{W}$  such that
$$\bigcup_{i=1}^L B_g(x_i,r_i) \subset \mathcal{W} \subset \bigcup_{i=1}^L B_g(x_i,r_i) \cup \bigcup_{q=1}^Q B_g(y_q,\frac{\bar{r}_q}{3}),$$
and the boundary $\mathcal{W}$ has bounded second fundamental form. By Lemma \ref{chopping}, we modify the metric $h_0$ near $\partial \mathcal{W}$ so that the metric $h_0$ is now a product metric near $\partial \mathcal{W} \subset \mathcal{W}$ and the sectional curvature of $h_0$ is still universally bounded near $\partial \mathcal{W}$. 

By (\ref{smol}), the injectivity radius of $h_0$ near $\partial \mathcal{W}$ is less than the constant $\eta>0$ if $v$ is chosen small enough.

Similarly there is a neighborhood $\mathbf{U}$, with smooth boundary, of the set of points which are in 
$$\mathcal{W} \cap \bigcup_{i=1}^L B_g(x_i,r_i-\frac{\delta_2}{2}r_v(x_i))$$
 but not in
 $$ \bigcup_{i=1}^L B_g(x_i,r_i)\setminus B_g(x_i,r_i -\frac{\delta_2}{2}r_v(x_i)),$$
and whose boundary $\partial \mathbf{U}$ has bounded second fundamental form and the curvature is bounded by $C_v$ in a $\frac{1}{C_v}$-neighborhood neighborhood of it. We can again modify $h_0$ so that it is a product metric near $\partial \mathbf{U} \subset \mathbf{U}$. The bound (\ref{steuple}) means that 
$$\sup_{x\in \mathcal{W} \setminus \mathbf{U}} |\sec_{h_0}(x)|^{1/2} \leq C_v.$$

Recall that the curvature of $h_0$ near $\partial \mathcal{W}$ is bounded by a \textit{universal} constant, and we chose $v$ small enough so that the injectivity radius of $h_0$ near $\partial \mathcal{W}$ is less than $\eta$. (While the sectional curvature of $h_0$ near $\partial \mathcal{W}$ is bounded independently of the choice of $v$, the curvature bound elsewhere is potentially very large if $v$ is chosen small.)



Define $\mathcal{R}_1,...,\mathcal{R}_{K_0}$ to be the connected components of $ \mathbf{U}$. Note that by definition, each $\mathcal{R}_k$ is contained in a subset of the form $B_g(x_i,r_i)$. In our constructions, the constant $v$ is chosen according to $\eta$. Once $v$ is fixed, the constants $C_v$ is fixed. The properties in the statement of the lemma then follows from the constructions. For instance the volume bound of $T_{h_0,1}(\mathcal{W}\setminus \mathbf{U})$  comes from the covering multiplicity bounds, the Harnack property and the fact that $Q\leq C_vL$ (Lemma \ref{h-1}).

\end{proof}





Let $\mathcal{W}$, let $\mathcal{R}_1,...,\mathcal{R}_{K_0} \subset \mathcal{W}$, and let $h_0$ be given by Lemma \ref{h0}. Recall that by Lemma \ref{h0} $(iii)$ $(iv)$, each $(\mathcal{R}_k,h_0)$ has volume lower bound, diameter upper bound and $|\Ric_{h_0}|\leq 3$.
Note that by Lemma \ref{h0} $(ii)$ $(iii)$, there is a dimensional constant $C_v$ so that:
\begin{equation}\label{k0l}
K_0\leq C_vL
\end{equation}
where $K_0$ is the number of regions $\mathcal{R}_k$.

\begin{lemme} \label{ri}
There is a constant $C_v$ depending only on $v$, such that for each $\mathcal{R}_k$ ($k\in \{1,...,K_0\}$), one can find a metric $h_k$ which coincides with $h_0$ on a neighborhood of $\mathcal{R}_k$, such that
\begin{enumerate}[label=(\roman*)]
\item $|\sec_{h_k}|\leq C_v$ on $\mathcal{R}_k$,
\item $\injrad_{h_k}\geq C_v^{-1}$ on $\mathcal{R}_k$,
\item $\Vol(\mathcal{R}_k,h_k) \leq C_v$.
\end{enumerate}
\end{lemme}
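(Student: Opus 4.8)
The plan is to fix one region $\mathcal{R} := \mathcal{R}_k$, record the geometry we already control, and then obtain the missing \emph{bounded sectional curvature} by a compactness/contradiction argument resting on the Einstein $4$-manifold (and orbifold) structure theory discussed in Subsection \ref{marr}. By Lemma \ref{h0} (ii)--(iv), $(\mathcal{R},h_0)$ is a compact $4$-manifold with boundary on which $h_0$ is a product metric near $\partial\mathcal{R}$, $|\Ric_{h_0}|\le 3$, $\Vol(\mathcal{R},h_0)\ge 1/C_v$, $\diam_{h_0}\mathcal{R}\le C_v$, and $\injrad_{h_0}\ge 1/C_v$ on the collar $T_{h_0,1}(\partial\mathcal{R})$; using the product structure there together with the bounded curvature of $h_0$ near $\partial\mathcal{R}$ in $\mathcal{W}\setminus\bigcup_l\mathcal{R}_l$ (Lemma \ref{h0} (iv), (vi)) and the Gauss equation, one also gets $|\sec_{h_0}|\le C_v$ on that collar. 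Moreover $h_0=\rho^2 g$ with $g$ Einstein, and after rescaling $g$ by the $C_v$-controlled factor $r_i^2$ the conformal factor becomes $C_v$-controlled in $C^2$ (cf. (\ref{bahl})), so $(\mathcal{R},h_0)$ is $C_v$-bi-Lipschitz to a domain in a rescaled Einstein $4$-manifold with $|\Ric|\le 3$. The upper bound $\Vol(\mathcal{R},h_0)\le C_v$ in (iii) is then immediate from $\Ric_{h_0}\ge-3=-(n-1)$, $\diam_{h_0}\mathcal{R}\le C_v$ and Bishop--Gromov.

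Next I would localize the curvature concentration. Using the $\epsilon$-regularity theorem for Einstein $4$-metrics \cite{Nakajima88,Anderson89,Gao90}, Cheeger--Naber's structure theory \cite{CheegerNaber15}, and the conformal comparison $|\Rm_{h_0}|_{h_0}\le C_v\big(|\Rm_{\bar g}|_{\bar g}+C_v\big)$ coming from (\ref{conformal}), one shows that $|\sec_{h_0}|\le C_v$ on $\mathcal{R}\setminus\bigcup_{m=1}^N B_{h_0}(p_m,1/C_v)$, where each $p_m$ has $\dist_{h_0}(p_m,\partial\mathcal{R})\ge 1/C_v$ (the collar having bounded curvature), and where $N\le C_v$ --- this last bound because every curvature-concentration point lies within a $C_v$-multiple of the collapsing scale of one of the centres $x_i$ furnished by Lemma \ref{recouvre}, whose balls $\{B_g(x_i,r_v(x_i))\}$ there have $C_v$-bounded multiplicity. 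Since $(\mathcal{R},h_0)$ is non-collapsed, the regularity scale of $h_0$ is comparable to the curvature scale, so $\injrad_{h_0}\ge 1/C_v$ wherever $|\sec_{h_0}|\le C_v$ (outside the collar, where the bound is already known).

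Now run the contradiction. If no $C_v$ works, choose a sequence of closed Einstein $4$-manifolds with associated regions $(\mathcal{R}^{(j)},h_0^{(j)})$, all satisfying the above with a \emph{fixed} $v$ (hence fixed bounds), such that $\mathcal{R}^{(j)}$ carries no metric agreeing with $h_0^{(j)}$ near $\partial\mathcal{R}^{(j)}$ with $|\sec|\le j$, $\injrad\ge 1/j$, $\Vol\le j$. By the (orbifold) compactness and diffeomorphism-finiteness results of Subsection \ref{marr} --- \cite{Anderson90,BKN89,AndersonCheeger91,CheegerNaber15}, applied through the $C_v$-controlled conformal factors --- a subsequence of the $\mathcal{R}^{(j)}$ is diffeomorphic to a fixed compact $4$-manifold with boundary $\mathcal{R}_\infty$ (one of finitely many, given $v$), with $h_0^{(j)}\to h_\infty$ in $C^\infty$ on a fixed collar of $\partial\mathcal{R}_\infty$ and in $C^\infty_{\mathrm{loc}}$ away from the $\le C_v$ concentration points, where bubbles form over fixed ALE/ALF models. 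Fix once and for all a metric $\hat h$ on $\mathcal{R}_\infty$ equal to $h_\infty$ on that collar with $|\sec_{\hat h}|\le C_v$, $\injrad_{\hat h}\ge 1/C_v$, $\Vol(\mathcal{R}_\infty,\hat h)\le C_v$ (possible since $h_\infty$ has bounded curvature near $\partial\mathcal{R}_\infty$ and $\mathcal{R}_\infty$ is a fixed compact manifold). For $j$ large, $h_0^{(j)}$ is $C^\infty$-close to $h_\infty$ on the collar, so on $\mathcal{R}^{(j)}\cong\mathcal{R}_\infty$ I take $h_k:=h_0^{(j)}$ very near $\partial\mathcal{R}^{(j)}$, $h_k:=\hat h$ in the interior, and interpolate in between with a cut-off of bounded Hessian exactly as in the proof of Lemma \ref{chopping} (the two metrics being $C^2$-close there), keeping $|\sec|\le C_v$. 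Then $h_k$ agrees with $h_0^{(j)}$ near the boundary, $|\sec_{h_k}|\le C_v$, $\injrad_{h_k}\ge 1/C_v$, and $\Vol(\mathcal{R}^{(j)},h_k)\le C_v$ by Bishop--Gromov (the diameter staying $\le C_v$). For $j>C_v$ this contradicts the choice of $\mathcal{R}^{(j)}$, proving the lemma.

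The main obstacle is the second step: transporting Cheeger--Naber's Einstein $4$-manifold structure theory to the present setting --- a $C_v$-controlled conformal perturbation of a rescaled Einstein metric on a region \emph{with boundary} --- so as to get both the uniform bound $N\le C_v$ on the concentration points (tied to the covering of Lemma \ref{recouvre} and its multiplicity bound) and uniformly bounded geometry, with a definite lower injectivity radius bound, on the complement of small neighbourhoods of those points. Once this is established, the diffeomorphism finiteness and the final interpolation are routine, essentially a repeat of Lemma \ref{chopping}.
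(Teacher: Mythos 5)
Your overall strategy --- contradiction plus Cheeger--Naber structure theory, followed by an interpolation near the boundary as in Lemma~\ref{chopping} --- matches the paper's. However, the intermediate second step you flag as the ``main obstacle'' is indeed a genuine gap, and moreover it is unnecessary baggage. You try to localize curvature concentration inside $\mathcal{R}_k$ by running Einstein $\epsilon$-regularity on the \emph{original} metric $g$ and transporting the conclusion through the $C_v$-controlled conformal factor, so as to exhibit a bounded number $N\le C_v$ of bad balls with bubbles modelled on ALE spaces and controlled geometry in between. But $h_0$ is not Einstein on $\mathcal{R}_k$ (only $|\Ric_{h_0}|\le 3$ is asserted there), so the pointwise $\epsilon$-regularity picture for Einstein $4$-metrics does not immediately apply to the metric whose geometry you actually need to control; the bi-Lipschitz comparison you invoke does not by itself transfer the regularity scale and the lower injectivity bound, and establishing this carefully is exactly what you concede you have not done. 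As written the argument is incomplete.

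The paper sidesteps the entire localization step. It uses only the data in Lemma~\ref{h0}: $|\Ric_{h_0}|\le 3$ on $\mathcal{R}_k$, the product structure, bounded curvature, lower injectivity radius on the collar $T_{h_0,1}(\partial\mathcal{R}_k)$, the diameter upper bound and the volume lower bound. These alone, plugged directly into the proof of \cite[Theorem 1.12]{CheegerNaber15}, give diffeomorphism finiteness of the possible $\mathcal{R}_k$'s (with boundary behavior controlled by the collar), without ever identifying the concentration points, counting them, or exhibiting the ALE bubbles. Once there are only finitely many diffeomorphism types (with a compact family of collar geometries), fixing one reference metric on each type and gluing it to $h_0$ near the boundary --- essentially your final interpolation step --- closes the argument. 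Your final interpolation and the volume bound via Bishop--Gromov are fine; what needs to change is the middle: drop the explicit curvature-concentration/bubbling analysis and instead invoke the bounded-Ricci diffeomorphism finiteness directly on $(\mathcal{R}_k,h_0)$, which is both simpler and avoids the Einstein-specific $\epsilon$-regularity that does not cleanly apply to $h_0$.
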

\begin{proof}
We can argue by contradiction. Consider a sequence of Riemannian manifolds with boundary $\{(\hat{S}_j,g_j)\}_{j\geq 0}$ such that for some $c>0$,
\begin{itemize}
\item $T_{g_j,1} (\partial \hat{S}_j)$ has injectivity radius bounded below by $c^{-1}$ and sectional curvature bounded by $c$, 
\item $g_j$ is a metric product near $\partial \hat{S}_j \subset \hat{S}_j$,
\item $\diam_{g_j}(\hat{S}_j) \leq c$, $\Vol_{g_j}(\hat{S}_j)\geq c^{-1}$,
\item and $|\Ric_{g_j}|\leq 3$,
\end{itemize}
but such that the conclusion of the lemma fails. Then a subsequence converges in the Gromov-Hausdorff distance, and the proof of \cite[Theorem 1.12]{CheegerNaber15} shows that the possible diffeomorphism types of $\hat{S}_j$ is actually finite.
Hence, it is possible to put a metric on $\hat{S}_j$ with uniform control on curvature, injectivity radius and volume, which coincides with $g_j$ 
on $T_{g_j,{1}} (\partial \hat{S}_j)$.
 This is the desired contradiction.
\end{proof}

Let $\mathcal{V}$ be the complement of $\mathcal{W}$ in $M$. 
Given $\eta$, let $v$ and $\epsilon$ be fixed by Lemma \ref{h0}. We replace the metric $h_0$ of Lemma \ref{h0} by $h_i$ of Lemma \ref{ri} on each ${R}_i$ ($i\in \{1,...,{K_0}\}$), and call the new metric on $\mathcal{W}$ by $\underline{\mathbf{g}}_1$. Clearly, by Lemmas \ref{ri} and \ref{h0},  
$$|\sec_{{\underline{\mathbf{g}}_1}}|\leq C_v,$$
$$|\sec_{{\underline{\mathbf{g}}_1}}|\leq c_{univ} \text{ near $\partial \mathcal{W}$}$$ 
and the volume of $(\mathcal{W},\underline{\mathbf{g}}_1)$ is well controlled:
\begin{equation} \label{grooo}
\Vol(\mathcal{W},\underline{\mathbf{g}}_1) \leq C_vL.
\end{equation}
Moreover the injectivity radius of $\underline{\mathbf{g}}_1$ near $\partial \mathcal{W}$ is at most $\eta$

The next lemma produces a metric on $\mathcal{V}$, the complement of $\mathcal{W}$ in $M$.
\begin{lemme} \label{hoho}
There is a universal constant $c_{univ}$ such that the following is true.

Given $\eta>0$, there is a metric $\underline{\mathbf{g}}_2$ on a neighborhood of $\mathcal{V} =M\setminus \mathcal{W}$ such that 
\begin{enumerate}[label=(\roman*)]
\item $|\sec_{\underline{\mathbf{g}}_2}| \leq c_{univ}$,

\item $\injrad_{\underline{\mathbf{g}}_2} \leq \eta$ everywhere on $\mathcal{V}$,

\item $\underline{\mathbf{g}}_2 = \underline{\mathbf{g}}_1$ near $\partial \mathcal{V}$.

\end{enumerate}
\end{lemme}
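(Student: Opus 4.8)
The plan is to exploit that $\mathcal{V}=M\setminus\mathcal{W}$ lies in the thin part $M^{[\le\epsilon]}$: by the $\epsilon$-regularity estimate (\ref{smol}) together with (\ref{not3}), the metric $g$ on $\mathcal{V}$ is collapsed relative to its curvature scale by a factor $\omega(v)\to0$, and after a conformal rescaling that normalizes the curvature scale we are exactly in a position to invoke Theorem \ref{keyprop}~(A) to collapse $\mathcal{V}$ further, with a \emph{universal} curvature bound and without disturbing the metric near $\partial\mathcal{V}=\partial\mathcal{W}$.

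First I would extend the conformal construction of Lemma \ref{h0} across $\mathcal{V}$. Running the Cheeger--Gromov covering argument of \cite{CheegerGromov85} as in Lemma \ref{h-1}, but on the thin part where only the curvature (not volume ratios) enters so that all constants are universal, cover a neighborhood of $\mathcal{V}$ by balls $B_g(z_j,s_j)$ with $s_j$ the local curvature scale $\bar k(z_j)^{-1}$, of universally bounded multiplicity and with universally comparable radii for overlapping balls, arranged near $\partial\mathcal{W}$ to include the very balls $B_g(y_q,\bar r_q)$ already used there. Gluing the weights $1/s_j$ with a subordinate partition of unity restricting to the one of Lemma \ref{h0} near $\partial\mathcal{W}$ produces a conformal factor $\tilde\rho$ with $\tilde\rho=\rho$ near $\partial\mathcal{W}$, and by the conformal change formula (\ref{conformal}) and the universal bounds on the derivatives of $\tilde\rho$, the metric $\tilde h:=\tilde\rho^{2}g$ has $|\sec_{\tilde h}|\le c_{\mathrm{univ}}$ on $\mathcal{V}$ and $\injrad_{\tilde h}\le\omega(v)$ there. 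Performing the product-metric modification of Lemma \ref{chopping} once, in a two-sided collar of $\partial\mathcal{W}$ with respect to $\tilde h$, simultaneously pins down $\underline{\mathbf{g}}_1$ near $\partial\mathcal{W}$ on the $\mathcal{W}$-side and produces a metric $\bar g$ on a neighborhood of $\mathcal{V}$ which agrees with $\underline{\mathbf{g}}_1$ in this collar, still has $|\sec_{\bar g}|\le c_{\mathrm{univ}}$, and has $\injrad_{\bar g}\le\omega(v)$ near $\partial\mathcal{V}$ (the bound in Lemma \ref{h0}~(vi) is in fact of this form).

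Next I would apply Theorem \ref{keyprop}~(A). After an Abresch regularization \cite{Abresch88} making $\bar g$ $\mathbf{A}$-regular for a universal $\mathbf{A}$, a harmless rescaling by $c_{\mathrm{univ}}$ so that $|\sec_{\bar g}|\le1$, and an extension of $\bar g$ to a complete metric off $\mathcal{V}$, we have $\injrad_{\bar g}\le\epsilon_4$ on all of $\mathcal{V}$ once $v$ is small; hence Theorem \ref{keyprop}~(A) with $X=\mathcal{V}$ yields a metric $\hat g$ equal to $\bar g$ near $\partial\mathcal{V}$, with $|\sec_{\hat g}|\le C$ for a universal $C$ and $\injrad_{\hat g}\le\xi\big(\sup_{T_{\bar g,1}(\partial\mathcal{V})}\injrad_{\bar g}\big)$ for a universal function $\xi$ vanishing at $0$. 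Rescaling back and setting $\underline{\mathbf{g}}_2:=\hat g$ gives a metric on a neighborhood of $\mathcal{V}$, equal to $\underline{\mathbf{g}}_1$ near $\partial\mathcal{V}$, with $|\sec_{\underline{\mathbf{g}}_2}|\le c_{\mathrm{univ}}$; and since the argument of $\xi$ is $\le\omega(v)$ (up to a bounded rescaling) and $\xi$ is universal, choosing $v$ small enough — still a constraint depending only on $\eta$, to be folded into the choice of $v$ in Lemma \ref{h0} — makes $\injrad_{\underline{\mathbf{g}}_2}\le\eta$ everywhere on $\mathcal{V}$. This gives (i)--(iii).

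The main obstacle is that the curvature bound for $\underline{\mathbf{g}}_2$ must be \emph{universal}, not merely of the form $C_v$ as for $\underline{\mathbf{g}}_1$ on $\mathcal{W}$: this is precisely why one must rescale by the curvature scale — on which the covering of the thin part has universal multiplicity and universal Harnack constants, Bishop--Gromov with $|\Ric|\le3$ alone controlling it — rather than by the collapsing scale $r_v$, which reintroduced $v$-dependence on $\mathcal{W}$. The remaining points are routine: Theorem \ref{keyprop}~(A) outputs a universal curvature bound because the regularity $\mathbf{A}$ of $\bar g$ is universal, and one only has to choose the partition of unity and the product-collar modification consistently with those of Lemma \ref{h0} so that $\underline{\mathbf{g}}_2$ glues exactly to $\underline{\mathbf{g}}_1$ along the common hypersurface $\partial\mathcal{V}=\partial\mathcal{W}$.
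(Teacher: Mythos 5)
Your core mechanism is the same as the paper's: cover $\mathcal{V}\subset M^{[\leq\epsilon]}$ by balls on the curvature scale with universal Harnack and multiplicity constants (as in the Cheeger--Gromov argument of Lemma~\ref{h-1}), conformally rescale by that scale to get a metric with a \emph{universal} (not $v$-dependent) curvature bound, observe that the rescaled injectivity radius is $\leq\omega(v)\to 0$ because the volume ratios on the thin part are $\leq v$, and match the conformal factor to that of $h_0$ near $\partial\mathcal{W}$ so the boundary gluing is automatic. The point you correctly identify as the crux --- that on the thin part all constants are controlled by the local curvature scale alone, hence universal, whereas on $\mathcal{W}$ the collapsing scale $r_v$ reintroduced $v$-dependence --- is exactly what the paper's one-paragraph proof rests on.

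However, you then take an unnecessary detour. Having already reached a metric $\tilde h$ with $|\sec_{\tilde h}|\leq c_{\mathrm{univ}}$ and $\injrad_{\tilde h}\leq\omega(v)$ \emph{everywhere} on $\mathcal{V}$, the conclusion $\injrad\leq\eta$ follows immediately by taking $v$ small enough so that $\omega(v)\leq\eta$ (which is precisely the constraint on $v$ already used in Lemma~\ref{h0}(vi)). Invoking Theorem~\ref{keyprop}~(A) at that stage gains you nothing: its output bound $\injrad\leq\xi\bigl(\sup_{T(\partial\mathcal{V})}\injrad\bigr)$ is again controlled only by $\omega(v)$, so you still have to shrink $v$, while you have paid for the extra hypotheses (Abresch regularization, extension to a complete metric, an auxiliary rescaling). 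The same is true of the extra appeal to Lemma~\ref{chopping} near $\partial\mathcal{W}$: by Lemma~\ref{h0}(vi) the metric $h_0$ is already a product in a collar of $\partial\mathcal{W}$, and since the $\mathcal{R}_k$'s where $h_0$ is replaced by $h_k$ lie away from $\partial\mathcal{W}$, the metric $\underline{\mathbf{g}}_1$ still coincides with $h_0$ there; matching $\underline{\mathbf{g}}_2$ is just a matter of using the same conformal weights $\phi_q$ in the boundary collar, as you in fact propose. So the proposal is correct, but the invocations of Theorem~\ref{keyprop}~(A) and of a second chopping step are redundant and could be deleted, which would bring it in line with the paper's much shorter argument.
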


\begin{proof}

Since $\mathcal{V}\subset M^{[\leq \epsilon]}$, similarly to the proof of Lemma \ref{h-1}, we get a cover of $\mathcal{V}$ by balls with controlled curvature relative to their size and satisfying a universal Harnack inequality. By a conformal modification of the metric as in the proof of Lemma \ref{h0}, if $v$ was chosen small enough, we get a metric $\underline{\mathbf{g}}_2$ on $\mathcal{V}$ of curvature bounded by a universal constant $c_{univ}$ and with injectivity radius at most $\eta$. In fact, we can make sure that $\underline{\mathbf{g}}_2 = \underline{\mathbf{g}}_1$ near $\partial \mathcal{V}$ by construction of $\underline{\mathbf{g}}_1$ near $\partial \mathcal{V}=\partial \mathcal{W}$.


\end{proof}

\vspace{1em}

\subsection{End of the proof} \label{end of proof}

\begin{proof}[Proof of Theorem \ref{reduction} and Theorem \ref{mainineq}]

Let $\eta>0$ and let $\underline{\mathbf{g}}_1$ and $\underline{\mathbf{g}}_2$ be the metrics obtained in the previous lemmas. For a large $A_v>0$, consider  $\mathcal{V} \cup (\partial \mathcal{V} \times [0,A_v])\cup \mathcal{W}$ where we naturally identify $\partial \mathcal{V}$ with $\partial \mathcal{V}\times \{0\}$  (resp. $\partial \mathcal{V}\times \{A_v\}$ with $\partial W$). This manifold is just $M$ itself.  We put the following metric $\underline{\mathbf{g}}_3$ on it:
$$\underline{\mathbf{g}}_3= \underline{\mathbf{g}}_1 \text{ on $\mathcal{W}$},$$
$$\underline{\mathbf{g}}_3= \underline{\mathbf{g}}_2 \text{ on $\mathcal{V}$},$$
$$\underline{\mathbf{g}}_3= \underline{\mathbf{g}}_1\big|_{\partial V} \oplus dt^2 \text{ on $[0,A_v]$}.$$
This smooth metric satisfies on $\mathcal{V} \cup (\partial \mathcal{V} \times [0,A_v]) $:
$$|\sec_{\underline{\mathbf{g}}_3}|\leq c_{univ},$$ 
$$\injrad_{\underline{\mathbf{g}}_3} \leq \eta,$$
and satisfies 
$$|\sec_{\underline{\mathbf{g}}_3}|\leq C_v$$ everywhere. 

Then, if $A_v$ was taken large enough, by conformally changing $\underline{\mathbf{g}}_3$ with a function with uniformly controlled Hessian, equal to $2c_{univ}$ on $\mathcal{V} $, and equal to a large constant on $\mathcal{W}$ depending only on $v$, we get a final metric $\underline{\mathbf{g}}$ with 
$$|\sec_{\underline{\mathbf{g}}}| \leq 1,$$
$$\injrad_{\underline{\mathbf{g}}} \leq \sqrt{2c_{univ}} \eta \text{ on $\mathcal{V}$}.$$
From (\ref{grooo}), we can also ensure that 
\begin{equation} \label{added5}
\Vol((\partial \mathcal{V} \times [0,A_v] ) \cup \mathcal{W},\underline{\mathbf{g}}) \leq C_v L
\end{equation}
for some $C_v$ depending only on $v$.

\vspace{1em}

Let $\hat{\epsilon}$ be as in Proposition \ref{quantify}. Set
\begin{equation} \label{added7}
\eta = \frac{\hat{\epsilon}}{2\sqrt{2c_{univ}}}
 \end{equation}
(now $v,\epsilon$ are finally fixed as in Lemma \ref{h0}).
The metric $\underline{\mathbf{g}}$ on $M$ constructed just above then satisfies $|\sec_{\underline{\mathbf{g}}}|\leq 1$ and 
\begin{equation} \label{added6}
 \injrad_{\underline{\mathbf{g}}} < \hat{\epsilon} \text{ on $\mathcal{V}$.} 
 \end{equation}
From our construction, and by respectively (\ref{added6}), (\ref{added5}) and (\ref{L}), we estimate
\begin{align*}
\Vol(M^{(\underline{\mathbf{g}})}_{> \hat{\epsilon}},\underline{\mathbf{g}}) 
& \leq C_v\Vol((\partial \mathcal{V} \times [0,A_v] ) \cup \mathcal{W},\underline{\mathbf{g}}) \\
& \leq C_vL \\
 & \leq C_v\int_{M} |\Rm_g|^2 dvol_g 
\end{align*}
where $C_v$ is a constant only depending on $v$, which was fixed by the choice of $\eta$ in (\ref{added7}),  independently of $(M,g)$.
Moreover (if we identify $(\partial \mathcal{V} \times [0,A_v])  \cup \mathcal{W}$ with $\mathcal{W}$) by Lemma \ref{recouvre}, Lemma \ref{h0} we have $M^{[>\epsilon]}\subset \mathcal{W}$ and by (\ref{added6}) $\mathcal{V} \subset M^{({\underline{\mathbf{g}}})}_{\leq \hat{\epsilon}}$. This proves Theorem \ref{reduction}. Combined with Lemma \ref{euler}, this  finishes the proof of Theorem \ref{mainineq}.

\end{proof}

\vspace{1em}

\subsection{An extension to $4$-orbifolds with isolated singularities} \label{extension}
In the next section, we will need the following generalization of Theorem \ref{mainineq} for closed $4$-orbifolds with isolated singular points. Let $M$ be a closed $4$-orbifold with finitely many singular points endowed with a smooth Einstein metric $g$. 
Let 
$\{p_1,...,p_K\}$
be the singular points of $M$, with corresponding groups $\Gamma_i \subset O(4)$, then for each $i=1,...,K$ there is a trivializing neighborhood $\hat{B}_i$ of $p_i$, that is $\hat{B}_i$ is a small geodesic ball of radius $\hat{r}_i$ centered at $p_i$ such that $(2\hat{B}_i, \frac{1}{\hat{r}_i^2}g)$ is, say, $C^2$-close to the quotient of the Euclidean ball ${B}_{\text{Eucl}}(0,2)$ by $\Gamma_i$.
By Chern-Gauss-Bonnet for compact $4$-manifolds with boundary, we have
$$\int_M |\Rm_g|^2  dvol_g \leq {c} \e(M)$$
$$\text{and } K\leq c \e(M)$$
for some universal constant $c$.
 
 Let $\hat{\epsilon}>0$ be as in Proposition \ref{quantify}.

\begin{theo} \label{gene}
Let $M$ be a closed $4$-orbifold with finitely many singular points $\{p_1,...,p_K\}$, endowed with a smooth Einstein metric $g$ as above. 
There is a metric $\underline{\mathbf{g}}$ on $$N := M\setminus  \{ p_1,...,p_K\}$$ such that 
\begin{enumerate}[label=(\roman*)]
\item $|\sec_{\underline{\mathbf{g}}}|\leq 1$,
\item near the end corresponding to $p_i$,  $\underline{\mathbf{g}}$ is a product metric of the form 
$$g_{\text{round}} \oplus dt^2$$
on $\partial \hat{B}_i \times [0,1)$ where $ g_{\text{round}}$ is the quotient of the round metric on $S^3$ with volume $8\pi$, and $dt^2$ is the standard metric on $[0,\infty)$,
\item the volume of the $\hat{\epsilon}$-thick part satisfies
$$\Vol(N^{(\underline{\mathbf{g}})}_{> \hat{\epsilon}}, \underline{\mathbf{g}}) \leq C\e(M)$$ where $C$ is a universal constant.
\end{enumerate}
\end{theo}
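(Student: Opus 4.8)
The strategy is to run the proof of Theorem \ref{mainineq} essentially verbatim on the smooth incomplete manifold $N=M\backslash\{p_1,\dots,p_K\}$, replacing every invocation of a Cheeger--Naber type result for closed $4$-manifolds by its orbifold analogue (discussed in Subsection \ref{marr}), and taking care that near each removed singular point $p_i$ we arrive at a clean product end. First I would fix, as in Subsection \ref{ttdecomposition}, small universal constants $v>0$ and $\epsilon>0$ (depending only on $v$), define the collapsing scale $r_v$ and the thick/thin decomposition $N=N^{[>\epsilon]}\sqcup N^{[\le\epsilon]}$ using only the smooth Einstein metric $g$ on $N$ (the integral $\int_{B_g(x,r_v(x)/24)}|\Rm_g|^2$ makes sense since these balls stay away from the orbifold points once their radius is small, by the trivializing-neighborhood hypothesis). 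Because $g$ is Ricci-flat near the singularities after the rescaling normalization — or more precisely Einstein with the given sign — and the orbifold points are isolated, the $\epsilon$-regularity theorem still applies in small geodesic balls, so $N^{[\le\epsilon]}$ locally collapses and carries an $N$-structure exactly as in the closed case.

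Next I would reprove the covering lemmas (Lemmas \ref{recouvre}, \ref{prep}, \ref{h-1}) in the orbifold setting. The only place the closed-manifold assumption is used is in the contradiction/blow-up arguments, where one passes to a Gromov--Hausdorff limit: here one invokes instead the orbifold version of Cheeger--Naber (Theorems 1.4 and 1.12 of \cite{CheegerNaber15} as extended in Subsection \ref{marr}), so that limit spaces are smooth orbifolds with isolated singularities, the singular set has codimension $4$, ``volume cone implies metric cone'' holds, and so on. This gives a covering of $N^{[>\epsilon]}$ by balls $B_g(x_i,r_i/3)$ with bounded multiplicity and annuli of bounded curvature, with $L\le C_v\int_N|\Rm_g|^2\,dvol_g\le C\e(M)$. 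Then one runs the metric deformation of Lemmas \ref{h0}, \ref{ri}, \ref{hoho}: a conformal modification $h_0=\rho^2 g$ on a smooth approximation $\mathcal{W}$ of $\bigcup_i B_g(x_i,r_i)$, with bounded-geometry regions $\mathcal R_k$ on which one re-imports a controlled metric via the orbifold diffeomorphism finiteness, and a very-collapsed metric $\underline{\mathbf g}_2$ on $\mathcal V=N\backslash\mathcal W$. Gluing along a long product collar as in Subsection \ref{end of proof} and rescaling/conformally adjusting yields $\underline{\mathbf g}$ on $N$ with $|\sec_{\underline{\mathbf g}}|\le 1$, $\injrad_{\underline{\mathbf g}}<\hat\epsilon$ on $\mathcal V$, and $\Vol(\mathcal W,\underline{\mathbf g})\le C_vL\le C\e(M)$, which gives (i) and (iii).

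The genuinely new point is (ii): arranging that near the end corresponding to $p_i$ the metric is exactly the product $g_{\text{round}}\oplus dt^2$ on $\partial\hat B_i\times[0,1)$, where $g_{\text{round}}$ is the quotient of the round $S^3$ of volume $8\pi$ and the sphere of curvature $1$ has volume $2\pi^2$; so ``volume $8\pi$'' corresponds to the appropriately rescaled round metric matching the $|\sec|\le 1$ normalization. To get this I would not treat $\hat B_i$ as part of the collapsed region but instead note that, after rescaling, $(2\hat B_i,\hat r_i^{-2}g)$ is $C^2$-close to the flat cone $\mathbb R^4/\Gamma_i$; hence on a slightly smaller spherical shell $\partial\hat B_i$ the induced metric is $C^2$-close to the round $S^3/\Gamma_i$, its second fundamental form is close to that of a round sphere, and the normal injectivity radius is definite. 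By Lemma \ref{chopping} (the chopping lemma) one replaces $g$ in a thin shell by a metric which is a product $g|_{\partial\hat B_i}\oplus dt^2$ near the cut, and then interpolates $g|_{\partial\hat B_i}$ to the exact round metric $g_{\text{round}}$ through metrics with $|\sec|\le$ a universal constant, using that both metrics are $C^2$-close on $S^3/\Gamma_i$; finally one attaches the half-cylinder $\partial\hat B_i\times[0,1)$ with the product metric. After the global rescaling by $2c_{\text{univ}}$ needed to force $|\sec|\le 1$, the round factor has the stated volume. The curvature contribution and the volume of this shell are universally bounded and absorbed into the $C\e(M)$ bound since $K\le c\,\e(M)$.

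\textbf{Main obstacle.} The technical heart — and the step I expect to be most delicate — is the orbifold extension of the covering lemmas, specifically checking that every compactness/blow-up argument in Lemmas \ref{recouvre}, \ref{prep}, \ref{h-1} and \ref{ri} survives when limits are orbifolds rather than manifolds: one must know that the singular points of a non-collapsed limit are isolated (so that the annuli produced in Lemma \ref{prep} can be taken $\delta_2$-far from \emph{all} singular points), that the codimension-$4$ statement and the ``volume cone $\Rightarrow$ metric cone'' rigidity hold in the orbifold category, and that the diffeomorphism finiteness of \cite{CheegerNaber15} holds for orbifolds with isolated singularities of bounded order — all of which is precisely what Subsection \ref{marr} sets up. Given that input, the rest is a faithful but careful transcription of the proof of Theorem \ref{mainineq}, with the product-end modification near each $p_i$ grafted on via Lemma \ref{chopping}.
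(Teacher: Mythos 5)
Your overall strategy — extend Theorem \ref{mainineq} via the orbifold version of Cheeger--Naber from Subsection \ref{marr}, then graft on a product end near each $p_i$ using Lemma \ref{chopping} — is the right one, and the treatment of (ii) is essentially what the paper does at the end of its proof. But there is a genuine gap at the start, in the definition of the thick/thin decomposition, and it stems from a false parenthetical claim in your first paragraph.

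You assert that the balls $B_g(x, r_v(x)/24)$ ``stay away from the orbifold points once their radius is small, by the trivializing-neighborhood hypothesis.'' This is only true near singular points $p_i$ whose cone angle is \emph{small}: for such a $p_i$ the limit volume ratio is $\le v$, so $r_v(x)\sim \dist_g(x,p_i)$ and the ball of radius $r_v(x)/24$ indeed misses $p_i$. But a singular point with a \emph{fat} cone (limit volume ratio $>v$) has $r_v(x)=1$ for $x$ near $p_i$, and since the curvature of the Einstein orbifold metric is bounded near $p_i$, the integral $\int_{B_g(x,1/24)}|\Rm_g|^2$ can easily be $\le\epsilon$. Your decomposition would then place $x$ in the thin part $N^{[\le\epsilon]}$, but the ball $B_g(x,r_v(x)/24)$ contains the orbifold singularity, so the $\epsilon$-regularity/$N$-structure step — the very reason the thin part is amenable to the collapsing construction of Lemma \ref{hoho} — does not apply. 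You never ensure that fat-cone singular points land in the thick region $\mathcal W$, which is what your treatment of (ii) tacitly assumes. The paper handles this by treating the singular points ``informally as points of infinite curvature'': it \emph{adds} to the thick part all $x$ with $B_g(x,r_v(x)/24)\cap\{p_1,\dots,p_K\}\neq\varnothing$, and it modifies the definition of $\bar k$ in the proof of Lemma \ref{h-1} so that the balls $B_g(y_q,\bar r_q)$ avoid the singular set. Neither modification appears in your proposal.

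A related omission: the paper first removes a small cone neighborhood $U_{i_0}$ of each \emph{thin-cone} singular point (those satisfying (\ref{cuu})), obtaining a compact orbifold with boundary $M'$ whose boundary is saturated by an $N$-structure, and only then runs the decomposition and covering argument on $M'$. Working directly on the incomplete $N$ as you propose forces the collapsing construction to handle the non-compact end at $p_{i_0}$, which is not literally covered by Theorem \ref{keyprop} (stated for compact domains with boundary). Chopping first, as the paper does, sidesteps this. So the fixes you need are: (a) redefine the thick part to include all points whose scale-$r_v/24$ ball meets a singular point; (b) modify $\bar k$ in Lemma \ref{h-1} to require $B_g(x,R)\cap\{p_1,\dots,p_K\}=\varnothing$; and (c) pre-chop a cone neighborhood around each thin-cone singular point. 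With these in place, the rest of your outline — including the interpolation to $g_{\text{round}}\oplus dt^2$ via Lemma \ref{chopping} — matches the paper.
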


\begin{proof}
We only sketch the proof, since it is almost identical to that of Theorem \ref{mainineq}. Informally we treat the singular points as points with infinite curvature. Take $v,\epsilon>0$ small constants. 


As before, define for all $p\in M$:
$$r_v(p) := \sup\{r\in(0,1]; \Vol(B(p,r),g) \geq vr^4\}.$$ 
If for some $i_0\in \{1,...,K\}$, the limit volume ratio at $p_{i_0}$ satisfies
\begin{equation} \label{cuu}
\lim_{r\to \infty} \frac{\Vol(B(p_{i_0},r))}{r^4} \leq v,
\end{equation}
then there is a small region $U_{i_0}$ near $p_{i_0}$ which is a cone over a spherical quotient, and since $v$ is small, the geometry is locally collapsed and there is an $N$-structure on $U_{i_0}\setminus \{p_{i_0}\}$. We remove from $M$ small disjoint neighborhoods of all points $p_i$ such that (\ref{cuu}) holds at $p_i$. We obtain a compact orbifold with smooth boundary called $M'$, and we can make sure that a neighborhood of the boundary $\partial M'$ is saturated by an $N$-structure.
At any point $p\in M'$, $r_v(p)$ is strictly positive and we define 
\vspace{1em}

\begin{align*}
M'^{[> \epsilon]} := \{x\in M'; \quad & \int_{B_g(x,\frac{r_v(x)}{24})} |\Rm_g|^2 dvol_g >\epsilon \text{ or }  B_g(x,\frac{r_v(x)}{24}) \cap \{p_1,...,p_K\} \neq \varnothing \},
\end{align*}

$$ M'^{[\leq \epsilon]} := M'\setminus  M'^{[> \epsilon]}.$$
\vspace{1em}

By definition and by the $\epsilon$-regularity theorem, at any point $x$ of the thin part $M'^{[\leq \epsilon]}$, the ball $B_g(x,\frac{r_v(x)}{24})$ is smooth and locally collapsed, in particular it is covered by an $N$-structure. 
 
We can basically follow the proof of Theorem \ref{mainineq} step by step, applied to $M', M'^{[> \epsilon]}, M'^{[\leq \epsilon]}$. The only difference lies in the treatment of the singular points of $M'$. For this we can use the generalization of Cheeger-Naber \cite{CheegerNaber15} for orbifolds with isolated singularities described in Subsection \ref{marr}, which implies that the analogues of Lemmas \ref{recouvre}, \ref{prep} and \ref{ri} hold. In the proof of Lemma \ref{h-1}, we need to replace the definition of $\bar{k}$ by the following:
$$\bar{k}:=\inf\{\frac{1}{R}; R\leq 1, \quad \sup_{x\in B_g(x,R)} |\sec_g(x)|^{1/2} \leq \frac{1}{R} \text{ and } B_g(x,R)\cap \{p_1,...,p_K\}=\varnothing\}.$$

It follows from the arguments of the previous subsection that we get a metric on $M'$ with bounded curvature, such that the metric has bounded geometry near $\{p_1,...,p_K\} \cap M'$. Moreover the volume of the $\hat{\epsilon}$-thick part of $M'$ is bounded by  $C_v \e(M)$. Then after modifying once more that metric near $\{p_1,...,p_K\} \cap M'$ and $\partial M'$, we get the desired metric $\underline{\mathbf{g}}$ on $N=M\setminus \{ p_1,...,p_K\}$, which proves the theorem.
 \end{proof}

\vspace{1em}

\subsection{A conjectural obstruction and further remarks}
\label{crac}



Theorem \ref{mainineq} could be relevant in finding new topological obstructions for existence of Einstein manifolds (see \cite{Kotschick12} for a survey of the rare known obstructions). Indeed, we conjecture that for some constant $C>0$, for any surface $\Sigma_\gamma $ of genus $\gamma>1$ and any closed oriented $4$-manifold $N$, if $M$ is homeomorphic to the connected sum $(S^2\times \Sigma_\gamma) \sharp N$, one should have
$$\minvol (M) \geq \essminvol (M) \geq C \gamma.$$
If this is true and if $N$ is chosen so that $\e(M) < C'' \gamma$ for a small enough constant  $C''$, then $M$ would not carry any Einstein metric by Theorem \ref{mainineq}.

Theorem \ref{mainineq} is also related to a question of \cite{Kotschick98}, about the number of smooth structures carrying Einstein metrics, on a given topological $4$-manifold. 
What Theorem \ref{mainineq} implies is that, after removing regions saturated by $F$-structures,
there are only finitely many smooth structures carrying Einstein metrics on a given closed topological $4$-manifold.

We think that Theorem \ref{mainineq} should hold for $\minvol$ instead of $\essminvol$, but a proof would require a better understanding of the thin part of Einstein $4$-manifolds.


Theorem \ref{mainineq} was partly inspired by some analogous results which we proved in the context of minimal surfaces \cite[Corollary 3, Section 4]{moi19c}. There, the sheeted/non-sheeted decomposition, the Morse index and curvature bounds for stable surfaces replace respectively the thin-thick decomposition, the $L^2$-norm of the curvature and $\epsilon$-regularity theorems. The combinatorial argument of \cite{moi19c} could be adapted to give a proof of Theorem \ref{mainineq} not relying on the codimension 4 theorem of Cheeger-Naber \cite{CheegerNaber15}.

\vspace{3em}

\section{Essential minimal volume of complex surfaces} \label{section5}
This section is about the asymptotic behavior of the essential minimal volume on complex surfaces. The main theorem gives, for most complex surfaces, an asymptotic bound for $\essminvol$ in terms of topology. Before stating and proving this result, we review some elements pertaining to the classification of complex surfaces.

\vspace{1em}

\subsection{Preliminaries}
Closed complex surfaces are described via the Enriques-Kodaira classification \cite{BHPV04,FriedmanMorgan94}, which divides them according to their Kodaira dimension $\kappa\in \{-\infty,0,1,2\}$. For a complex surface $S$, let $K_S$ be its canonical line bundle. Let $P_m(S)$ be the dimension of the space of holomorphic sections of $K_S^m$, the $m$th tensor power of $K_S$. If $P_m(S)=0$ for all $m\geq1$, then set $\kappa:=-\infty$. Otherwise, set $\kappa:= \limsup_{m\to \infty} \frac{\log(P_m(S))}{\log(m)}$. A blow-up of $S$ corresponds in terms of diffeomorphism type to performing the connected sum $S\sharp \overline{\mathbb{C}P}^2$. The surface $S$ is called minimal if it is not the blow-up of another complex surface.

Surfaces of Kodaira dimension $2$ are said to be of general type. In some sense, most surfaces fall into that category (for instance smooth complete intersections of degree at least $5$ in $\mathbb{C}P^3$ are of general type). They have positive Euler characteristic.

Surfaces of Kodaira dimension $1$ or $0$ are all diffeomorphic to elliptic surfaces, and they have nonnegative Euler characteristic. Any minimal elliptic surface of Euler characteristic $0$ is diffeomorphic to a torus bundle over a surface on which one performs some logarithmic transforms. Any minimal elliptic surface is diffeomorphic to a fiber sum of an elliptic surface with Euler characteristic $0$ and copies of the rational elliptic surface (diffeomorphic to $\mathbb{C}P^2 \sharp 9\overline{\mathbb{C}P^2}$).

Sufaces of Kodaira dimension $-\infty$ are not completely classified. The K\"{a}hler case is understood: these are just blow-ups of $\mathbb{C}P^2$ or of $\mathbb{C}P^1$-bundles over a Riemann surface. Non-K\"{a}hler surfaces of Kodaira dimension $-\infty$ are called surfaces of class VII, and there are examples but no general classification so far.

\vspace{1em}

\subsection{The linear bound}
Quite similarly to real $2$-dimensional surfaces, the essential minimal volume of most complex surfaces is comparable to the Euler characteristic.

\begin{theo} \label{blip}
There is a universal constant $C>0$ such that for any closed complex surface $S$ of nonnegative Kodaira dimension,
$$C^{-1}\e(S)\leq \essminvol(S)\leq C \e(S).$$
\end{theo}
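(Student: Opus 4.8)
The plan is to prove Theorem \ref{blip} by reducing each case of the Enriques--Kodaira classification to a situation already controlled in the paper. The lower bound $\essminvol(S) \geq C^{-1}\e(S)$ holds for \emph{every} closed $4$-manifold by Lemma \ref{euler}, together with the fact that a closed complex surface of nonnegative Kodaira dimension has $\e(S) \geq 0$ (elliptic surfaces and surfaces of general type all satisfy this). So the entire content is the upper bound. By Proposition \ref{quantify}, it suffices to produce, for each such $S$, a metric $g \in \mathcal{M}_{|\sec|\leq 1}(S)$ whose $\hat{\epsilon}$-thick part has volume $\leq C\e(S)$; and by Lemma \ref{connected sum} and the blow-up formula $S' = S\sharp \overline{\mathbb{C}P}^2$ (each blow-up changes $\e$ by $+1$ and contributes a bounded amount), it suffices to treat \emph{minimal} surfaces, and moreover to allow ourselves connected sums with a bounded number of fixed pieces.

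First I would dispose of Kodaira dimension $2$ (general type). A minimal surface of general type $S$ has a canonical model $S_{\mathrm{can}}$, obtained by contracting the $(-2)$-curves; $S_{\mathrm{can}}$ is a closed complex surface with at worst isolated rational double points, and it carries a K\"ahler--Einstein orbifold metric of negative scalar curvature (Aubin--Yau in the orbifold setting). Rescaling so that $\Ric = -3g$, Theorem \ref{gene} applies to $S_{\mathrm{can}}$: it produces a metric $\underline{\mathbf{g}}$ on $S_{\mathrm{can}}\setminus\{\text{singular points}\}$ with $|\sec|\leq 1$, with standard spherical-space-form product ends near the punctures, and with $\hat{\epsilon}$-thick part of volume $\leq C\e(S_{\mathrm{can}})$. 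Since $S$ is obtained from (a resolution of) $S_{\mathrm{can}}$ by gluing in a bounded number of plumbings of $(-2)$-spheres — each of which is a $4$-manifold with boundary a spherical space form, carrying a polarized $F$-structure of positive rank on the complement of a collar (the resolution graphs of rational double points are ADE, so there are finitely many diffeomorphism types of such plumbings) — and since $\e$ changes in a controlled way, a metric gluing argument exactly as in Lemma \ref{connected sum} and Theorem \ref{mainineq} produces the required metric on $S$ itself with $\hat{\epsilon}$-thick volume $\leq C\e(S)$.

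Next, Kodaira dimension $0$ and $1$: by the classification recalled in the Preliminaries, every minimal surface of Kodaira dimension $0$ or $1$ is diffeomorphic to an elliptic surface, and every such is diffeomorphic to a fiber sum of a \emph{torus bundle over a surface with some logarithmic transforms} (Euler characteristic $0$) with a bounded-per-unit-$\e$ number of copies of the rational elliptic surface $\mathbb{C}P^2\sharp 9\overline{\mathbb{C}P^2}$. A torus bundle over a surface, even after finitely many logarithmic transforms along torus fibers, carries an $F$-structure of positive rank coming from the torus-fiber directions (the log transforms are performed in neighborhoods diffeomorphic to $T^2\times D^2$, which remain saturated), so that piece has $\essminvol = 0$ and $\e = 0$; and the rational elliptic surface is a fixed $4$-manifold with fixed finite $\essminvol$ and $\e = 12$. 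Since fiber summing along a torus is a connected-sum-type operation that removes a $T^2\times D^2$ and glues in an $S^1$-bundle (as in Subsection \ref{gauge}), Lemma \ref{connected sum}'s gluing argument — combined with the additivity of $\e$ under fiber sum along tori (Euler characteristic of $T^2\times D^2$ vanishes) — gives $\essminvol(S) \leq C(\text{number of summands}) \leq C\e(S)$.

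The main obstacle is the general-type case, specifically making the orbifold input Theorem \ref{gene} interface cleanly with the resolution: one must check that near each rational double point the ``product spherical end'' produced by Theorem \ref{gene} can be capped off by the standard ALE-type neck of the minimal resolution of that singularity while keeping $|\sec|\leq 1$ and only adding a bounded volume to the thick part — this is where the finiteness of ADE diffeomorphism types and the existence of a polarized positive-rank $F$-structure on the resolution minus a collar (which collapses the exceptional divisor) does the work, exactly paralleling how Theorem \ref{mainineq} was extended. A secondary subtlety is simply bookkeeping the constants: one must verify that in each case the number of ``bad'' pieces glued in is bounded linearly by $\e(S)$ (true because each blow-up, each log transform, each fiber summand with a rational elliptic surface contributes a definite positive amount to $\e$), so that repeated application of Lemma \ref{connected sum} does not accumulate an unbounded factor. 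Once these points are settled, assembling the three Kodaira cases gives the stated two-sided bound with a single universal $C$.
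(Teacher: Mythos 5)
Your overall outline matches the paper's: the lower bound is Lemma \ref{euler} (using $\e(S)\geq 0$ for nonnegative Kodaira dimension), you reduce to minimal surfaces via Lemma \ref{connected sum}, split by Kodaira dimension, handle elliptic surfaces of Kodaira dimension $0,1$ via the fiber-sum picture and the $F$-structure on the Euler-characteristic-zero pieces, and attack general type via the orbifold canonical model and Theorem \ref{gene}. The elliptic surfaces part is essentially right (the paper quotes \cite{LeBrun99} but the underlying mechanism is the same torus-fiber $F$-structure you describe).

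However, your treatment of the general type case has a genuine gap, and it is in exactly the place you flagged as the main obstacle. You propose that the minimal resolutions of the rational double points can be glued in at bounded cost because (a) there are "finitely many diffeomorphism types of such plumbings" and (b) the plumbing "minus a collar" carries a "polarized positive-rank $F$-structure \dots which collapses the exceptional divisor." Both claims are wrong. For (a): the resolution graphs run over the full ADE list, and the $A_n$ plumbings ($n\geq 1$) form an infinite family with pairwise distinct Euler characteristics $n+1$, hence infinitely many diffeomorphism types. For (b): a compact region saturated by a positive-rank $F$-structure has zero Euler characteristic (cf.\ the proof of Lemma \ref{euler}), while the resolution minus a collar neighborhood of its spherical space form boundary deformation retracts onto the exceptional divisor and has positive Euler characteristic — already the $A_1$ (Eguchi--Hanson) resolution has $\e=2$. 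So there is no $F$-structure collapsing the exceptional divisor, and the plumbings cannot be disposed of by the $F$-structure/collapsing mechanism that worked for the elliptic case.

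The paper's way around this is precisely the reason it needs the Ricci-flat ALE pieces and not just the topology of the resolution: one places Kronheimer's Ricci-flat ALE Einstein metric $h_i$ on each resolution $X_i$, rescales so the complement of a unit ball is $C^2$-close to the flat cone, and then applies the \emph{thick-thin argument in the proof of Theorem \ref{mainineq}} to that ball, producing a bounded-curvature metric with a product spherical boundary whose $\hat\epsilon$-thick volume is $\leq C\,\e(X_i)$. The quantitative bookkeeping is Chern--Gauss--Bonnet on both sides: $\int_{X_i}|\Rm_{h_i}|^2\leq C\e(X_i)$ controls the thick volume per piece, and $\sum_i \e(X_i) \leq C\e(S)$ (together with $K\leq C\e(S)$) controls the total. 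Without this Einstein/$\epsilon$-regularity input, the volume of the exceptional set is not controlled, and the reduction from $S_{\mathrm{can}}$ back to $S$ does not close.
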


\begin{proof}
The lower bound is always true by Lemma \ref{euler}. 

Before continuing, note that since surfaces with nonnegative Kodaira dimension have nonnegative Euler characteristic, it is enough to prove the theorem for surfaces that are minimal by Lemma \ref{connected sum}. The upper bound is proved in two parts, depending on the Kodaira dimension of $S$. 

If $S$ is a minimal surface of Kodaira dimension $0$ or $1$, then it is diffeomorphic to an elliptic surface. If it has vanishing Euler characteristic, \cite[Theorem 5]{LeBrun99} implies that $\minvol(S)=\essminvol(S)=0$. Otherwise, $S$ is the fiber sum of a zero Euler characteristic elliptic surface with some rational elliptic surfaces and elementary considerations about collapsing theory suffice to show the desired linear bound, see the proof of \cite[Theorem 4]{LeBrun99}.
 
If $S$ is a minimal surface of general type, then its Euler characteristic is strictly positive. Moreover there is an orbifold K\"{a}hler-Einstein metric $g'$ on an orbifold $S'$ obtained from $S$ by contracting finitely many $(-2)$-curves  to points inside $S$ \cite{Aubin76,Yau78,Kobayashi85,Tsuji88}. 
One recovers $S$ from $S'$ by gluing some Ricci-flat ALE spaces \cite{Kronheimer86} $$(X_1,h_1),..,(X_K,h_K),$$ to the finitely many orbifold singularities $\{p_1,...,p_K\}$ of $S'$ (see also proof of Theorem 4.3 in \cite{LeBrun01}). These spaces are asymptotically conical, with limiting flat cone $(\mathcal{C}_i, g_{\text{Eucl}})$ and conical section a quotient of the round unit $3$-sphere. By using the Chern-Gauss-Bonnet formula and the fact that the $L^2$-norm of the Riemann curvature tensor on the non-flat Ricci-flat ALE spaces $X_i$ are lower bounded by a universal positive constant, we have
\begin{equation} \label{fff}
\begin{split}
& K  \leq \sum_{i=1}^K \e(X_i) \leq C \e(S) \\
& \text{and }\e(S')\leq \e(S) + K \leq C \e(S)
\end{split}
\end{equation}
for some universal constant $C>0$.

 Let $\hat{\epsilon}>0$ be as in Proposition \ref{quantify}.  By Theorem \ref{gene}, there is a metric $\underline{\mathbf{g}}$ on $N:=S'\setminus \{p_1,...,p_K\}$ with $|\sec_{\underline{\mathbf{g}}}|\leq1$, which is a nice product metric near each $p_i$, and whose $\hat{\epsilon}$-thick part $N^{(\underline{\mathbf{g}})}_{> \hat{\epsilon}}$ has controlled volume:
\begin{equation}  \label{all}
\begin{split}
\Vol(N^{(\underline{\mathbf{g}})}_{> \hat{\epsilon}} ,\underline{\mathbf{g}}) & \leq C \e(S') \\
  & \leq C\e(S)
 \end{split}
\end{equation}
where the universal constant $C$ changes from line to line.

Next, to fix ideas, we can imagine by rescaling that for each $i$ there is $x_i\in X_i$ such that $X_i\setminus B_{h_i}(x_i,1)$ is $C^2$-close to $(\mathcal{C}_i \setminus B_{g_{\text{Eucl}}}(O,1),g_{\text{Eucl}})$ where $O$ is the tip of $\mathcal{C}_i$. By the proof of Theorem \ref{mainineq}, for each $i=1,...,K$, there is  similarly a metric $\underline{\mathbf{g}}_i$ on $B_{h_i}(x_i,1)\subset X_i$ with $|\sec_{\underline{\mathbf{g}}_i}|\leq 1$, which is a nice product metric near the boundary $\partial B_{h_i}(x_i,1)$ and whose $\hat{\epsilon}$-thick part $\mathbf{B}_{i,\geq \hat{\epsilon}}$ satisfies:
\begin{equation}  \label{bll}
\Vol(\mathbf{B}_{i,\geq \hat{\epsilon}},{\underline{\mathbf{g}}_i}) \leq C \e(X_i).
\end{equation}
By gluing the new metrics $(X_i,\underline{\mathbf{g}}_i)$ back to $(S',\underline{\mathbf{g}})$ along their boundaries, we get the desired metric $\underline{\mathbf{h}}$ on $S$, since by (\ref{fff}), (\ref{all}), (\ref{bll}):
\begin{itemize}
\item $|\sec_{\underline{\mathbf{h}}}|\leq 1$, 
\item $\Vol(S^{(\underline{\mathbf{h}})}_{> \hat{\epsilon}}, \underline{\mathbf{h}})\leq C \e(S)$.
\end{itemize}
This concludes the proof thanks to Proposition \ref{quantify}.

\end{proof}




\bibliographystyle{alpha}
\bibliography{nouvelle_biblio_21_01_02}

\end{document}